\newcommand{\equinf}[1]{\ \underset{#1\to+\infty}{\mathlarger{\mathlarger{{\sim}}}}\ }
\newcommand{\Nat}{\mathbb N}
\newcommand{\R}{\mathbb R}
\newcommand{\Esp}{\mathbb E}
\newcommand{\p}{\mathbb P}
\newcommand{\Einf}[1]{\Esp_\infty\left(#1\right)}
\newcommand{\Vinf}[1]{\Var_\infty\left(#1\right)}
\newcommand{\pinf}[1]{\p_\infty\!\left(#1\right)}
\newcommand{\linf}[1]{\underset{#1\to+\infty}\longrightarrow}
\newcommand{\1}[1]{\mathbf{1}\!_{\left\{#1\right\}}}
\newcommand{\EE}[1]{\Esp\left(#1\right)}
\newcommand{\Ek}[1]{\Esp_k\left(#1\right)}
\newcommand{\as}{\quad\mathrm{ a.s.}}
\newcommand{\eps}{\varepsilon}
\newcommand{\dif}{\mathrm{d}}
\newcommand{\Var}{\mathrm{Var}}
\newcommand{\me}{\medskip \noindent}
\newcommand{\bi}{\bigskip \noindent}
\def\be{\begin{eqnarray}}
\def\ee{\end{eqnarray}}
\def\ben{\begin{eqnarray*}}
\def\een{\end{eqnarray*}}
\newtheorem{prop}{Proposition}[section]
\newtheorem{defi}[prop]{Definition}
\newtheorem{lem}[prop]{Lemma}
\newtheorem{thm}[prop]{Theorem}
\newtheorem{rem}[prop]{Remark}
\newtheorem{cor}[prop]{Corollary}
\newcounter{example}
\title{Speed of coming down from infinity for birth and death processes}
\author{Vincent Bansaye\thanks{CMAP, Ecole Polytechnique, CNRS, route de
   Saclay, 91128 Palaiseau Cedex-France; E-mail: \href{mailto:vincent.bansaye@polytechnique.edu}{\texttt{vincent.bansaye@polytechnique.edu}}}, Sylvie M\'el\'eard\thanks{CMAP, Ecole Polytechnique, CNRS, route de
   Saclay, 91128 Palaiseau Cedex-France; E-mail: \href{mailto:sylvie.meleard@polytechnique.edu}{\texttt{sylvie.meleard@polytechnique.edu}}},
    Mathieu Richard\thanks{CMAP, Ecole Polytechnique, CNRS, route de
   Saclay, 91128 Palaiseau Cedex-France; E-mail: \href{mailto:mathieu.richard@cmap.polytechnique.fr}{\texttt{mathieu.richard@cmap.polytechnique.fr}}}}
\begin{document}
\maketitle

\begin{abstract}
We finely describe the speed of "coming down from infinity" for birth and death processes which eventually become extinct. 
Under general assumptions on the birth and death rates, we firstly determine the behavior of the successive hitting times of large integers.  
We put in light two different regimes depending on whether the mean time for the process  to go from  $n+1$ to $n$  is negligible or not compared to the mean time to reach $n$ from infinity. In the first regime, the coming down from infinity is very fast and the convergence is  weak.
In the second regime,  the coming down from infinity is gradual and
 a law of large numbers and a central limit theorem for the hitting times sequence hold. 
By an inversion procedure, we deduce   that the process is a.s. equivalent to  a non-increasing function when the time goes to zero.  Our results are illustrated by several examples including   applications to population dynamics and population genetics. The particular case where the death rate varies regularly is studied in details. 
\end{abstract}

\noindent\emph{Key words:} Birth and death processes, Coming down from infinity, Hitting times, Central limit theorem.

\medskip
\noindent\emph{MSC 2010:} 60J27, 60J75, 60F15,
60F05, 60F10, 92D25.


\section{Introduction and main results}

$\quad$
Our goal in this paper is to finely describe the "coming down from infinity" for a birth and death process. We are motivated by the study of population dynamics and population genetics models 
 with  initially large populations.  For this purpose,
 we first  decompose  the trajectory of the process with respect to the hitting times of large integers. We then study the  small time behavior of the continuous time process when  it comes down from infinity. 

\medskip
The population size is modeled by  a birth and death process   $(X(t),t\geq0)$  whose birth rate (resp. death rate) at state $n\in \mathbb{N}$ is $\lambda_n$ (resp. $\mu_n$).
In the whole paper, the  rates $\lambda_n$ are nonnegative  and the rates $\mu_n$ are positive  for $n\geq1$. Moreover, we assume that     $\mu_0=\lambda_0=0$ for practical purpose. The latter implies that $0$ is an absorbing state.
Such processes have been extensively studied from the pioneering works on extinction  \cite{KarlinMcGregor57} and quasi-stationary distribution  \cite{Doorn1991}.

\medskip
It is well known \cite{KarlinMcGregor57, Karlin1975}  that
\begin{equation}\label{condition_extinction} 
\sum_{i\geq1}\frac{1}{\lambda_i\pi_i}=\infty
\end{equation}
is a necessary and sufficient
condition for  almost sure absorption of the process at $0$, where
\begin{equation*}
\label{defi_pi}
\pi_{1}= \frac{1}{\mu_1} \quad \textrm{ and for } \quad n\geq 2, \  \pi_n=\frac{\lambda_1\cdots\lambda_{n-1}}{\mu_1\cdots\mu_n}.
\end{equation*}

\noindent Under Condition \eqref{condition_extinction}, we first  define  the law $\p_{\infty}$ of the process starting from infinity with values in $\mathbb{N} \cup \{\infty\}$ (see Lemma \ref{defi_P_infini}) as the limit of the laws $\p_{n}$ of the process issued from $n$. 

\medskip
When the limiting process is non-degenerate, it hits finite values in finite time with positive probability. This behavior is captured
by the notion of "coming down from infinity". A key role is played by the decreasing sequence $(T_{n})_{n\geq0}$ of hitting times defined as 
$$
T_{n}:= \inf\{t\geq 0, X(t)=n\}.
$$
\noindent As proved in \cite[p.384]{TaylorKarlin1998} and in \cite[Chap.3]{Allen2011}, 
\begin{equation}
\label{temps}
\Esp_1(T_0)=\sum_{i\geq1}\pi_i\quad \textrm{ and }\quad  \Esp_{n+1}(T_{n})= \frac{1}{\lambda_n\pi_n}\sum_{i\geq n+1}\pi_i= \sum_{i\geq n+1}\frac{\lambda_{n+1}\cdots\lambda_{i-1}}{\mu_{n+1}\cdots\mu_i},
 \ \textrm{ for } n\geq 1.
 \end{equation}
 Remark that in case of pure-death process,  the law of $T_{n}$ under $\mathbb{P}_{n+1}$ is exponential with parameter $\mu_{n+1}$  and   for $n\in \mathbb{N}$,
 $\,\Esp_{n+1}(T_{n})= \frac{1}{\mu_{n+1}}$.
 
 \me
 Characterizations of the coming down from infinity have been given in \cite{Anderson1991, Cattiaux2009}.  They rely on  the convergence of the mean time of absorption when the initial condition goes to infinity or equivalently to the convergence of the series
\begin{equation}
\label{cv_{S}}
S=\lim_{n\to \infty} \Esp_{n}(T_{0})=\sum_{i\geq1}\pi_i+\sum_{n\geq1}\frac{1}{\lambda_n\pi_n}\sum_{i\geq n+1}\pi_i =\sum_{n\geq0} \left(\sum_{i\geq n+1}\frac{\lambda_{n+1}\cdots\lambda_{i-1}}{\mu_{n+1}\cdots\mu_i} \right)< +\infty.
\end{equation}
 This is  equivalent to the existence and uniqueness of the quasi-stationary distribution  at $0$ (see \cite{Doorn1991}, \cite{Cattiaux2009}) and to the finiteness of some exponential moments of $T_{0}$. Furthermore, monotonicity properties allow us to show that  this is also equivalent to instantaneous almost-sure coming down from infinity (Proposition \ref{CDI}). 
 
 \medskip
 In the whole paper, we suppose that Assumption \eqref{condition_extinction} holds and from Section \ref{section_comportement_T_n} onward, we assume that \eqref{cv_{S}} is satisfied, that is, the process instantaneously comes down from infinity. It guarantees the finiteness of all moments of $T_{n}$ under $\p_{n+1}$ and under $\p_{\infty}$, for which we have an explicit expression (Proposition \ref{expect-taun}).  In Section \ref{compTn}, we put in light two different regimes for the asymptotic behavior of ${T_{n}/ \Esp_{\infty}(T_{n})}$, depending on whether the mean time to go from  $n+1$ to $n$ 
 is negligible or not compared to the mean time to reach $n$ from $\infty$.   In the first regime, the coming down from infinity is very fast and the limit is random. In the second one, the coming down  is gradual  and due to the accumulation of small independent contributions, which leads to a law of large numbers. More precisely, we assume that
 $$\frac{\Esp_{n+1}(T_n)}{\Esp_{\infty}(T_n)}= \frac{\frac{1}{\lambda_n\pi_n}\sum_{i\geq n+1}\pi_i}{ \sum_{j\geq n}\frac{1}{\lambda_j\pi_j}\sum_{i\geq j+1}\pi_i } \linf{n} \alpha.$$
 
\medskip \noindent In the first (fast) regime ($\alpha>0$) and under the additional assumption, which is stronger than \eqref{condition_extinction}: 
\begin{equation}
\label{condition_limite}
l:=\lim_{n\to+\infty}\frac{\lambda_n}{\mu_{n}}<1,
\end{equation}
we prove that $T_n/\Esp_{\infty}(T_n)$ converges in law to a non-degenerate random variable  whose distribution is characterized by $l$ and $\alpha$.

\medskip\noindent In the second (gradual) regime ($\alpha=0$), we  prove a weak law of large numbers  under the following second moment assumption
 \begin{equation}\label{condition_croissance}
\sup_{n\geq0} \frac{\Esp_{n+1}(T_{n}^2)}{(\Esp_{n+1}(T_{n}))^2}
 <+\infty.\end{equation}
 More precisely, we prove that the sequence $(T_n/\Esp_{\infty}(T_n))$ converges in probability to $1$. Under some additional variance assumptions, we also obtain  a central limit theorem.
 
\me  Thanks to \eqref{temps} 
and to  forthcoming  \eqref{mom2}, we note that both expectations in  \eqref{condition_croissance} can be written in terms of the birth and death rates.
Condition
  \eqref{condition_croissance} is fulfilled in many cases we have in mind. For instance,
it  holds for pure death processes.

\me  We will see in the next section some more tractable conditions ensuring \eqref{condition_extinction}, \eqref{cv_{S}},
 \eqref{condition_limite} and \eqref{condition_croissance}. 
 

\me In the second regime, under \eqref{condition_croissance}  and the following  additional  condition
\be
\label{hyp-carre} \displaystyle\sum_{n\geq0}\left(\frac{\Esp_{n+1}(T_{n})}{\Esp_\infty(T_n)}\right)^2<+\infty,\ee
which means that the convergence of $\Esp_{n+1}(T_n)/\Esp_{\infty}(T_n)$ to $0$ is fast enough, one also get a strong law of large numbers for $T_{n}/ \Einf{T_{n}}$.

\bi
 We then derive in Section \ref{section_comportement_X} the small time behavior of the process $X$. We prove that 
 \begin{equation*}
 \lim_{t\to0}\frac{X(t)}{v(t)}=1,
\end{equation*}
where $v$ is   the generalized inverse function of
$n\mapsto \Esp_{\infty}(T_n)= \sum_{j\geq n}\frac{1}{\lambda_j\pi_j}\sum_{i\geq j+1}\pi_i \,$:
\begin{equation*}
v(t)=\inf\{n\geq0;\ \Esp_{\infty}(T_n)\leq t\}.
\end{equation*}
The limit holds in probability in the first regime. Remark that in this fast case, $T_{n}/\Esp_{\infty}(T_n)$ converges in law (an not in probability) to a random variable but nevertheless, $X(t)$ behaves as $v(t)$ for $t$ small. That is due to the fact that $\Esp_{\infty}(T_{[nx]})$ is negligible with respect to $\Esp_{\infty}(T_n)$ for any $x>1$ and for large $n$. In the second regime, one needs some additional assumptions and  almost sure convergence can be obtained. The proof relies on two ingredients: the short time behavior of the non-increasing process equal to $n$ on $[T_{n}, T_{n-1}[$ and  the 
control of the height of the excursion of the  process $X$ during the time interval $[T_n,T_{n-1})$. Technical assumptions are required in 
the second regime to estimate the variations of $\Esp_{\infty}(T_n)$ and  to deduce the behavior of $X$ from that of $(T_{n})_{n}$, by a non trivial inversion procedure. Our  motivations
and applications from population dynamics and population genetics  meet these assumptions. Thus, our results cover  general birth and death models including many different ecological scenarios, as  competition models with polynomial death rates  \cite{Sibly22072005}  or Allee effect \cite{Kot2001}. Lambert \cite{Lambert2005} characterizes the distribution of the  absorption
 time for the logistic branching process  starting from infinity. Our work extends in different way    the case of  Kingman coalescent, for which speed of coming down from infinity has already been obtained by Aldous \cite{Aldous99}. More generally,   in the gradual regime, the behavior of the process coming down from in finity is similar to that of $\Lambda$ coalescent obtained in \cite{Berestiycki_Limic, LT}.  
 We also note that our approach relies on the decomposition of the trajectory of $X$ with respect to the reaching times of the successive integers and our results could be  extended to processes with several births. Another motivation for the results below is the study of birth and death process in random environment and in particular the study of the regulation of the population during
unfavorable periods. This latter is a work in progress. 

\me
The paper is organized as follows. In the next section, we work under the absorption assumption \eqref{condition_extinction} and prove the existence of the law of the process starting from infinity. Thus we gather general characterizations of the coming down from infinity and we show the equivalence to the a.s. instantaneous  coming down.
Focusing in Section \ref{section_comportement_T_n} on birth and death processes satisfying \eqref{condition_limite} or \eqref{condition_croissance}, 
we  describe the hitting times  of large integers.
 In Section \ref{section_comportement_X}, we  obtain a law of large numbers describing the small
time behavior of the process $X$. Examples and applications are provided all along the paper and illustrate the different regimes. The last Section \ref{ApplVR} focuses on regularly varing death rate and provides our main application to small time behavior for population dynamics and population genetics one-dimensional processes coming down from infinity.

\section{Preliminaries and  coming down from infinity}
\label{section_CDI}

\subsection{Preliminaries}
The first lemma allows us to define  the law of the process starting from infinity. It is  based on monotonicity arguments following Donnelly \cite{Donnelly91}. We set $\overline{\Nat}:=\{0,1,\ldots \} \cup\{\infty\}$ and for any $T>0$, we denote by $\mathbb{D}_{\overline{\Nat}}([0,T])$ the Skorohod space of c\`adl\`ag functions on $[0,T]$ with values in $\overline{\Nat}$.

\begin{lem}
\label{defi_P_infini}
Under \eqref{condition_extinction},  the sequence $(\p_{n})_{n}$ converges weakly in the space of  probability  measures on  $\mathbb{D}_{\overline{\Nat}}([0,T])$ to a probability measure $\p_{\infty}$.
\end{lem}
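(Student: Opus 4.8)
The plan is to realize all the processes $\p_n$ on a single probability space through a monotone coupling, to obtain the limit as a pointwise increasing limit of trajectories, and then to upgrade this pointwise convergence to convergence in the Skorohod space by exploiting the unit-jump structure. Concretely, I would attach to each state $j\ge1$ two independent Poisson processes, a birth clock of rate $\lambda_j$ and a death clock of rate $\mu_j$, all independent. Running the dynamics from the initial state $n$ and using, whenever the current state is $j$, the next points of the clocks attached to $j$, produces a process $X^{(n)}$ of law $\p_n$. Because a process sitting at state $j$ always uses the \emph{same} clocks regardless of its index $n$, and because the jumps are of unit size while two independent Poisson families have no common points a.s., the family $(X^{(n)})_n$ is ordered: $X^{(m)}(t)\le X^{(n)}(t)$ for all $t\ge0$ whenever $m\le n$. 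Moreover two such trajectories cannot cross without first being equal, and once they share a state they use identical clocks forever, hence \emph{coalesce}. This is exactly the monotonicity exploited by Donnelly \cite{Donnelly91}.

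\me Fix $T>0$. By monotonicity, for each $t$ the sequence $X^{(n)}(t)$ is nondecreasing in $n$, and I define $X^{(\infty)}(t):=\sup_n X^{(n)}(t)\in\overline{\Nat}$. For a finite level $k$ set $\sigma_k^{(n)}:=\inf\{t\ge0:\ X^{(n)}(t)\le k\}$; since the jumps are unit, for $n>k$ this is the first hitting time of $k$, and by the ordering $\sigma_k^{(n)}$ is nondecreasing in $n$ with limit $\sigma_k:=\sup_n\sigma_k^{(n)}$. One checks that $\{X^{(\infty)}(t)\le k\}=\bigcap_n\{X^{(n)}(t)\le k\}$, so $\sigma_k$ is precisely the first time the limit path drops to level $k$, and on $\{\sigma_k<\infty\}$ one has $X^{(\infty)}(\sigma_k)=k$. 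The absorption assumption \eqref{condition_extinction} guarantees $\sigma_0<\infty$ a.s., so $X^{(\infty)}$ reaches every finite level in finite time and takes the value $\infty$ only on an initial (possibly empty) time interval. Below a fixed level $k$ all the coupled paths live in the finite set $\{0,\dots,k\}$, are ordered and coalescing; combined with the fact that an integer-valued nondecreasing sequence stabilizes, this yields, on $\{\sigma_k<\infty\}$, a random index $N$ such that $X^{(n)}=X^{(\infty)}$ on $[\sigma_k,T]$ for all $n\ge N$. In particular $X^{(\infty)}$ is càdlàg on $(0,T]$, valued in $\overline{\Nat}$, and solves the birth and death dynamics as soon as it is finite.

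\me The remaining point, which I expect to be the main obstacle, is to turn this pointwise control into convergence in the Skorohod space $\mathbb{D}_{\overline{\Nat}}([0,T])$, the delicate region being where the limit path equals $\infty$. On any sub-interval $[\sigma_k,T]$ the paths $X^{(n)}$ eventually coincide with $X^{(\infty)}$, which gives (a fortiori Skorohod) convergence there. On the complementary initial piece the limit is at or near $\infty$; here I would use the compactness of $\overline{\Nat}$ together with the monotone blow-up $X^{(n)}\uparrow\infty$ to control the Skorohod time-change and the metric distance, letting $k\to\infty$ so that $\sigma_k$ decreases to the coming-down time. Assembling the two estimates gives $X^{(n)}\to X^{(\infty)}$ in $\mathbb{D}_{\overline{\Nat}}([0,T])$ almost surely, hence the weak convergence $\p_n\Rightarrow\p_\infty$ claimed in Lemma \ref{defi_P_infini}, with $\p_\infty$ the law of $X^{(\infty)}$. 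The careful bookkeeping of the Skorohod time-changes across the $\{X^{(\infty)}=\infty\}$ boundary, and the verification that no path oscillation survives in the limit, is where the real work lies.
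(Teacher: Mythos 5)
Your Harris-type coupling (shared birth and death clocks attached to the states) is valid, and the ordering and coalescence properties you derive from it are correct; this is the same monotonicity that the paper exploits, except that the paper delegates the analytic work to Donnelly's Theorem 1 \cite{Donnelly91} (tightness in $\mathbb{D}_{\overline{\Nat}}([0,T])$, after checking his assumptions (A1) and (A2)) and then identifies the limit by monotone convergence of $\p_n(X(t_1)\leq a_1,\dots,X(t_k)\leq a_k)$, whereas you attempt to reprove that theorem by hand. In doing so you make one outright false claim: Assumption \eqref{condition_extinction} does \emph{not} guarantee $\sigma_0<\infty$ a.s. It gives a.s. absorption of each $X^{(n)}$, but the supremum $\sigma_0=\sup_n\sigma_0^{(n)}$ may be infinite: for the pure death process with $\mu_n\equiv1$, Assumption \eqref{condition_extinction} holds, $\sigma_0^{(n)}$ is a sum of $n$ independent standard exponentials, and $X^{(\infty)}\equiv\infty$. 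Finiteness of the $\sigma_k$ is precisely the coming down from infinity, that is condition \eqref{cv_{S}}, which Lemma \ref{defi_P_infini} does not assume; the paper explicitly warns that the limiting process is not assumed to be finite at positive times. Your argument is structured around the coming-down regime and must also cover the case where $X^{(\infty)}$ equals $\infty$ on all, or a nondegenerate initial part, of $[0,T]$.

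More importantly, the step you yourself defer as ``where the real work lies'' --- the c\`adl\`ag property of $X^{(\infty)}$ and the upgrade from pointwise to Skorohod convergence across the region where the limit is infinite --- is exactly the content of the cited Donnelly theorem, i.e.\ the only nontrivial part of the lemma, and your outline names no mechanism for it; compactness of $\overline{\Nat}$ alone does not do it. What closes the argument is the a.s.\ finiteness of the number of Poisson clock points in compact time intervals: (a) every entry into level $k$ from above consumes a point of the death clock attached to $k+1$, so the hitting times $\sigma_k^{(n)}$ cannot strictly increase to a finite limit; hence they stabilize in $n$ whenever $\sigma_k<\infty$, and this is what actually justifies your claims $X^{(\infty)}(\sigma_k)=k$ and the coalescence of all large-$n$ paths on $[\sigma_k,T]$ (as written, ``one checks'' hides this point); (b) a similar upcrossing argument (each upcrossing of $[m+1,M]$ consumes a birth point at level $m+1$) shows that the limit path cannot oscillate between finite values and $\infty$, hence admits one-sided limits in $\overline{\Nat}$ and tends to $\infty$ as $t$ decreases to the coming-down time. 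With (a) and (b) one in fact obtains a.s.\ \emph{uniform} convergence of $X^{(n)}$ to $X^{(\infty)}$ in the compactified metric, so no Skorohod time-change bookkeeping is needed at all, and weak convergence follows. Without them, your proposal sets up the coupling correctly but stops short at the lemma's only real difficulty.
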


\noindent At this point, the limiting process is not assumed to be finite for positive times. 

\begin{proof}
We follow  the tightness argument given in the first part of  the proof of Theorem 1 by Donnelly in \cite{Donnelly91}. Indeed,
 no integer is an instantaneous state for the process ($\lambda_{n}, \mu_{n} <\infty$ for each $n\geq 0$) and the process
 is stochastically monotone with respect to the initial condition. It ensures that Assumption (A1)
of \cite{Donnelly91} holds.  In addition, Assumption \eqref{condition_extinction} ensures that the process almost surely does not explode and  (A2) of  \cite[Thm. 1]{Donnelly91} is also satisfied by  denoting $B_n^N$ the birth and death process
$X$ issued from $n$ and stopped in $N$.   

\noindent Then the tightness holds and  we identify the finite marginal distributions by noticing that for $k\geq1$, for $t_1,\dots,t_k\geq0$ and for $a_1,\dots, a_k\in\Nat$, the quantities $\p_{n}(X(t_{1})\leq a_{1}, \cdots, X(t_{k})\leq a_{k})$ are non-increasing with respect to $n\in\Nat$ (and thus converge). \end{proof}

\me We now focus on the time spent by the process $(X(t), t\geq 0)$  to go from level $n+1$ to level $n$. For $n\geq 0$, we introduce the function 
$$ G_n(a):=\Esp_{n+1}(\exp(-aT_n)), \  a>0.$$

\me
\begin{prop}
\label{expect-taun} Suppose that \eqref{condition_extinction} holds.
 For any $a>0$ and $n\geq1$, we have
\begin{equation}\label{recurrence_Gn}
G_{n}(a)=1+\frac{\mu_n+a}{\lambda_n}-\frac{\mu_n}{\lambda_n}\frac{1}{G_{n-1}(a)}.
\end{equation}
Moreover, for every $n\geq 0$,
\be
\label{mom2}
\Esp_{n+1}(T_{n}^2)=\frac{2}{\lambda_n\pi_n}\sum_{i\geq n} \lambda_i\pi_i\Esp_{i+1}(T_{i})^2, \quad 
\Esp_{n+1}(T_{n}^3)=\frac{6}{\lambda_n\pi_n}\sum_{i\geq n} \lambda_i\pi_i\Esp_{i+1}(T_{i})\Var_{i+1}(T_{i}).
\ee
\end{prop}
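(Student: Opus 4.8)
The plan is to obtain \eqref{recurrence_Gn} by a first-jump (excursion) decomposition together with the strong Markov property, and then to extract \eqref{mom2} by differentiating this identity at $a=0^+$ and telescoping the resulting moment recursions.

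\emph{Step 1: the recurrence for $G_n$.} First I would decompose the passage from $n$ to $n-1$ according to the first jump out of state $n$ (taking $\lambda_n>0$; the pure-death case $\lambda_n=0$ is a degenerate limit). Starting from $n$, the holding time is exponential with parameter $\lambda_n+\mu_n$, with Laplace transform $(\lambda_n+\mu_n)/(\lambda_n+\mu_n+a)$. With probability $\mu_n/(\lambda_n+\mu_n)$ the process steps down to $n-1$ and $T_{n-1}$ is exactly this holding time; with probability $\lambda_n/(\lambda_n+\mu_n)$ it steps up to $n+1$, and by the strong Markov property it must then perform an independent passage from $n+1$ to $n$ (Laplace transform $G_n$) followed by an independent passage from $n$ to $n-1$ (Laplace transform $G_{n-1}$). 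This gives
\begin{equation*}
G_{n-1}(a)=\frac{\mu_n+\lambda_n\,G_n(a)\,G_{n-1}(a)}{\lambda_n+\mu_n+a},
\end{equation*}
and solving for $G_n(a)$, which is legitimate since $G_{n-1}(a)>0$ (because $T_{n-1}<\infty$ a.s. under \eqref{condition_extinction}), reproduces exactly \eqref{recurrence_Gn}.

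\emph{Step 2: moment recursions.} Since the process is a.s. absorbed, $T_n<\infty$ a.s. and $G_n(0^+)=1$. Writing $m_n:=\Esp_{n+1}(T_n)$, $s_n:=\Esp_{n+1}(T_n^2)$, $c_n:=\Esp_{n+1}(T_n^3)$ and using $G_n'(0)=-m_n$, $G_n''(0)=s_n$, $G_n'''(0)=-c_n$, I would differentiate \eqref{recurrence_Gn} once, twice and three times at $a=0^+$ and insert $G_{n-1}(0)=1$. The first derivative reproduces \eqref{temps} (a useful consistency check), and the second and third give
\begin{equation*}
s_n=\frac{\mu_n}{\lambda_n}\bigl(s_{n-1}-2m_{n-1}^2\bigr),\qquad c_n=\frac{\mu_n}{\lambda_n}\bigl(c_{n-1}-6\,m_{n-1}\Var_n(T_{n-1})\bigr),
\end{equation*}
where I have used $\Var_n(T_{n-1})=s_{n-1}-m_{n-1}^2$ to repackage the third-derivative terms. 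The key algebraic fact for the next step is $\mu_n\pi_n=\lambda_{n-1}\pi_{n-1}$ for $n\geq2$, immediate from the definition of $\pi_n$; multiplying the two recursions by $\lambda_n\pi_n$ turns them into telescoping identities
\begin{equation*}
\lambda_n\pi_n s_n-\lambda_{n-1}\pi_{n-1}s_{n-1}=-2\lambda_{n-1}\pi_{n-1}m_{n-1}^2,\qquad \lambda_n\pi_n c_n-\lambda_{n-1}\pi_{n-1}c_{n-1}=-6\lambda_{n-1}\pi_{n-1}m_{n-1}\Var_n(T_{n-1}).
\end{equation*}
Summing from level $n$ to infinity then yields the closed forms in \eqref{mom2}, the boundary case $n=0$ following with the convention $\lambda_0\pi_0=1$ already implicit in \eqref{temps}.

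I expect the genuine obstacle to be the vanishing of the boundary terms $\lambda_N\pi_N\Esp_{N+1}(T_N^2)$ and $\lambda_N\pi_N\Esp_{N+1}(T_N^3)$ as $N\to+\infty$, which is what makes the telescoped sums equal the stated expressions. Each such sequence is nonnegative and nonincreasing, hence convergent, but the forward recursion only determines $s_n$ (resp. $c_n$) up to one free additive constant, equivalently up to the value of this limit, so I must pin that limit to $0$. The clean way is to invoke that the moments $\Esp_{n+1}(T_n^k)$ are the \emph{minimal} nonnegative solutions of the associated linear systems $\mathcal L f_k=-k\,f_{k-1}$ with $f_k$ vanishing at the target state, which forces the free constant to take its smallest admissible value $0$; alternatively, once finiteness of the moments is known, one bounds $\lambda_N\pi_N\Esp_{N+1}(T_N^2)$ by the convergent tail $2\sum_{k\geq N}\lambda_k\pi_k m_k^2$ and lets $N\to+\infty$.
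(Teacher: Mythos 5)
Your Step 1 is exactly the paper's argument: the same first-jump decomposition and strong Markov property give the same fixed-point equation for $G_{n-1}$, and solving it yields \eqref{recurrence_Gn}. Likewise, your moment recursions $s_n=\frac{\mu_n}{\lambda_n}(s_{n-1}-2m_{n-1}^2)$ and $c_n=\frac{\mu_n}{\lambda_n}(c_{n-1}-6m_{n-1}\Var_n(T_{n-1}))$ are precisely what the paper obtains by differentiating \eqref{recurrence_Gn} at $a=0$. The divergence is in the passage from these recursions to the closed forms \eqref{mom2}, and you have correctly located the crux: the telescoped recursion determines $\lambda_n\pi_n\Esp_{n+1}(T_n^2)$ only up to the additive constant $c=\lim_{N\to\infty}\lambda_N\pi_N\Esp_{N+1}(T_N^2)\geq0$, which must be shown to vanish.

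Neither of your two proposed resolutions closes this gap. The second is circular: bounding $\lambda_N\pi_N\Esp_{N+1}(T_N^2)$ by $2\sum_{k\geq N}\lambda_k\pi_k\Esp_{k+1}(T_k)^2$ is exactly (one inequality of) the identity \eqref{mom2} you are proving, and the recursion itself cannot supply it, since for every $c\geq0$ the sequence $\frac{1}{\lambda_i\pi_i}\left(c+2\sum_{j\geq i}\lambda_j\pi_j\Esp_{j+1}(T_j)^2\right)$ solves the same recursion; the telescoping only says the boundary terms converge to some $c\geq 0$, not that $c=0$. The first (minimality) route is a genuine idea, but as written it invokes an unproven statement: minimality of the hitting-time moment hierarchy on an infinite state space is standard only for first moments, and even granting it for $k=2,3$ you would still need to identify solutions of your scalar recursion in $s_i=\Esp_{i+1}(T_i^2)$ with nonnegative solutions of the full system in the starting point before minimality pins down $c=0$; carrying this out is comparable in effort to the paper's own argument. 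There is also a finiteness issue: under \eqref{condition_extinction} alone the moments may be infinite, and your algebra (solving the recursion, telescoping) presupposes finiteness that you never establish. The paper circumvents all of this by truncation and coupling: it first proves \eqref{mom2} by downward induction in the modified chain with $\lambda_N=0$ (where the boundary term is identically zero and all moments are finite), then uses that $X$ stopped at $T_N$ has the law of the truncated chain, that $T_n^N$ is stochastically monotone in $N$ and dominated by $T_n$, and concludes by monotone convergence as $N\to\infty$ -- an argument valid whether or not the moments are finite. To repair your proof, either carry out the minimality argument in full or substitute this truncation step for your boundary-term analysis.
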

\medskip \begin{proof}  
\noindent We denote by $\tau_n$ a random variable distributed as $T_n$ under $\p_{n+1}$ and consider the Laplace transform of $\tau_n$. Following  \cite[p. 264]{Anderson1991} and
by the Markov property, we have
$$\tau_{n-1}\overset{(\mathrm{d})}=\1{Y_n=-1}E_n+\1{Y_n=1}\left(E_n+\tau_{n}+\tau'_{n-1}\right)$$
where $Y_n$, $E_n$, $\tau'_{n-1}$ and $\tau_{n}$ are independent random variables,
 $E_n$ is an exponential  random variable with parameter $\lambda_n+\mu_n$ and $\tau'_{n-1}$ is distributed as $\tau_{n-1}$ and $\p(Y_n=1)=1-\p(Y_n=-1)=\lambda_n/(\lambda_n+\mu_n)$.
Hence, we get
$$G_{n-1}(a)=\frac{\lambda_n+\mu_n}{a+\lambda_n+\mu_n}\left(G_n(a)G_{n-1}(a)\frac{\lambda_n}{\lambda_n+\mu_n}+\frac{\mu_n}{\lambda_n+\mu_n}\right)$$
and \eqref{recurrence_Gn} follows. 

\medskip \noindent Differentiating \eqref{recurrence_Gn} twice at $a=0$, we get
\begin{equation*}\label{eq_1750}
\Esp_{n}(T_{n-1}^2)=\frac{\lambda_n}{\mu_n}\Esp_{n+1}(T_{n}^2)+2\Esp_{n}(T_{n-1})^2, \quad n\geq1.
 \end{equation*}
In the particular case when $\lambda_{N}=0$ for some $N>n$, 
a simple induction gives
\be
\label{caspart}
\Esp_{n+1}(T_{n}^2)=\frac{2}{\lambda_n\pi_n}\sum_{n\leq i \leq N-1} \lambda_i\pi_i\Esp_{i+1}(T_{i})^2 
\ee
and  \eqref{mom2} is proved. 
In the general case, let $N > n$. Thanks to Assumption \eqref{condition_extinction}, $T_0$ is  finite and the process a.s. does not explode in finite time for any initial condition. Then $T_n$ is  finite and
$T_N \rightarrow +\infty$ $\p_{n+1}$-a.s., where we use
 the convention $T_N=+\infty$ on the event $\{\forall t\geq 0 : X(t) \ne N\}$. The monotone convergence theorem yields 
$$\Esp_{n+1}(T_n^2 ;  T_n\leq T_N )\linf{N}
\Esp_{n+1}(T_n^2).$$ Let us  consider a birth and death process $X^N$ with birth and death rates $(\lambda^N_k,\mu^N_k : k\geq 0)$ such that $(\lambda^N_k,\mu^N_k)= ({\lambda}_k,{\mu}_k)$ for $k\ne N$ and ${\lambda}^N_N=0, {\mu}^N_N=\mu_N$. 

\noindent Since
$(X_t :  t\leq T_N)$ and $({X}^N_t : t\leq {T}^N_N)$ have the same distribution under $\p_{n+1}$, we get 
$$\Esp_{n+1}\left(T_n^2 ;  T_n\leq T_N \right)=\Esp_{n+1}\left((T_n^N)^2 ;  T_n^N\leq T_N^N \right),$$
which yields $$\Esp_{n+1}(T_n^2)=\lim_{N\rightarrow \infty} \Esp_{n+1}\left((T_n^N)^2 ;  T_n^N\leq T_N^N \right)\leq \lim_{N\rightarrow \infty} \Esp_{n+1}\left((T_n^N)^2 \right),$$
where the convergence of the last term is due to the stochastic monotonicity of $T_n^N$  with respect to $N$ under $\p_{n+1}$.
Since  $T_n^N$ is stochastically smaller than $T_n$ under $\p_{n+1}$, we have also
$$\Esp_{n+1}((T_n)^2)\geq \Esp_{n+1}((T_n^N)^2).$$
We deduce that
$$\Esp_{n+1}((T_n)^2)=\lim_{N\rightarrow \infty}\Esp_{n+1}((T_n^N)^2)=\lim_{N\rightarrow \infty}\frac{2}{\lambda_n\pi_n}\sum_{n\leq i \leq N-1} \lambda_i\pi_i\Esp_{i+1}(T_{i}^N)^2, $$
where the last identity comes from \eqref{caspart}.
Adding that $\Esp_{i+1}(T_{i}^N)$ is non-decreasing with respect to $N$ yields the expected expression for $\Esp_{n+1}((T_n)^2)$ by monotone convergence.

\medskip \noindent  The third moment is obtained similarly
 by differentiating \eqref{recurrence_Gn} three times, which gives the recurrence equation
 \begin{equation*}
\Esp_{n}(T_{n-1}^3)=\frac{\lambda_n}{\mu_n}\Esp_{n+1}(T_{n}^3)+6\Esp_{n}(T_{n-1})\Var_{n}(T_{n-1}), \quad n\geq1.
 \end{equation*}
The coupling argument we have used above allows us to conclude.
\end{proof}

\begin{rem} Using Proposition \ref{expect-taun}, \eqref{condition_croissance} writes
\be
\label{condition_croissancebis}
\sup_{n\geq 0} \sum_{i\geq n} \frac{\lambda_{i}\pi_{i}}{\lambda_{n}\pi_{n}}\left(\frac{\Esp_{i+1}(T_{i})}{\Esp_{n+1}(T_{n})}\right)^2<+\infty.
\ee
\end{rem}

\subsection{Instantaneous coming down from infinity}
\noindent  We now define a strong notion of coming down from infinity corresponding to the behavior of birth and death  processes under \eqref{condition_extinction} and \eqref{cv_{S}}:   the process comes down instantaneously almost surely.

\begin{defi}\label{defi_CDI} 
 The process $(X(t), t\geq 0)$ instantaneously comes down from infinity if for any $t>0$,
 \be
 \label{ICD}\lim_{m\rightarrow  \infty}\lim_{k\to+\infty}\p_k(T_m<t)=1.\ee
\end{defi}

\noindent Note that  \eqref{ICD} is equivalent to 
$$\p_{\infty}(\forall t>0, X(t)<+ \infty)=1.$$

\noindent Let us now show that \eqref{ICD} is satisfied under \eqref{condition_extinction} and \eqref{cv_{S}}. In fact we  give  several necessary and sufficient conditions for $(X(t), t\geq 0)$ to come down from  infinity. The first two ones are directly taken  from \cite{Cattiaux2009}. We add here an exponential moment criterion. 
We  also mention that  it is equivalent to the existence (cf.  \cite{Doorn1991}) and uniqueness (cf. \cite{Cattiaux2009}) of a quasi-stationary distribution for the process $X$.
\begin{prop}\label{CDI}
Under condition \eqref{condition_extinction}, the following assertions  are equivalent:
\begin{enumerate}[{\normalfont (i)}]
\item The process $(X(t), t\geq 0)$  instantaneously comes down from infinity.
\item  Assumption \eqref{cv_{S}} is satisfied: $S<+\infty$.
\item $\sup_{k\geq0}\Esp_k [T_0]<+\infty$.
\item For all $a>0$, there exists $k_a\in\Nat$ such that $\sup_{k\geq k_a}\Esp_k\left(\exp(aT_{k_a})\right)<+\infty$.
\end{enumerate}
\end{prop}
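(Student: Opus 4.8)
The plan is to reduce every assertion to the first‑passage decomposition of the trajectory and then to treat the four statements through the elementary quantities $e_n:=\Esp_{n+1}(T_n)$ and $G_n(-a)=\Esp_{n+1}(e^{aT_n})$. Since $X$ is skip‑free downwards, under $\p_k$ (with $k>m$) the process must visit every level between $k$ and $m$, so by the strong Markov property $T_m=\sum_{n=m}^{k-1}\tau_n$, where the $\tau_n$ are independent and $\tau_n$ is distributed as $T_n$ under $\p_{n+1}$. In particular $\Esp_k(T_0)=\sum_{n=0}^{k-1}e_n$ is non‑decreasing in $k$ with limit $S$ (by \eqref{temps} and \eqref{cv_{S}}), which gives at once the equivalence (ii)$\Leftrightarrow$(iii): $\sup_k\Esp_k(T_0)=S$. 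Moreover, by independence, $\Esp_k(e^{aT_{k_a}})=\prod_{n=k_a}^{k-1}G_n(-a)$, a product which is non‑decreasing in $k$; this is the identity on which the analysis of (iv) will rest.

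For the equivalence (i)$\Leftrightarrow$(ii) I would invoke \cite{Cattiaux2009}, as announced. One direction can also be seen directly: the weak convergence of Lemma \ref{defi_P_infini} together with the monotone representation above shows that $\lim_k\p_k(T_m<t)=\pinf{T_m<t}$, where under $\p_\infty$ one has $T_m=\sum_{n\ge m}\tau_n$ and $\Einf{T_m}=\sum_{n\ge m}e_n$. If $S<+\infty$ then $\Einf{T_m}\to0$ as $m\to\infty$, so Markov's inequality gives $\pinf{T_m\ge t}\le \Einf{T_m}/t\to0$, which is exactly \eqref{ICD} and hence (i). The implication (iv)$\Rightarrow$(iii) is then elementary: from $x\le a^{-1}e^{ax}$ we get $\Esp_k(T_{k_a})\le a^{-1}\Esp_k(e^{aT_{k_a}})$, so the uniform bound in (iv) yields $\sup_{k\ge k_a}\Esp_k(T_{k_a})<+\infty$, that is $\sum_{n\ge k_a}e_n<+\infty$; adding the finitely many remaining terms gives $S<+\infty$.

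The main work is (iii)$\Rightarrow$(iv), and this is where I expect the real difficulty. Fix $a>0$; by the product identity it suffices to find $k_a$ with $\prod_{n\ge k_a}G_n(-a)<+\infty$, equivalently $\sum_{n\ge k_a}f_n<+\infty$ where $f_n:=G_n(-a)-1\ge0$. Continuing \eqref{recurrence_Gn} to the argument $-a$ yields the Riccati‑type recursion $f_n=\frac{\mu_n}{\lambda_n}\frac{f_{n-1}}{1+f_{n-1}}-\frac{a}{\lambda_n}$, from which, using $\mu_n\pi_n=\lambda_{n-1}\pi_{n-1}$, one checks that $b_n:=\lambda_n\pi_n f_n$ satisfies $b_n=b_{n-1}/(1+f_{n-1})-a\pi_n$ and is therefore non‑increasing in $n$. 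Since $S<+\infty$ forces both $\sum_n e_n<+\infty$ and $\sum_n\pi_n<+\infty$, the plan is to combine the monotonicity of $b_n$ with these summabilities to control $\sum_{n\ge k_a}f_n$.

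The delicate point, and the main obstacle, is the finiteness of $G_n(-a)$ itself for large $n$: a priori the recursion may be solved by a finite value while the true Laplace transform at $-a$ is infinite, so one must first guarantee that $\tau_n$ has an exponential moment of order $a$ for $n\ge k_a$. I would secure this by approximating $X$ by the process killed at level $N$ (exactly as in the proof of Proposition \ref{expect-taun}), for which all transforms are finite and the recursion is rigorous, establishing the bound on $b_n$ uniformly in $N$ and then passing to the limit by monotone convergence. Concretely this amounts to showing that the exponential decay rate of $\tau_n$ exceeds $a$ once $n\ge k_a$. This is the analogue of the pure‑death case, where $G_n(-a)=\mu_{n+1}/(\mu_{n+1}-a)$ is finite as soon as $\mu_{n+1}>a$, which holds for $n$ large because $S<+\infty$ forces $\mu_n\to+\infty$; the same mechanism — fast dynamics at high levels under instantaneous coming down — drives the general case, and can alternatively be read off the spectral and quasi‑stationary description of \cite{Cattiaux2009}.
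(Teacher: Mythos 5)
Your peripheral implications are fine, and in places more self-contained than the paper's: the identity $\Esp_k(T_0)=\sum_{n=0}^{k-1}\Esp_{n+1}(T_n)$ gives (ii)$\Leftrightarrow$(iii) directly (the paper cites \cite{Cattiaux2009} for this), your Markov-inequality argument for (ii)$\Rightarrow$(i) is the same in spirit as the paper's monotone-convergence argument, and (iv)$\Rightarrow$(iii) via $x\le a^{-1}e^{ax}$ works. The genuine gap is in the implication that carries all the content, (iii)$\Rightarrow$(iv), where your plan breaks down at both of its steps. First, even granting finiteness of $f_n=G_n(-a)-1$, the monotonicity of $b_n=\lambda_n\pi_n f_n$ combined with the summability of $(\pi_n)$ and $(\Esp_{n+1}(T_n))$ does \emph{not} yield $\sum_n f_n<+\infty$. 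Telescoping your recursion $b_n=b_{n-1}/(1+f_{n-1})-a\pi_n$ gives only
\begin{equation*}
\sum_{n\ge k_a}\lambda_n\pi_n\frac{f_n^2}{1+f_n}\;\le\; b_{k_a},
\end{equation*}
while the pointwise bound $f_n\le b_{k_a}/(\lambda_n\pi_n)$ is useless precisely because of the standing assumption \eqref{condition_extinction}: $\sum_n 1/(\lambda_n\pi_n)=+\infty$. To conclude along your lines one would need $b_n\le C\,a\sum_{i>n}\pi_i$, i.e.\ $f_n\le C a\,\Esp_{n+1}(T_n)$, which is summable by \eqref{cv_{S}}; but the recursion does not give this, and the natural backward induction on the hypothesis $f_n\le Ca\,\Esp_{n+1}(T_n)$ fails to close when $\lambda_n\Esp_{n+1}(T_n)$ is unbounded, which is compatible with $S<+\infty$ and \eqref{condition_extinction}. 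Second, the finiteness issue you yourself flag as the main obstacle is treated circularly: for the process killed at $N$, the only uniform control your monotone quantity provides is $b_n^N\le b_{k_a}^N=\lambda_{k_a}\pi_{k_a}\bigl(\Esp_{k_a+1}(e^{aT^N_{k_a}})-1\bigr)$, and $\sup_N$ of the right-hand side is finite if and only if $\Esp_{k_a+1}(e^{aT_{k_a}})<+\infty$ --- which is exactly what is to be proved. Asserting that ``fast dynamics at high levels'' force the exponential decay rate of $\tau_n$ to exceed $a$, or deferring to the spectral/quasi-stationary theory of \cite{Cattiaux2009}, restates (iv) rather than proving it.

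The missing idea --- and what the paper actually does, adapting \cite[Prop 7.6]{Cattiaux2009} --- is to bypass the Laplace transforms of the individual $\tau_n$ and run a supermartingale argument with a \emph{bounded} Lyapunov function. Using (ii), choose $k_a$ so that $\sum_{n\ge k_a-1}\frac{1}{\lambda_n\pi_n}\sum_{i\ge n+1}\pi_i\le 1/a$, and let $J_a(m)$ be the corresponding partial sum for $m\ge k_a$ (and $0$ below). Then $LJ_a(m)=-1$ for $m\ge k_a$, so $aJ_a+LJ_a\le 0$ there, and optional stopping of the martingale $e^{at}J_a(X(t))-\int_0^te^{au}\left(aJ_a+LJ_a\right)(X(u))\,\dif u$ at $t\wedge T_{k_a}$ gives
\begin{equation*}
\Ek{e^{a(t\wedge T_{k_a})}}\;\le\;\frac{J_a(k)}{J_a(k_a)}\;\le\;\frac{1}{a\,J_a(k_a)}\qquad\text{uniformly in } k\ge k_a,
\end{equation*}
with no a priori integrability needed since $J_a$ is bounded and the stopped exponential is bounded for each fixed $t$; monotone convergence in $t$ then yields (iv). If you wish to keep your Riccati-recursion framework you must either reproduce a bound of this type or find a genuinely new mechanism to propagate summable bounds on $f_n$; as written, the central implication is unproved.
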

\me

 \me
Proposition \ref{CDI}  implies in particular that  under \eqref{condition_extinction} and \eqref{cv_{S}}, the moments of $T_n$ under $\p_n$ and $\p_{\infty}$ are finite. Moreover  their explicit expression can be derived from Proposition \ref{expect-taun} and will be useful in the rest of the paper.

 \medskip \noindent 
\begin{rem}
It is proved  in the next Lemma \ref{lemme_moments_tau_n} (i), that the equivalence between Assertion (ii), and then (i), (iii), (iv), is equivalent to the convergence of the series $\sum_{i\geq1} 1/\mu_i$, as soon as forthcoming Assumption \eqref{condition_croissance1} is satisfied. 
\end{rem}

\me
\begin{proof}[Proof of Proposition \ref{CDI}]
Assertion (i) implies (ii), and (ii) and (iii) are equivalent according to \cite[Prop 7.10]{Cattiaux2009}.  We now prove that
(iv) is equivalent to (ii) and that (ii) implies (i).

\smallskip \noindent First, we check that (iv) implies that $X$ comes down from infinity, which means that $+\infty$ is an entrance boundary. Then it well known that (ii) holds (see Section 8.1 in  \cite{Anderson1991} or Proposition 7.10 in \cite{Cattiaux2009}). Indeed, taking $a=1$ in (iv), we have $M:=\sup_{k\geq k_1}\Esp_k\left(\exp(T_{k_1})\right)<+\infty$. Then, Markov inequality ensures that  for all  $k\geq k_1$ and $t\geq0$,
$\p_k(T_{k_1}<t)\geq 1-\exp(-t)M$. Choosing  $t$ large enough ensures that the process comes down from infinity.

\noindent We then prove that (ii) implies (iv)  by adapting the proof of \cite[Prop 7.6]{Cattiaux2009}  to the discrete setting.
We fix $a>0$ and using  $S<+\infty$, there exists $k_a>1$ such that
\begin{equation*}\label{defi_n_a}\sum_{n\geq k_a-1}\frac{1}{\lambda_n\pi_n}\sum_{i\geq n+1}\pi_i\leq\frac{1}{a}.\end{equation*}
We now define the Lyapounov function  $J_a$ as
$$J_a(m):=\left\{\begin{array}{cc}
\displaystyle\sum_{n=k_a-1}^{m-1}\frac{1}{\lambda_n\pi_n}\sum_{i\geq n+1}\pi_i& \textrm{if } m\geq k_a\,,\\
0&\textrm{ if }m<k_a\,.
\end{array}\right.$$
We notice that $J_{a}$ is  non-decreasing and bounded and we introduce  the infinitesimal generator $L$ of $X$,  defined by 
$$L(f)(n)=\left(f(n+1)-f(n)\right)\lambda_n+\left(f(n-1)-f(n)\right)\mu_n,$$
for any bounded function $f$  and any $n \geq 1$. Then, the process
 $$M_t:=e^{at}J_a(X(t))-\int_0^te^{au}\left(aJ_a(X(u))+LJ_a(X(u))\right)\dif u, \qquad (t\geq0)$$
is a martingale with respect to the natural filtration of $X$.
Adding that $LJ_a(m)=-1$ for any $m\geq k_a$ and that  $J_a(X(u))\leq J_a(\infty)\leq 1/a$ , we get  for all $k\geq k_a$ and $t\geq0$,
\begin{eqnarray*}
\Ek{e^{at\wedge T_{k_a}}J_a(X(t\wedge T_{k_a}))}&=& \Ek{\int_0^{t\wedge T_{k_a}}e^{au}\left(aJ_a(X(u))+LJ_a(X(u))\right)\dif u}+J_a(k)\\
&= &\Ek{\int_0^{t\wedge T_{k_a}}e^{au}\left(aJ_a(X(u))-1\right)\dif u}+J_a(k) \\
&\leq & J_a(k).
\end{eqnarray*}
Adding that  for any  $k\geq k_a$, $\p_k$-a.s. $J_a(X(t\wedge T_{k_a}))\geq J_a(k_a)$, we get  $\ \Ek{e^{at\wedge T_{k_a}}}\leq \frac{J_a(k)}{J_a(k_a)}$.
Then (iv) follows from the monotone convergence theorem and Assumption (ii).

\me 
It remains to show that (ii)  implies (i). 
On the one hand, according to \eqref{temps}, $\Esp_\infty(T_{n})=\sum_{i\geq n} \Esp_{i+1}(T_i)$ and Assumption (ii) entails that $\Esp_\infty(T_{n})$ vanishes as $n\to\infty$ as the rest of the finite series $S$.
On the other hand, under $\p_{\infty}$, the sequence $(T_{n})_{n\geq0}$ decreases to some random variable $\,T_{[0,\infty)}$. Then, from the monotone convergence theorem, $\Esp_\infty(T_{n})$ decreases to $\Esp_\infty(T_{[0,\infty)})$ and  $\Esp_\infty(T_{[0,\infty)})=0$. It ensures that $T_{[0,\infty)}=0$ $\p_{\infty}$ a.s. and $X$ instantaneously comes down from infinity. The proof is then complete.
\end{proof}
\subsection{More tractable conditions}
\label{tract}
Let us give some tractable conditions ensuring \eqref{condition_extinction}, \eqref{cv_{S}},
 \eqref{condition_limite} or  \eqref{condition_croissance}, which will be useful for examples and applications.

\begin{lem}
\begin{enumerate}[{\normalfont(i)}]
\item 
Under Assumption 
\begin{equation}\label{condition_croissance1}
\sup_{n,i\geq1}\frac{\mu_n}{\mu_{n+i}}<+\infty, \qquad \limsup_{n\rightarrow \infty} \frac{\lambda_n}{\mu_{n}}<1,
\end{equation}
\noindent Condition  \eqref{cv_{S}}  holds  if and only if 
\be\label{crit-CDI}\ \displaystyle\sum_{n\geq1}\frac{1}{\mu_n} <+\infty.\ee

\item Assuming that \be
\label{condtech}
 \sup_{n,i\geq 1} \frac{\mu_n}{\mu_{n+i}}<+\infty, \qquad \frac{\lambda_n}{\mu_n} \underset{n\rightarrow \infty}{\longrightarrow}  0, \qquad \sum_{n \geq 1} \frac{1}{\mu_n}<+\infty,
 \ee
 then  \eqref{condition_extinction}, \eqref{cv_{S}},
 \eqref{condition_limite} and \eqref{condition_croissance} are satisfied and 
$$\Esp_{n+1}(T_n^k)\equinf{n}\frac{k!}{\mu^k_{n+1}},\quad \hbox{ for }\ k=1,2,3.$$
\end{enumerate}\label{lemme_moments_tau_n}
\end{lem}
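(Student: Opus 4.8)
The plan is to reduce the whole lemma to the single asymptotic relation $\mu_{n+1}\Esp_{n+1}(T_n)\to1$, extracted from the series representation of $\Esp_{n+1}(T_n)$ in \eqref{temps}. Writing $r_k:=\lambda_k/\mu_k$ and reindexing $i=n+1+j$, one has
$$\mu_{n+1}\Esp_{n+1}(T_n)=\sum_{j\geq0}\,\prod_{\ell=1}^{j}\frac{\lambda_{n+\ell}}{\mu_{n+\ell+1}}=1+\sum_{j\geq1}\left(\prod_{k=1}^{j}r_{n+k}\right)\frac{\mu_{n+1}}{\mu_{n+j+1}},$$
the second equality being the crucial rearrangement (pairing each $\lambda_{n+k}$ with $\mu_{n+k}$, so that the remaining $\mu$'s telescope into $\mu_{n+1}/\mu_{n+j+1}$). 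Under \eqref{condition_croissance1}, set $C:=\sup_{n,i}\mu_n/\mu_{n+i}<+\infty$ and fix $\rho_0\in(\limsup_n r_n,1)$, so that $r_m\leq\rho_0$ for all $m\geq N_0$. For $n\geq N_0$ every factor $r_{n+k}$ is $\leq\rho_0$ and $\mu_{n+1}/\mu_{n+j+1}\leq C$, so each summand is at most $C\rho_0^{j}$; this gives the two-sided bound $1\leq\mu_{n+1}\Esp_{n+1}(T_n)\leq 1+C\rho_0/(1-\rho_0)=:C'$ for all large $n$, and in particular shows each series converges.

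Part (i) is then immediate. Since $\Esp_{n+1}(T_n)\geq1/\mu_{n+1}$ for \emph{every} $n$ (the $j=0$ term), we have $S\geq\sum_n 1/\mu_{n+1}$; since $\Esp_{n+1}(T_n)\leq C'/\mu_{n+1}$ for $n\geq N_0$ and the finitely many remaining terms are finite, $S\leq\text{(finite)}+C'\sum_{n\geq N_0}1/\mu_{n+1}$. Comparing with \eqref{crit-CDI} yields the equivalence $S<+\infty\iff\sum_n1/\mu_n<+\infty$. The same geometric control also gives $1/(\lambda_i\pi_i)=\prod_{k\leq i}r_k^{-1}\to+\infty$, so \eqref{condition_extinction} holds automatically under \eqref{condition_croissance1}.

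For part (ii) the hypothesis $\lambda_n/\mu_n\to0$ gives $\limsup_n r_n=0$, so \eqref{condition_croissance1} holds, part (i) provides \eqref{cv_{S}} from $\sum1/\mu_n<+\infty$, and \eqref{condition_limite} is trivial with $l=0$. The $k=1$ equivalent now follows by refining the tail estimate: given large $n$, one has $r_m\leq\eps_n:=\sup_{m\geq n+1}r_m$ with $\eps_n\to0$, whence the sum over $j\geq1$ above is $\leq C\eps_n/(1-\eps_n)\to0$ and $\mu_{n+1}\Esp_{n+1}(T_n)\to1$. I then bootstrap through \eqref{mom2}. Writing $\Esp_{i+1}(T_i)=\theta_i/\mu_{i+1}$ with $\theta_i\to1$ (and $1\leq\theta_i\leq C'$), and using $\lambda_i\pi_i/(\lambda_n\pi_n)=\prod_{k=n+1}^{i}r_k$,
$$\frac{\mu_{n+1}^2}{2}\Esp_{n+1}(T_n^2)=\sum_{i\geq n}\left(\prod_{k=n+1}^{i}r_k\right)\frac{\mu_{n+1}^2}{\mu_{i+1}^2}\,\theta_i^2,$$
whose $i=n$ term is $\theta_n^2\to1$ while the tail is dominated by $C^2(\sup_i\theta_i^2)\sum_{m\geq1}\eps_n^{m}\to0$; hence $\Esp_{n+1}(T_n^2)\equinf{n}2/\mu_{n+1}^2$. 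Consequently $\Var_{i+1}(T_i)=\Esp_{i+1}(T_i^2)-\Esp_{i+1}(T_i)^2\sim1/\mu_{i+1}^2$, and inserting $\Esp_{i+1}(T_i)\Var_{i+1}(T_i)\sim1/\mu_{i+1}^3$ into the third-moment formula of \eqref{mom2} gives $\Esp_{n+1}(T_n^3)\equinf{n}6/\mu_{n+1}^3$ by the identical domination argument. Finally \eqref{condition_croissance} holds because $\Esp_{n+1}(T_n^2)/\Esp_{n+1}(T_n)^2\to2$, so the supremum is finite (the finitely many initial ratios being finite since all moments of $T_n$ are finite by Proposition \ref{CDI}).

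The only genuine work, and thus the main obstacle, is justifying the interchange of the equivalents with the infinite sums in \eqref{mom2}. This is handled uniformly by the geometric domination $\big(\prod_{k=n+1}^{i}r_k\big)\mu_{n+1}^2/\mu_{i+1}^2\leq C^2\eps_n^{\,i-n}$, valid once $n$ is large, which simultaneously makes the $i=n$ term dominant and renders every tail uniformly negligible; the two structural assumptions $\sup_{n,i}\mu_n/\mu_{n+i}<+\infty$ and $\lambda_n/\mu_n\to0$ are exactly what this estimate consumes.
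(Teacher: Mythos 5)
Your proof is correct and follows essentially the same route as the paper's: for part (i), the two-sided geometric bound on $\mu_{n+1}\Esp_{n+1}(T_n)$ obtained from $\limsup_n \lambda_n/\mu_n<1$ together with $\sup_{n,i}\mu_n/\mu_{n+i}<+\infty$, and for part (ii), the bootstrap $k=1\to k=2\to k=3$ through the moment formulas \eqref{mom2} of Proposition \ref{expect-taun}, with the tails made negligible by the same geometric domination once $\lambda_n/\mu_n\to0$. Your write-up is in fact somewhat more explicit than the paper's terse version (e.g., you derive \eqref{condition_croissance} from the moment asymptotics instead of asserting it), but the underlying argument is identical.
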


\me Criterion \eqref{crit-CDI} can be seen as the discrete counterpart of the criterion in \cite[p.1953]{Cattiaux2009} stating that the Feller diffusion process $Z$  defined by  $\dif Z_t=\sqrt{\gamma Z_t}dB_t+ Z_{t}(r- f(Z_t))\dif t$  (for a suitable    function $f$ and $r>0$), comes down from infinity if and only if $\int_1^\infty\frac{\dif x}{xf(x)}<+\infty$.

\me
\begin{proof}
%
  We begin with the proof of point (i).  Coming back to  \eqref{temps}, 
the first term of the series giving $\Esp_{n+1}(T_{n})$ is $1/\mu_{n+1}$, hence  $\Esp_{n+1}(T_{n})\geq 1/\mu_{n+1}$.  Moreover, using the second part of Assumption \eqref{condition_croissance1}, there is $l'<1$ such that for $n$ large enough, 
$\lambda_n / \mu_n\in [0,l')$ and then for $n$ large enough
$$\frac{1}{\mu_{n+1}}\leq \Esp_{n+1}(T_{n})\leq\sum_{j\geq1}l'^{j-1}\frac{1}{\mu_{n+j}}\leq\frac{1}{1-l'}\frac{1}{\mu_{n+1}}\sup_{n\geq 1,k\geq 0} \frac{\mu_n}{\mu_{n+k}}.$$
Then the first part of Assumption \eqref{condition_croissance1} allows to get  (i).

 \medskip\noindent
 Under the assumptions \eqref{condtech}, the properties \eqref{condition_extinction}, \eqref{condition_limite} and \eqref{condition_croissance} are obvious, whereas \eqref{cv_{S}} is a consequence of point (i) of the lemma.
 
\me  To get the asymptotic behavior of the moments of $T_n$, we use the expression of $\Esp_{n+1}(T_n^k)$ provided in Proposition \ref{expect-taun} and the fact that   $\lambda_n/\mu_n$ goes to $0$. Then, for $k\in\{1,2,3\}$, by induction we can write
$$\Esp_{n+1}(T_n^k)=k!b_{n,k}(1+A_{n,k}),$$
where $b_{n,1}=1/\mu_{n+1}$, $b_{n,2}=(\Esp_{n+1}(T_n))^2$ and $b_{n,3}=\Esp_{n+1}(T_n)\Var_{n+1}(T_n)$
and $A_{n,k}\rightarrow 0$ as $n\rightarrow \infty$, which will complete the proof.
\noindent 
Indeed, for $k=1$, we know from \eqref{temps} that
 $\, \Esp_{n+1}(T_{n}) =\frac{1}{\mu_{n+1}} \left[1+\sum_{i\geq n+2}\frac{\lambda_{n+1}\cdots\lambda_{i-1}}{\mu_{n+2}\cdots\mu_i}\right].$
Moreover for every $l' \in(0,1)$ and  $n$ large enough, we have  $\lambda_n/\mu_n <l'$ and
 $$\frac{\lambda_{n+1}\cdots\lambda_{i-1}}{\mu_{n+2}\cdots\mu_i} \leq  l'^{i-(n+1)}.\sup_{n \geq 1,k\geq 0} \frac{\mu_n}{\mu_{n+k}},$$
 which ensures that $\Esp_{n+1}(T_n)\sim 1/\mu_{n+1}$. Combining this equivalence and  the expression of $\Esp_{n+1}(T_n^2)$ provided in Proposition \ref{expect-taun} yields similarly the asymptotic behavior of the second moment ($k=2$) and then the third moment $(k=3)$.
 \end{proof}

\begin{rem}Our original motivations for considering the coming down from infinity of birth and death processes
 are
  the regulation of large populations due to competition and the short time behavior of branching coalescing   models  (see e.g.  \cite{Ancgraph} for some  motivations  for ancestral graphs).
In this context, the birth rate is usually  linear, which corresponds to independent reproduction events,  or even zero for pure coalescing models. The death rate is often quadratic such as 
for Kingman coalescent and logistic competition, but polynomial death rate may be relevant, see  in particular  \cite{Sibly22072005} for a statistical study of the  death rate due to competition. Thus, we are interested in the particular case  $\lambda_n\leq Cn$ for some $C>0$ and
  $\mu_n=n^\rho\log^\gamma n$ with $\rho>1$. In this case, Assumption \eqref{condtech} is obviously satisfied and then \eqref{condition_extinction}, \eqref{cv_{S}},
 \eqref{condition_limite} and \eqref{condition_croissance} hold. Proposition \ref{CDI} ensures that the process comes down a.s. instantaneously from infinity and has bounded exponential moments.
We refer to Section \ref{ApplVR} for the fine description of this coming down from infinity. 

 \end{rem}

\section{Asymptotic behavior of \texorpdfstring{$T_n$}{Tn} \texorpdfstring{under $\p_\infty$}{}}\label{section_comportement_T_n}
\label{compTn}
From now on, we consider  sequences $(\lambda_n)_{n\geq0}$ and $(\mu_n)_{n\geq0}$  satisfying the hypotheses \eqref{condition_extinction}
and \eqref{cv_{S}}.  Thus,  according to Lemma \ref{defi_P_infini} and Proposition \ref{CDI},  $\p_\infty$ is well-defined and $X$ strongly comes down from infinity. Moreover $T_n<+\infty$ $\p_{\infty}$ a.s.
for any $n\geq 0$. In this section, we study the asymptotic
behavior of $T_n$ as $n\to+\infty$ under $\p_\infty$.
Let us recall that $\Esp_{\infty}(T_n)=\sum_{i\geq n} \Esp_{i+1}(T_i)$, so that
\eqref{temps} yields
$$\Esp_\infty(T_n)=\sum_{i\geq n}\frac{1}{\lambda_{i}\pi_{i}}\sum_{j\geq {i+1}}\pi_j.$$
Then, $S<+\infty$ ensures that $\Esp_\infty(T_n)$ decreases to $0$ as $n\to+\infty$.

\medskip \noindent
In the following two subsections, we compare $T_n$ to its mean $\Einf{T_n}$ as $n\to+\infty$. 
Two regimes appear depending on whether the ratio of  mean times $\Esp_{n+1}(T_{n})/\Esp_\infty (T_n)$ converges to a non-degenerate value or vanishes.
In the first case (fast regime - Theorem \ref{Behavior_Tnb}), the process comes down very quickly from infinity,  $T_n$ is then essentially  the time spent close to $n$ and renormalizing $T_n$ by its mean yields a random limit.
In the second case (gradual regime - Theorem \ref{Behavior_Tn}),  $T_n$ can be seen as the contribution of a large number of independent random variables and the limit  equals $1$. 

\noindent 
In both cases, the proofs rely on the fact that $T_n=\sum_{i\geq n} \tau_i$  $\p_{\infty}$-a.s., where for $n\geq 0$,  the  random variable $\tau_n$ is the time spent between $T_{n+1}$ and $T_n$ : 
$$\tau_n:=\inf\{t>T_{n+1};X(t)=n\}-T_{n+1}.$$
By the strong Markov property,  the  random variables $(\tau_i)_{i\geq 0}$ are  independent (under $\p_{\infty}$) and $\tau_i$ is distributed as $T_i$ under $\p_{i+1}$.
In the sequel of the section, we use for $n\geq 0$ the notation $$m_n:=\Esp(\tau_{n})=\Esp_{n+1}(T_n),\quad
r_n:=\frac{m_n}{\Esp_\infty(T_n)}=\frac{\Esp_{n+1}(T_{n})}{\Esp_\infty(T_n)}.$$
Examples which illustrate the two regimes
and the two convergences are provided in forthcoming Section \ref{exemples}, while an application to the regularly varying case  is  developed in  Section \ref{ApplVR}. 

\subsection{The fast regime}

\me
\begin{thm}\label{Behavior_Tnb}
We assume that \eqref{condition_extinction}, \eqref{cv_{S}} and \eqref{condition_limite} hold and
 $$\frac{\Esp_{n+1}(T_{n})}{\Esp_\infty(T_n)}\linf{n}\alpha,$$ with $\alpha\in(0,1]$.  Then,
$$ \qquad \qquad  \frac{T_n}{\Esp_\infty(T_n)}\overset{\mathrm{(d)}}{\underset{n\to+\infty}\longrightarrow}Z:=\sum_{k\geq0}\alpha\left(1-\alpha\right)^kZ_k, \qquad \qquad \qquad$$
where $(Z_k)_k$ is a sequence of i.i.d. random variables whose common Laplace transform $G(a):=\Esp_\infty\left(\exp(-aZ_0)\right)$ is the unique function $[0,+\infty)\to[0,1]$  that satisfies
\begin{equation}\label{equation_G}
\forall a>0, \quad  G(a)\left[l\big(1-G(a(1-\alpha))\big)+1+a(1-l(1-\alpha))\right]=1.
\end{equation}
\end{thm}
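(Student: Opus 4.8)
\emph{Reduction to a Laplace-transform statement.} Write $b_n:=\Esp_\infty(T_n)$, keep the notation $m_n=\Esp_{n+1}(T_n)$, $r_n=m_n/b_n$, and set
\[
\phi_n(a):=G_n(a/m_n)=\Esp\!\left(\exp(-a\,\tau_n/m_n)\right),
\]
the Laplace transform of the normalized increment $\tau_n/m_n$, which has mean $1$. Since $T_n=\sum_{i\ge n}\tau_i$ with the $\tau_i$ independent under $\p_\infty$, the transform of $T_n/b_n$ factorizes as
\[
\Psi_n(a):=\Einf{\exp(-a\,T_n/b_n)}=\prod_{i\ge n}G_i(a/b_n)=\prod_{k\ge0}\phi_{n+k}\!\left(a\,m_{n+k}/b_n\right).
\]
The whole statement will follow from: (a) $\phi_n\to G$ pointwise, where $G$ is the unique solution of \eqref{equation_G}; and (b) a passage to the limit in the product giving $\Psi_n(a)\to\prod_{k\ge0}G\!\left(a\,\alpha(1-\alpha)^k\right)$, which is precisely $\Einf{\exp(-aZ)}$. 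Before anything, I would record two elementary deterministic facts. A one-step decomposition of $\tau_{n-1}$ yields the exact identity $\mu_n m_{n-1}=1+\lambda_n m_n$; together with $r_n\to\alpha$ and $l_n:=\lambda_n/\mu_n\to l$ this gives $\rho_n:=m_{n-1}/m_n\to 1/(1-\alpha)$ and $x_n:=\lambda_n m_n\to x:=l(1-\alpha)/(1-l(1-\alpha))$. Likewise $m_{n+k}/b_n=r_{n+k}\prod_{j=0}^{k-1}(1-r_{n+j})\to\alpha(1-\alpha)^k$.

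\emph{The limiting map and uniqueness in \eqref{equation_G}.} Substituting $a\mapsto a/m_n$ in \eqref{recurrence_Gn} and solving for $\phi_{n-1}$ turns the recursion into the backward relation $\phi_{n-1}=\mathcal T_n\phi_n$, where
\[
\mathcal T_n H(a)=\frac{1}{1+l_n\bigl(1-H(a/\rho_n)\bigr)+l_n a/(\rho_n x_n)}.
\]
By the asymptotics above, $\mathcal T_n\to\mathcal T$ with $\mathcal T H(a)=\bigl(1+l(1-H(a(1-\alpha)))+a(1-l(1-\alpha))\bigr)^{-1}$, and a function $H\colon[0,\infty)\to[0,1]$ solves \eqref{equation_G} if and only if $H=\mathcal T H$ (any solution is automatically $(0,1]$-valued, with denominator $\ge1$). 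Since the two denominators are $\ge1$, one gets $\|\mathcal T H_1-\mathcal T H_2\|_\infty\le l\,\|H_1-H_2\|_\infty$, so $\mathcal T$ is a contraction (as $l<1$ by \eqref{condition_limite}); Banach's theorem provides the unique fixed point $G$, which is the uniqueness asserted in the theorem.

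\emph{Convergence $\phi_n\to G$ (the crux).} The forward recursion $\phi_{n-1}\mapsto\phi_n$ is expansive, so convergence is not immediate; the point is that the backward recursion $\phi_{n-1}=\mathcal T_n\phi_n$ is contractive. Put $d_n:=\|\phi_n-G\|_\infty\le1$ and $\eps_n:=\|\mathcal T_n G-\mathcal T G\|_\infty$. Writing $\phi_{n-1}-G=(\mathcal T_n\phi_n-\mathcal T_n G)+(\mathcal T_n G-\mathcal T G)$ gives $d_{n-1}\le l_n d_n+\eps_n$, i.e. $d_n\le l_{n+1}d_{n+1}+\eps_{n+1}$. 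I would first check $\eps_n\to0$ (uniform convergence of $\mathcal T_n G$ to $\mathcal T G$, by splitting $a\le A$ and $a>A$ and using that both transforms are $O(1/a)$). Iterating the inequality up to a far index $N$, fixing $l'\in(l,1)$ with $l_i\le l'$ eventually, and letting $N\to\infty$ yields $d_n\le\sum_{j>n}\bigl(\prod_{i=n+1}^{j-1}l_i\bigr)\eps_j\le(1-l')^{-1}\sup_{j>n}\eps_j\to0$. Hence $\phi_n\to G$ uniformly; as $G$ is continuous with $G(0)=1$, the continuity theorem shows $G$ is a genuine Laplace transform and $\tau_n/m_n$ converges in law to a variable $Z_0$ with $\Einf{\exp(-aZ_0)}=G(a)$. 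I expect this conversion of an expansive forward recursion into a convergent telescoping estimate through its contractive backward form to be the main obstacle.

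\emph{Transfer through the product.} In $\Psi_n(a)=\prod_{k\ge0}\phi_{n+k}(a\,m_{n+k}/b_n)$ each factor converges to $G(a\,\alpha(1-\alpha)^k)$ (uniform convergence of $\phi_m$, continuity of $G$, and $m_{n+k}/b_n\to\alpha(1-\alpha)^k$). The tail is controlled by $1-\phi_m(s)\le s$, which gives $\sum_{k\ge K}\bigl(1-\phi_{n+k}(a\,m_{n+k}/b_n)\bigr)\le a\,b_{n+K}/b_n=a\prod_{j=0}^{K-1}(1-r_{n+j})\le a(1-\alpha/2)^K$ for $n$ large; here $\alpha>0$ is essential, as it forces a geometric tail uniformly in $n$. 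Sending $n\to\infty$ and then $K\to\infty$ gives $\Psi_n(a)\to\prod_{k\ge0}G(a\,\alpha(1-\alpha)^k)=\Einf{\exp(-aZ)}$, hence $T_n/\Esp_\infty(T_n)$ converges in law to $Z$. The boundary case $\alpha=1$ (where $x=0$ and $\rho_n\to\infty$) is degenerate and treated directly: then $r_n\to1$ forces $T_n/b_n$ and $\tau_n/m_n$ to share their limit, only the $k=0$ term survives, and \eqref{equation_G} reduces to $G(a)=1/(1+a)$.
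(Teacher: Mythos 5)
Your proposal is correct and follows essentially the same route as the paper: the decomposition $T_n=\sum_{i\ge n}\tau_i$, the rescaled Laplace recursion $\phi_{n-1}=\mathcal T_n\phi_n$ (the paper's $F_{n-1}=H_n(F_n)$), uniqueness of $G$ via the contraction constant $l<1$, and transfer to the infinite product using the geometric decay of $m_{n+k}/\Esp_\infty(T_n)$ forced by $\alpha>0$. The only differences are bookkeeping: you bound the distance to the fixed point directly via $d_{n-1}\le l_n d_n+\eps_n$ (needing $\|\mathcal T_nG-\mathcal TG\|_\infty\to0$, which your $O(1/a)$ splitting does give), where the paper instead runs a Cauchy argument over the class $\mathcal C^1_1$ and compares with $H^{\circ k}(\mathbf 1)$; and you cut the product at a finite index with uniform tail bounds where the paper uses a summable dominating bound.
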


\me
\noindent We note that when $\alpha=1$, $Z=Z_0$ is an exponential random variable   with parameter $1$.

\bigskip \noindent 
\emph{Example}:  If $\mu_n=(n!)^\gamma$ with $\gamma>0$, $\Einf{T_n}\sim ((n+1)!)^{-\gamma}$. Hence, $\lim_{n\to+\infty} \Esp(\tau_n)/\Einf{T_n}=1$ and Theorem \ref{Behavior_Tnb} (i) yields
$$((n+1)!)^\gamma T_n\overset{\textrm{(d)}}{\linf{n}}E,$$ where $E$ is an exponential r.v. with parameter $1$. Another example is studied in forthcoming Section \ref{speedfast}. 

\vskip 1cm
\noindent  Before proving Theorem \ref{Behavior_Tnb}, let us show the following key lemma, which focuses on the asymptotic behavior of the distribution of   $(\tau_n)_{n}$.

\me
\begin{lem}\label{equivalents_alpha_non_nul}
 If $\lim_{n\to+\infty} r_n=\alpha\in(0,1]$, we have
$$\frac{\tau_n}{m_n}\overset{\mathrm{(d)}}{\linf{n}}\zeta,$$
where the Laplace transform of $\zeta$ is the unique solution of  \eqref{equation_G}.
\end{lem}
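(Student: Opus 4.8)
The plan is to pass to Laplace transforms and to show that the rescaled transforms converge to the unique solution $G$ of \eqref{equation_G}. Write $\psi_n(a):=\Esp_{n+1}(\exp(-a\tau_n/m_n))=G_n(a/m_n)$, the Laplace transform of $\tau_n/m_n$. Since $\tau_n/m_n\ge 0$ has mean $1$, the elementary bound $1-e^{-x}\le x$ gives $0\le 1-\psi_n(a)\le a$ for every $n$; hence, by Helly's selection theorem and the monotonicity of the $\psi_n$, any subsequence admits a pointwise convergent sub-subsequence, and the preceding bound forces the limit to be continuous at $0$ with value $1$, so it is the Laplace transform of a genuine probability law. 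It therefore suffices to prove that all such subsequential limits coincide with one and the same function $G$, which we identify as the unique $[0,+\infty)\to[0,1]$ solution of \eqref{equation_G}; then $\tau_n/m_n$ converges in law to the variable $\zeta$ with $\Esp_\infty(\exp(-a\zeta))=G(a)$.

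First I would record the scalings forced by the hypotheses. The one-jump decomposition of $\tau_{n-1}$ used in the proof of Proposition \ref{expect-taun} yields $\mu_n m_{n-1}=1+\lambda_n m_n$, while $r_n\to\alpha$ together with $\Esp_\infty(T_n)=(1-r_{n-1})\Esp_\infty(T_{n-1})$ gives $\beta_n:=m_{n-1}/m_n=\dfrac{r_{n-1}}{r_n(1-r_{n-1})}\to\dfrac1{1-\alpha}$. Combined with \eqref{condition_limite} ($\lambda_n/\mu_n\to l$) these imply $\lambda_n m_n=\dfrac{\lambda_n/\mu_n}{\beta_n-\lambda_n/\mu_n}\to\dfrac{l(1-\alpha)}{1-l(1-\alpha)}$ and $\mu_n m_{n-1}\to\bigl(1-l(1-\alpha)\bigr)^{-1}$. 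Solving \eqref{recurrence_Gn} for $G_{n-1}$ and substituting $a=b/m_{n-1}$ puts the recursion in the ``backward'' form
\[
\psi_{n-1}(b)=\left(\frac{\lambda_n}{\mu_n}+1+\frac{b}{\mu_n m_{n-1}}-\frac{\lambda_n}{\mu_n}\,\psi_n(b/\beta_n)\right)^{-1},
\]
in which every coefficient has a finite limit (dividing through by $\mu_n$ rather than $\lambda_n$ keeps this uniform over $l\in[0,1)$, including the boundary case $l=0$). Because the $\psi_n$ are monotone and any subsequential limit is continuous, $\psi_n(b/\beta_n)\to F(b(1-\alpha))$ whenever $\psi_n\to F$. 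Passing to the limit, any two consecutive subsequential limits $\widetilde F,F$ of $\psi_{n-1},\psi_n$ satisfy $\widetilde F=\Phi(F)$ with
\[
\Phi(F)(b):=\Bigl(1+\bigl(1-l(1-\alpha)\bigr)b+l\bigl(1-F(b(1-\alpha))\bigr)\Bigr)^{-1},
\]
whose fixed points are exactly the $[0,1]$-valued solutions of \eqref{equation_G}.

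The main obstacle is that a single extraction only ties the limit of $\psi_n$ to that of $\psi_{n-1}$, not the two to each other, so that $\widetilde F=\Phi(F)$ is one equation in two unknowns. I would overcome this by a diagonal extraction over all shifts: choose a subsequence $(n_j)$ along which $\psi_{n_j+k}\to F_k$ for every $k\in\mathbb Z$, each $F_k$ a genuine Laplace transform, so that $F_{k-1}=\Phi(F_k)$ for all $k$. The decisive estimate is that $\Phi$ is an $l$-contraction for the supremum norm: the denominators defining $\Phi(F_1),\Phi(F_2)$ are $\ge 1$, whence $\|\Phi(F_1)-\Phi(F_2)\|_\infty\le l\,\|F_1-F_2\|_\infty$, with $l<1$ being precisely \eqref{condition_limite}. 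Applying this to $\delta_k:=\|F_k-F_{k+1}\|_\infty$ gives $\delta_{k-1}\le l\,\delta_k$, hence $\delta_{m+k}\ge l^{-k}\delta_m$ for $k\ge0$; since $\delta_{m+k}\le 1$, any $\delta_m>0$ is impossible, so $(F_k)$ is constant. Its common value is a fixed point of $\Phi$, and the same contraction shows the fixed point is unique. Consequently every subsequential limit of $(\psi_n)$ equals this unique $G$, the whole sequence $\psi_n$ converges to $G$, and Lemma \ref{equivalents_alpha_non_nul} follows. The remaining verifications are routine: the locally uniform behaviour extracted above (monotone transforms with continuous limit) and the finiteness of all limiting coefficients, the boundary case $l=0$ degenerating $\Phi$ to the constant map onto the $\mathrm{Exp}(1)$ transform $(1+b)^{-1}$, in agreement with \eqref{equation_G}.
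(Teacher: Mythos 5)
Your proof is correct, and although it shares the paper's skeleton---the rescaled recursion $\psi_{n-1}=H_n(\psi_n)$ obtained from \eqref{recurrence_Gn}, the limiting operator (your $\Phi$, the paper's $H$), and the $l$-contraction estimate \eqref{contrac}---the convergence step is genuinely different. The paper argues forward and quantitatively: it writes $F_n=H_{n+1}\circ\cdots\circ H_{n+k}(F_{n+k})$, compares this composition with $H^{\circ k}(F_{n+k})$ via a convergence $H_n\to H$ that is uniform over the class $\mathcal{C}^1_1$ of transforms with derivative bounded by $1$ (this is where the mean-one normalization enters), then anchors to the constant function through $\|H^{\circ k}(F_{n+k})-H^{\circ k}(\mathbf{1})\|_\infty\leq l^k$, so that $(F_n(a))$ is Cauchy and its limit is a fixed point; that the limit is a bona fide Laplace transform is only checked at the end, via the quadratic equation for $F(0^+)$. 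You instead argue by compactness and rigidity: the mean-one bound $1-\psi_n(a)\leq a$ plus Helly's theorem makes every subsequential limit a genuine Laplace transform from the outset; a diagonal extraction over all shifts produces a bi-infinite backward orbit $F_{k-1}=\Phi(F_k)$, and the contraction forces $\delta_{m+k}\geq l^{-k}\delta_m$, incompatible with $\delta_{m+k}\leq1$ unless the orbit is constant, hence a fixed point of $\Phi$, i.e.\ the unique solution of \eqref{equation_G}; uniqueness of subsequential limits then upgrades to convergence of the whole sequence. Your route trades the paper's induction over $k$ and the equicontinuity class $\mathcal{C}^1_1$ for soft extraction machinery---monotonicity of Laplace transforms suffices to pass to the limit inside the composition, and this handles the boundary cases $l=0$ and $\alpha=1$ gracefully---while the paper's route is more quantitative, giving the explicit approximation of $F_n$ by $H^{\circ k}(\mathbf{1})$ with error of order $l^k$. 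Both arguments ultimately rest on the same contraction constant $l<1$ supplied by \eqref{condition_limite}.
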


\begin{proof} Recalling $\lambda_n/\mu_n \rightarrow l$ as  $n\rightarrow \infty$, let us first check that
\begin{equation}\label{equivalents_ratios_alpha}
\lim_{n\to+\infty}\frac{\Esp_\infty(T_{n+1})}{\Esp_\infty(T_n)}=\lim_{n\to+\infty}\frac{m_{n+1}}{m_n}=1-\alpha, \qquad
\lim_{n\to+\infty}\mu_nm_{n-1}=\frac{1}{1-l(1-\alpha)}.
\end{equation}
The first part of  \eqref{equivalents_ratios_alpha} comes from  $\Esp_\infty(T_{n+1})/\Esp_\infty(T_n)=1-r_n$ and $$\frac{m_{n+1}}{m_n}=\frac{r_{n+1}}{r_{n}}\frac{\Esp_\infty(T_{n+1})}{\Esp_\infty(T_n)}.$$
Moreover,  differentiating \eqref{recurrence_Gn} at $a=0$ yields
\begin{equation*}\label{recurrence_m_n}
1=\frac{\lambda_n}{\mu_n}\frac{m_{n}}{m_{n-1}}+\frac{1}{\mu_n m_{n-1}}
\end{equation*}
and using \eqref{condition_limite}  gives the second part of \eqref{equivalents_ratios_alpha}. 

\medskip
\noindent Let us  prove the uniqueness of  the function satisfying \eqref{equation_G}. 
 For any bounded function $g:[0,+\infty)\to[0,1]$, we define the function $H(g):[0,+\infty)\to[0,1]$ as
$$H(g):a\longmapsto \left[1+a(1-l(1-\alpha))+l(1-g(a(1-\alpha))\right]^{-1}.$$
For two functions $g_1$ and $g_2$ and any $a>0$, we have
{\setlength\arraycolsep{2pt}
\begin{eqnarray*}
\left|H(g_1)(a)-H(g_2)(a)\right|  &=& H(g_1)(a)H(g_2)(a) l\left|g_1(a(1-\alpha))-g_2(a(1-\alpha))\right|\\
\end{eqnarray*}}
and using that for any $a>0$, $H(g_1)(a)\leq 1,$
\begin{equation}\label{contrac} 
\|H(g_1)-H(g_2)\|_\infty\leq l\|g_1-g_2\|_\infty,
\end{equation}
which ensures the expected uniqueness since $l<1$. 

\medskip
\noindent We  now prove the convergence  in distribution of $\tau_n/m_n$ as $n\to+\infty$. For $n\geq0$, let $F_n:[0,+\infty)\longrightarrow[0,1]$ be  defined as
$$F_n(a):=\Esp(\exp(-a\tau_n/m_n))=\Esp_{n+1}(\exp(-aT_n/m_n))=G_{n}\left(\frac{a}{m_{n}}\right),\quad (a>0).$$
%
By  \eqref{recurrence_Gn},   for all  $a>0$ and $n\geq1$, we have
\begin{equation*}
G_{n-1}\left(\frac{a}{m_{n-1}}\right)=\left[1+\frac{a}{\mu_n m_{n-1}}+ \frac{\lambda_n}{\mu_n}\left(1-G_{n}\left(\frac{a}{m_{n-1}}\right)\right) \right]^{-1},
 \end{equation*}
which we rewrite as
\begin{equation}\label{recurrence_Gn2}
F_{n-1}=H_n(F_n),
\end{equation}
where for every function $f : [0,\infty) \rightarrow [0,1]$, $n\geq 1$  and $a\geq 0$,
$$H_n(f)(a)=\left[1+\frac{a}{\mu_n m_{n-1}}+ \frac{\lambda_n}{\mu_n}\left(1-f\left(a\frac{m_n}{m_{n-1}}\right)\right) \right]^{-1}.$$
Using \eqref{equivalents_ratios_alpha},  we have for every $a \geq 0$,
$$ \sup_{  f  \in \mathcal C^1_{1}}  | H_n(f)(a) -H(f)(a) | \linf{n} 0,$$
with $\mathcal C^1_{1} :=\{ f  \in \mathcal{C}^1([0,\infty),  [0,1]) :  \| f'\|_{\infty} \leq 1\}.$
Moreover, $F_n=H_{n+1}\circ  \ldots \circ H_{n+k}(F_{n+k})$ and by triangle inequality
\begin{multline*}\left| F_n(a)-H^{\circ k}(F_{n+k})(a)\right|\leq\Big| H_{n+1}(F_{n+1})(a)-H(F_{n+1})(a)\Big|\\
+\left|H(F_{n+1})(a)-H(H^{\circ k-1}(F_{n+k}))(a)\right|.
\end{multline*}
Adding that for every $n$, $F_n \in \mathcal C^1_{1}$ and recalling \eqref{contrac}, we get by induction over $k\geq 0$ that
$$F_n(a)-H^{\circ k}(F_{n+k})(a)\linf{n}0$$
for all $a\geq 0$ and $k\geq 0$.
We use  again \eqref{contrac} to obtain that
$$\|H^{\circ k}(F_{n+k})-H^{\circ k}(\mathbf{1})\|_\infty\leq l^k\|F_{n+k}-1\|_\infty\leq l^k.$$
Recalling that $l <1$, we can combine the two last displays and 
 for each $\epsilon>0$, we can find $k$ such that for $n$ large enough
$$| F_n(a)-H^{\circ k}(\mathbf{1})(a)| \leq 2\epsilon.$$
Thus, $(F_n(a) : n\geq 0)$ is a Cauchy sequence and $F_n(a)$ converges to $F(a)$ on $[0,\infty)$. The fact that $F_n  \in \mathcal C^1_{1}$ ensures that this convergence is uniform in  each compact set. Letting $n\rightarrow \infty$ in \eqref{recurrence_Gn2} then yields $F=H(F)$, which means that $F$ satisfies \eqref{equation_G}. 

\noindent Finally, we check   that $F$ is the Laplace transform of some random variable by proving that $F(0^+)=\lim_{a\to0}F(a)=1$. 
From \eqref{equation_G}, $F(0^+)$ is a solution of   
$l\,F(0^+)^2-(1+l)F(0^+)+1=0.$
If $l=0$, this equation has the unique root $1$. If $l>0$, the two roots are $1$ and $1/l$. But  $1/l>1$ and  obviously $F(0^+)\leq1$, so  that $F(0^+)=1$. 
That ends the proof of the weak convergence of $\tau_n/m_n$.
\end{proof}

\me
\bigskip \noindent We can now proceed with the following proof.

\begin{proof}[Proof of Theorem \ref{Behavior_Tnb}]
Let $Z=\sum_{k\geq0}\alpha(1-\alpha)^kZ_k$ be defined as in the statement of the theorem.
We need the following elementary result which can be proved thanks to a simple induction: for every  $a>0$ and for all complex numbers $z_1,z_2,\dots,z_n,u_1,\dots u_n$ with modulus less than $1$
 \begin{equation}
  \label{prod}
\left|\prod_{i=1}^nz_i-\prod_{i=1}^nu_i\right|\leq\sum_{i=1}^n\left|z_i-u_i\right|.
 \end{equation}
Then, recalling that $T_n=\sum_{k\geq n}\tau_k$ where the $\tau_k$'s are independent,
{\setlength\arraycolsep{2pt}
\begin{eqnarray}&&\left|\Esp_\infty\left(\exp\left(-a\frac{T_n}{\Esp_\infty(T_n)}\right)\right)-\Esp\left(\exp\left(-aZ\right)\right) \right| \nonumber \\
&&\qquad =\left|\prod_{k\geq n}\Esp\left(\exp\left(-a\frac{\tau_{k}}{ \Esp_\infty(T_n)}\right)\right)-\prod_{k\geq0}\Esp\left(\exp\left(-a\alpha(1-\alpha)^kZ_k\right)\right) \right|\nonumber\\
 &&\qquad \leq \sum_{k\geq0}\left|\Esp\left(\exp\left(-a\frac{\tau_{k+n}}{ \Esp_\infty(T_n)}\right)\right)-\Esp\left(\exp\left(-a\alpha(1-\alpha)^kZ_k\right)\right) \right|. \label{eq:1816}
  \end{eqnarray}}
From Lemma \ref{equivalents_alpha_non_nul}, we know that in
$\p_\infty$-distribution,
$\tau_n/ m_n$ converges to $\zeta$. 
Then, thanks to \eqref{equivalents_ratios_alpha} and the fact that $r_n\to\alpha$, we have for $k\geq0$
$$\frac{\tau_{k+n}}{\Esp_\infty(T_n)}=\frac{m_{n+k}}{\Esp_\infty(T_{n+k})}\prod_{i=1}^k\frac{\Esp_\infty[T_{n+i}]}{ \Esp_\infty[T_{n+i-1}]}\cdot\frac{\tau_{k+n}}{m_{n+k}}\,\overset{\textrm{(d)}}{\linf{n}}\,\alpha(1-\alpha)^k\mathcal \zeta.$$
The uniqueness in \eqref{equation_G} ensures that the variables $(Z_k)_k$ are distributed as $\zeta$. Then, with the last display, we get that all the terms of the sum in \eqref{eq:1816} vanish as $n\to+\infty$.
We proceed by bounded convergence. Using that
$1-\exp(-x)\leq x$ for any $x\geq0$,  we get for $k,n\geq0$
 {\setlength\arraycolsep{2pt}
\begin{eqnarray}
&&  \left|\Esp\left(\exp\left(-a\frac{\tau_{k+n}}{ \Esp_\infty(T_n)}\right)\right)-\Esp\left(\exp\left(-a\alpha(1-\alpha)^kZ_k\right)\right) \right|\nonumber \\
&& \qquad \leq
 \left|1-\Esp\left(\exp\left(-a\frac{\tau_{k+n}}{ \Esp_\infty(T_n)}\right)\right)\right|+\left|1-\Esp\left(\exp\left({-a\alpha(1-\alpha)^kZ_k}\right)\right) \right|\nonumber\\
 &&\qquad \leq a\frac{m_{k+n}}{ \Esp_\infty(T_n)}+a\alpha(1-\alpha)^k\Esp(Z_0).\label{1911}
\end{eqnarray}}
By differentiating \eqref{equation_G} at $0$, one finds
$\Esp_\infty[Z_0]=1$. Moreover,
$$\frac{m_{n+k}}{ \Esp_\infty(T_n)}=\frac{ \Esp_\infty(T_{n+1})}{ \Esp_\infty(T_n)}\frac{ \Esp_\infty(T_{n+2})}{ \Esp_\infty(T_{n+1})}\cdots\frac{\Esp_\infty(T_{n+k})}{ \Esp_\infty(T_{n+k-1})}\frac{m_{n+k}}{\Esp_\infty(T_{n+k})}.$$
Since $m_{k+n}/\Esp_\infty[T_{k+n}]\leq 1$ and $\Esp_\infty(T_{n+1})/ \Esp_\infty(T_n)\to1-\alpha<1$ as $n\to+\infty$, there exist  $n_0\in\Nat$, $\beta<1$ and $C>0$ such that
$m_{k+n}/\Esp_\infty(T_n)\leq C\beta^k$ for all $k\geq0, n\geq n_0.$
Thus, coming back to \eqref{1911}, for $n\geq n_0$, we have
$$\left|\Esp\left(\exp\left(-a\frac{\tau_{k+n}}{ \Esp_\infty(T_n)}\right)\right)-
\Esp\left(\exp\left(-a\alpha(1-\alpha)^kZ_k\right)\right)\right|\leq C\beta^k+ a\alpha(1-\alpha)^k.$$
Since the r.h.s. in the last display is summable, the proof is complete.
\end{proof}

\me
\subsection{The gradual regime}
\label{gradualspeed}
\bigskip \noindent 
We now  focus on the second regime and specify the fluctuations of $T_{n}$.  In this case, we will  obtain a weak and a strong law of large numbers.

\begin{thm}\label{Behavior_Tn}
We assume that  \eqref{condition_extinction},  \eqref{cv_{S}}  and \eqref{condition_croissance} hold.    
\begin{enumerate}[{\normalfont(i)}]  
\item If  $\ \displaystyle\frac{\Esp_{n+1}(T_n)}{\Esp_\infty(T_n)}\linf{n}  0$, then
\begin{equation*}\label{CV_Tn_proba}
\frac{T_n}{ \Esp_\infty(T_n)}\linf{n} 1 \qquad \text{ in } \p_\infty-\text{probability}.
\end{equation*}

\item If
 \begin{equation}\label{hypothese_variances}
  \lim_{n\to+\infty}\frac{\Var_{n+1}(T_n)}{\Vinf{T_n}}=0
 \end{equation}
and if
\begin{equation}\label{hypothese_moment_ordre3}
  \lim_{n\to+\infty}\Vinf{T_n}^{-3/2}\sum_{k\geq n}\Esp_{k+1}(\left|T_{k}-\Esp_{k+1}(T_k)\right|^3)=0,
 \end{equation}
we have
\begin{equation*}\label{CLT_T_n}
\frac{T_n-\Einf{T_n}}{\Var_\infty(T_n)^{1/2}}\underset{n\to+\infty}{\overset{\mathrm{(d)}}\longrightarrow} {\cal N},
\end{equation*}
where ${\cal N}$ follows a standard normal distribution.
\end{enumerate}
\end{thm}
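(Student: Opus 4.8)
The plan is to exploit the representation $T_n=\sum_{i\geq n}\tau_i$ established above, where under $\p_\infty$ the increments $(\tau_i)_{i\geq n}$ are independent with $\Esp(\tau_i)=m_i$ and $\Var(\tau_i)=\Var_{i+1}(T_i)$. Consequently $\Einf{T_n}=\sum_{i\geq n}m_i$ and $\Vinf{T_n}=\sum_{i\geq n}\Var_{i+1}(T_i)$, both finite by Proposition \ref{CDI}. Part (i) is then a weak law of large numbers for a tail sum of independent variables, and part (ii) a central limit theorem for the associated triangular array (rows indexed by $n$, entries by $i\geq n$). I would obtain both from second/third moment control together with the elementary product bound \eqref{prod}.

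For part (i) I would simply apply Chebyshev's inequality:
\begin{equation*}
\pinf{\left|\frac{T_n}{\Einf{T_n}}-1\right|>\eps}\leq \frac{\Vinf{T_n}}{\eps^2\,\Einf{T_n}^2}=\frac{\sum_{i\geq n}\Var_{i+1}(T_i)}{\eps^2\left(\sum_{i\geq n}m_i\right)^2}.
\end{equation*}
Assumption \eqref{condition_croissance} furnishes a constant $C$ with $\Var_{i+1}(T_i)\leq C m_i^2$, so it suffices to show $\big(\sum_{i\geq n}m_i^2\big)/\big(\sum_{i\geq n}m_i\big)^2\to0$. Writing $S_n=\Einf{T_n}=\sum_{i\geq n}m_i$, the hypothesis $r_n=m_n/S_n\to0$ together with the monotonicity $S_i\leq S_n$ for $i\geq n$ yields $\sup_{i\geq n}m_i\leq \eps S_n$ for $n$ large; hence $\sum_{i\geq n}m_i^2\leq(\sup_{i\geq n}m_i)\,S_n\leq \eps S_n^2$, which closes part (i).

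For part (ii) I would run the Lyapunov triangular-array CLT through characteristic functions, which dovetails with \eqref{prod}. Set $\sigma_n^2=\Vinf{T_n}$ and let $\phi_i$ be the characteristic function of the centered increment $\tau_i-m_i$, so that $(T_n-\Einf{T_n})/\sigma_n$ has characteristic function $\psi_n(t)=\prod_{i\geq n}\phi_i(t/\sigma_n)$. The Taylor bound $|\phi_i(s)-(1-\tfrac{1}{2}s^2\Var_{i+1}(T_i))|\leq \tfrac{1}{6}|s|^3\,\Esp_{i+1}(|T_i-\Esp_{i+1}(T_i)|^3)$, combined with the infinite version of \eqref{prod}, gives
\begin{equation*}
\left|\psi_n(t)-\prod_{i\geq n}\Big(1-\frac{t^2\,\Var_{i+1}(T_i)}{2\sigma_n^2}\Big)\right|\leq \frac{|t|^3}{6\,\sigma_n^3}\sum_{i\geq n}\Esp_{i+1}(|T_i-\Esp_{i+1}(T_i)|^3),
\end{equation*}
and the right-hand side tends to $0$ by \eqref{hypothese_moment_ordre3}. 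It then remains to check that the real product converges to $e^{-t^2/2}$: since \eqref{hypothese_variances} forces $\max_{i\geq n}\Var_{i+1}(T_i)/\sigma_n^2\to0$ (again by monotonicity of $\sigma_n^2$), each factor is eventually close to $1$, and taking logarithms with $\log(1-x)=-x+O(x^2)$ yields $\sum_{i\geq n}\log(1-\tfrac{t^2\Var_{i+1}(T_i)}{2\sigma_n^2})=-\tfrac{t^2}{2}+O(\max_{i\geq n}\Var_{i+1}(T_i)/\sigma_n^2)\to-\tfrac{t^2}{2}$, using $\sum_{i\geq n}\Var_{i+1}(T_i)=\sigma_n^2$. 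Lévy's continuity theorem then delivers convergence to $\mathcal N$.

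The main technical point to handle carefully is the passage from the \emph{finite} product bound \eqref{prod} to the infinite products here: I would apply \eqref{prod} to the partial products over $n\leq i\leq N$ and let $N\to+\infty$, which is legitimate because $T_n=\sum_{i\geq n}\tau_i$ converges a.s. (so $\psi_n$ is a genuine characteristic function equal to the infinite product) and because $\sum_{i\geq n}\Var_{i+1}(T_i)/\sigma_n^2=1$ guarantees convergence of the comparison product. I also note that \eqref{hypothese_variances} is in fact implied by \eqref{hypothese_moment_ordre3} via Lyapunov's inequality $\Var_{i+1}(T_i)^{3/2}\leq \Esp_{i+1}(|T_i-\Esp_{i+1}(T_i)|^3)$, so the negligibility of the maximal variance comes for free; the genuine content of part (ii) is the third-moment control \eqref{hypothese_moment_ordre3}.
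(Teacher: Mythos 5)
Your proposal is correct and follows essentially the same route as the paper: Chebyshev's inequality plus the bound $\Var(\tau_i)\leq C m_i^2$ from \eqref{condition_croissance} for part (i), and for part (ii) a Billingsley-style characteristic-function argument comparing $\prod_{i\geq n}\Esp\bigl(e^{it(\tau_i-m_i)/\Vinf{T_n}^{1/2}}\bigr)$ with the real product $\prod_{i\geq n}\bigl(1-\tfrac{t^2}{2}\Var_{i+1}(T_i)/\Vinf{T_n}\bigr)$ via \eqref{prod}, third-moment control, and the logarithmic expansion made valid by the uniform negligibility $\sup_{i\geq n}\Var_{i+1}(T_i)/\Vinf{T_n}\to 0$. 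Your closing observation that \eqref{hypothese_moment_ordre3} implies \eqref{hypothese_variances} by Lyapunov's inequality is a correct refinement that the paper does not state, but it does not alter the structure of the argument.
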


\bi Using Cauchy-Schwarz inequality, we note that under \eqref{condition_croissance}, Assumption   \eqref{hypothese_variances}   implies that $\Esp_{n+1}(T_n)/ \Esp_\infty(T_n)\rightarrow 0$, as $n$ tends to infinity. \bi 


\begin{proof}[Proof of Theorem \ref{Behavior_Tn}]{}

\me (i) - We suppose here that $r_n\rightarrow 0$.
Let $\eps>0$. Using Bienaym\'e-Tchebychev inequality and the independence of  the random variables $(\tau_n)_n$, we have
\begin{equation}\label{Bienayme_Tchebychev}
\p_\infty\left(\left|\frac{T_n}{\Esp_\infty(T_n)}-1\right|>\eps\right)\leq\frac{\Var_{\infty}(T_n)}{\eps^2 \Esp_\infty(T_n)^2}=\frac{\sum_{k\geq n}\Var(\tau_k)}{\eps^2 \Esp_\infty(T_n)^2}.
\end{equation}
As  $\Esp_\infty(T_{n+1})/m_n=1/r_{n}-1\rightarrow + \infty$ as $n\to+\infty$,
for all $A>0$, there exists an integer $n_0$ such that, for $n\geq n_0$, $\Esp_\infty(T_{n+1})\geq A\,m_n$ and
$$\Esp_\infty(T_n)^2=\Big(\sum_{k\geq n}m_k\Big)^2\geq2\sum_{k\geq n}m_k\sum_{l>k}m_l\geq2A\sum_{k\geq n}m_k^2,$$
since $\sum_{l>k}m_l = \Esp_\infty(T_{k+1}) \geq A m_{k}$.
Coming back to \eqref{Bienayme_Tchebychev}, for $n\geq n_0$, we have
\begin{equation}\label{eq:1553}
\p_\infty\left(\left|\frac{T_n}{\Esp_\infty(T_n)}-1\right|>\eps\right)\leq \frac{1}{2A\eps^2}\,\frac{\displaystyle\sum_{k\geq n}\Var(\tau_k)}{\displaystyle\sum_{k\geq n}m_k^2}.
\end{equation}
By assumption \eqref{condition_croissance}, there exists $C>0$ such that $\Var(\tau_n)\leq C m_n^2.$
Hence, the r.h.s. of \eqref{eq:1553} goes to $0$ as $A\to+\infty$  and the proof of the convergence in probability is complete.


\me
 (ii) - 
We follow classical  ideas for  the proof of  central limit theorem for partial sums of independent random variables (see  Theorem 27.2 in Billingsley \cite{Billingsley1986}).

\noindent 
We first note that $\Vinf{T_n} \geq \Vinf{T_{n+k}}$ for any $n,k\geq0$ since $T_{n}=\sum_{i\geq n} \tau_i$ and the r.v. $(\tau_i : i\geq 0)$ are independent. Using \eqref{hypothese_variances}, it ensures that
 \begin{equation}
  \label{uniformite}
\sup_{k\geq 0}\frac{\Var(\tau_{k+n})}{\Vinf{T_n}}\leq \sup_{k\geq 0}\frac{\Var(\tau_{k+n})}{\Vinf{T_{n+k}}}\linf{n}0.
 \end{equation}
This convergence being uniform with respect to $k\geq0$, we have
$$\sum_{k\geq0}\log \left(1-\frac{t^2}{2}\frac{\Var(\tau_{k+n})}{\Vinf{T_n}}\right)\equinf{n}-\frac{t^2}{2}\sum_{k\geq0}\frac{\Var(\tau_{k+n})}{\Vinf{T_n}}=-\frac{t^2}{2}.$$
Therefore,
\begin{equation}\label{tric}\,\exp(-t^2/2) = \lim_{n\to \infty} \prod_{k\geq 0}\left(1-\frac{t^2}{2}\frac{\Var(\tau_{k+n})}{\Vinf{T_n}}\right). \end{equation}

\noindent Let us now prove that
$\,\frac{T_n-\Einf{T_n}}{\Vinf{T_n}^{1/2}}\,$ converges in distribution as $n\to+\infty$ toward a standard normal random variable.
 By L\'evy's theorem and \eqref{tric}, it suffices to prove that for any fixed $t$, 
$$U_n=\Einf{\exp\left(it\frac{T_n-\Einf{T_n}}{\Vinf{T_n}^{1/2}}\right)}-\prod_{k\geq 0}\left(1-\frac{t^2}{2}\frac{\Var(\tau_{k+n})}{\Vinf{T_n}}\right)$$
vanishes as $n\to+\infty$. 
 First, since the $\tau_n$'s are independent,  for all $t\in\R, n\geq0$
 \begin{equation}\label{eq1711}
  |U_n|=
  \left|\prod_{k\geq0} \Esp\left( \exp \left(it\frac{\tau_{k+n}-\Esp(\tau_{k+n})}{ \Vinf{T_n}^{1/2}}\right)\right)-\prod_{k\geq 0}\left(1-\frac{t^2}{2}\frac{\Var(\tau_{k+n})}{\Vinf{T_n}}\right)\right|.
 \end{equation}
 According to  \eqref{uniformite}, for $n$ large enough and for any $k$,
 all the factors of the second product of \eqref{eq1711} are less than 1. Hence, thanks to \eqref{prod}, we have the 
 inequality
 \begin{equation}\label{eq1727}
  |U_n|\leq \sum_{k\geq0}\left|\Esp\left(\exp\left(it\frac{\tau_{k+n}-\Esp(\tau_{k+n})}{\Vinf{T_n}^{1/2}}\right)\right)-1+\frac{t^2}{2}\frac{\Var(\tau_{k+n})}{\Vinf{T_n}}\right|.
 \end{equation}
 According to equation (27.11) in \cite[p.369]{Billingsley1986}, for any centered random variable $\xi$ with a finite second moment, we have
 $\left|\EE{\exp(it\xi)}-1+\Var(\xi)t^2/2\right|\leq\EE{\min(|t\xi|^2,|t\xi|^3)}$ for $t\geq 0$.
Using this inequality with the random variables $[\tau_{n+k}-\Esp(\tau_{n+k})]/\Vinf{T_n}^{3/2}$, we obtain from \eqref{eq1727} that
\begin{equation*}\label{eq1745}
  |U_n|\leq |t|^3\sum_{k\geq0}\frac{\Esp\left(\left|\tau_{k+n}-\Esp(\tau_{k+n})\right|^3\right)}{\Vinf{T_n}^{3/2}}.
 \end{equation*}
 and using \eqref{hypothese_moment_ordre3}, $U_n$ goes to $0$ as $n\to+\infty$.
This completes the proof.
\end{proof}

\bi Let us now state a strong law of large numbers. 
\begin{thm}
\label{SLGN}
We assume that  \eqref{condition_extinction},  \eqref{cv_{S}}, \eqref{condition_croissance}  and \eqref{hyp-carre} hold. Then  the sequence $\,(\frac{T_n}{ \Esp_\infty(T_n)})_{n}$ converges to $1$,  $\p_\infty$-almost surely.
\end{thm}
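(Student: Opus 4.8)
The plan is to establish the almost sure statement by an Abel-summation (dual Kronecker) argument on the tail sums of the independent increments. Throughout write $a_i := \tau_i - m_i$ for the centered increments and $b_n := \Esp_\infty(T_n) = \sum_{i \geq n} m_i$, so that $b_n \downarrow 0$ by \eqref{cv_{S}} and, under $\p_\infty$,
$$T_n - \Esp_\infty(T_n) = \sum_{i \geq n} a_i =: W_n.$$
The target $T_n/\Esp_\infty(T_n) \to 1$ is exactly $W_n/b_n \to 0$ $\p_\infty$-a.s. The two hypotheses combine cleanly: as already noted in the proof of Theorem \ref{Behavior_Tn}(i), \eqref{condition_croissance} yields a constant $C>0$ with $\Var(\tau_i) \leq C m_i^2$, while \eqref{hyp-carre} reads $\sum_i r_i^2 < \infty$ with $r_i = m_i/b_i$; hence
$$\sum_{i \geq 0} \frac{\Var(a_i)}{b_i^2} = \sum_{i\geq0}\frac{\Var(\tau_i)}{b_i^2} \leq C \sum_{i\geq 0} r_i^2 < +\infty.$$

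First I would renormalise each increment by its own weight. The variables $a_i/b_i$ are independent, centered, and by the last display have summable variances, so Kolmogorov's convergence theorem ensures that $\sum_i a_i/b_i$ converges $\p_\infty$-a.s. Consequently the tail sums
$$R_n := \sum_{i \geq n} \frac{a_i}{b_i}$$
are a.s.\ finite and satisfy $R_n \to 0$ $\p_\infty$-a.s. as $n \to \infty$.

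Next I would recover $W_n$ from $(R_n)$ by summation by parts, the dual of Kronecker's lemma. Writing $a_i = b_i(R_i - R_{i+1})$ and summing between $n$ and $M$ gives
$$\sum_{i=n}^{M} a_i = b_n R_n - b_M R_{M+1} + \sum_{i=n+1}^{M} (b_i - b_{i-1}) R_i.$$
Letting $M \to \infty$, the boundary term vanishes because $b_M \to 0$ and $R_{M+1} \to 0$, and using $b_i - b_{i-1} = -m_{i-1}$ we obtain $\p_\infty$-a.s.
$$W_n = b_n R_n - \sum_{i \geq n+1} m_{i-1} R_i.$$
Dividing by $b_n$ and invoking the exact identity $\sum_{i \geq n+1} m_{i-1} = \sum_{j \geq n} m_j = b_n$, I fix $\eps>0$, pick a (random) $N$ with $|R_i| \leq \eps$ for all $i \geq N$, and bound, for $n \geq N$,
$$\left| \frac{W_n}{b_n} \right| \leq |R_n| + \frac{1}{b_n} \sum_{i \geq n+1} m_{i-1} |R_i| \leq |R_n| + \eps.$$
Since $R_n \to 0$, this gives $\limsup_n |W_n/b_n| \leq \eps$ a.s., and letting $\eps \downarrow 0$ yields $W_n/b_n \to 0$, that is $T_n/\Esp_\infty(T_n) \to 1$ $\p_\infty$-a.s.

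The only genuinely delicate point is the passage through the tail sums $R_n$: one must justify that $\sum_i a_i/b_i$ converges (so that $R_n$ and the summation by parts are meaningful) and control the boundary term $b_M R_{M+1}$ as $M \to \infty$. Everything else is the bookkeeping of Abel summation together with the exact cancellation $\sum_{i \geq n+1} m_{i-1} = b_n$, which is precisely what makes the normalisation by the \emph{decreasing} sequence $b_n$ work in this tail-sum setting.
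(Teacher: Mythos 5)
Your proof is correct, and it differs from the paper's in an instructive way. The reduction is identical: like the paper, you combine $\Var(\tau_n)\leq C\,\Esp_{n+1}(T_n)^2$ (from \eqref{condition_croissance}, as quoted in the proof of Theorem \ref{Behavior_Tn}(i)) with \eqref{hyp-carre} to obtain the summability condition $\sum_{n\geq0}\Var(\tau_n)/\Esp_\infty(T_n)^2<+\infty$, which is exactly \eqref{condition_serie}. The divergence is in what happens next: the paper closes the argument by citing Proposition 1 of \cite{Klesov83}, a law of large numbers for tail sums of independent random variables normalized by a decreasing sequence, whereas you prove that implication from scratch. Your two steps are sound: Kolmogorov's one-series theorem applies to the independent, centered, finite-variance variables $(\tau_i-m_i)/b_i$ (finiteness of second moments being guaranteed under \eqref{cv_{S}} by Proposition \ref{CDI}), so the tails $R_n$ vanish a.s.; and the Abel-summation step is legitimate because the boundary term $b_M R_{M+1}$ vanishes ($b_M\to0$ deterministically, $R_{M+1}\to0$ a.s.), the series $\sum_{i\geq n+1}m_{i-1}R_i$ converges absolutely a.s., and the exact identity $\sum_{i\geq n+1}m_{i-1}=b_n$ turns the correction term into a weighted average of the $|R_i|$, bounded by $\sup_{i>n}|R_i|$. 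In effect you have supplied a self-contained proof of the special case of Klesov's result that the paper uses as a black box: your route buys transparency (it shows precisely why normalization by the \emph{decreasing} sequence $b_n=\Esp_\infty(T_n)$ makes the tail-sum analogue of Kronecker's lemma work), while the paper's route buys brevity at the cost of an external citation.
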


\me
\begin{proof}[Proof of Theorem  \ref{SLGN}]
 We  prove the a.s. convergence when \eqref{hyp-carre} holds. According to the law of large numbers of Proposition 1 in \cite{Klesov83}, we just need to check that
\begin{equation}\label{condition_serie}
 \sum_{n\geq0}\frac{\Var(\tau_{n})}{\Esp_\infty(T_n)^2}<+\infty.
\end{equation}

\noindent Using  $\Var(\tau_{n})\leq \widehat C (\Esp_{n+1}(T_{n}))^2$ thanks to \eqref{condition_croissance} and Assumption \eqref{hyp-carre} ensure  \eqref{condition_serie} and the proof is complete.
\end{proof}

\me Let us illustrate this result with the example $\lambda_{n}=0$ and $\mu_{n}=n\log^\gamma n$ $(\gamma>1)$.  One can check that 
 $$\Esp_{n+1}(T_n) = \frac{1}{(n+1)\log^{\gamma} (n+1)}$$ and
$$\Einf{T_n}=\sum_{k\geq n+1}\frac{1}{k\log^\gamma k}\equinf{n}
\frac{1}{(\gamma-1)\log^{\gamma-1} n}.$$
Using Lemma  \ref{lemme_moments_tau_n} (ii), we know that  \eqref{condition_extinction},  \eqref{cv_{S}}, \eqref{condition_croissance} hold. It's also easy to check that \eqref{hyp-carre} is true. Then we can apply Theorem \ref{SLGN} to get that $\,T_{n}/\Einf{T_n}\,$ converges a.s. to $1$ as $n\to \infty$. 

\bi Other examples will be developed in Section 5.


\subsection{Comments and examples}
\label{exemples}

In the statement of the three previous theorems appear  different assumptions. Let us show here that  the choice of these assumptions is very subtile and illustrate our results.

\me
1- One can exhibit a situation of gradual regime where the assumptions of Theorem \ref{Behavior_Tn} are satisfied, while \eqref{condition_limite} fails. 

\me We assume that for each $n\geq 0$, $\mu_n=n^2$ and 
$$\lambda_n=\frac{n^2}{2} \quad \text{if } n \in \mathbb N-4\mathbb N; \qquad \lambda_n=2n^2 \quad \text{if } n \in 4\mathbb N.$$
Then $$\limsup_{n\rightarrow \infty} \lambda_n/\mu_n >1.$$
For each $n \geq 1$, 
$$\frac{1}{2^{1+n/2}} \leq \lambda_n\pi_n \leq \frac{1}{2^{n/2}}, \quad \frac{1}{n^22^{2+n/2}} \leq \pi_n \leq \frac{2}{n^22^{n/2}},$$ 
 so that   \eqref{condition_extinction} is satisfied
 and \eqref{temps} gives
$$\Esp_{n+1}(T_n)={\cal O}\left(\frac{1}{n^2}\right).$$ 
 Then \eqref{cv_{S}} also holds,
$\Esp_{i+1}(T_i)/\Esp_{n+1}(T_n)$ is bounded for $i\geq n$, and \eqref{condition_croissancebis} can be easily checked since $\frac{\lambda_{i}\pi_{i}}{\lambda_{n}\pi_{n}}\leq \frac{2}{2^{(i-n)/2}}$. Then \eqref{condition_croissance} is also fulfilled.
Thus the assumptions of Theorem \ref{Behavior_Tn} are satisfied.

\bi
2 - 
The assumptions for the weak  law of large numbers in Theorem \ref{Behavior_Tn} are not sufficient to obtain the strong law of large numbers (Theorem \ref{SLGN}). 
Let us consider a pure death process with  $\mu_n=\exp(n/\log n)\log n$ and prove that the convergence holds in probability but not almost surely.

\me
Here $l=0$ and 
$$\Esp(\tau_n)\underset{n\to\infty}= \frac{1}{\mu_{n+1}}, \qquad \Einf{T_n}= s_{n+1}=\sum_{k\geq n+1}\frac{1}{\mu_k}.$$ Moreover, as
$\mu_n$ is non-decreasing,
\begin{equation}
\int_n^\infty\frac{e^{-x/\log(x)}}{\log x}\dif x\leq  s_{n}\leq \int_n^\infty\frac{e^{-x/\log(x)}}{\log x}\dif x+\frac{e^{-n/\log(n)}}{\log n} \nonumber
 \end{equation}
and
$$\int_n^\infty\frac{e^{-x/\log(x)}}{\log x}\dif x\equinf{n}\int_n^\infty\left(\frac{1}{\log x}+\frac{1}{(\log x)^2}\right)e^{-x/\log(x)}\dif x=e^{-n/\log(n)}.$$
Combining the two last displays and recalling $r_n=\Esp(\tau_n)/\Einf{T_n}$, we have
$$ s_{n}\sim \exp(-n/\log n), \qquad
r_n\sim 1/\log n, \qquad r_n\to 0,$$
so that   $T_n/s_{n+1}$ goes to $1$ in probability. \\
 We prove now that 
 the almost sure convergence does not hold and proceed by contradiction. Thus,
we  assume now  that  $V_n:=T_n/ s_{n+1}$ does converge a.s. toward $1$. We have
\begin{equation*}\label{eq:1658}
V_{n+1}-V_n=V_{n+1}\left(1-\frac{ s_{n+2}}{ s_{n+1}}\right)-\frac{\tau_{n}}{ s_{n+1}}.
  \end{equation*}
By hypothesis, the left hand side of the latter a.s. vanishes as $n\to+\infty$. Moreover, simple computations lead to $s_{n+1}/s_{n}\to1$ and the first term in
the r.h.s. of the last display a.s. goes to $0$ since our assumption implies that a.s. $(V_n)_n$ is bounded.
Hence, putting all pieces together, the term $\tau_{n}/s_{n+1}$  has to go to $0$ a.s.\\
To get a contradiction thanks to Borel-Cantelli's lemma, it suffices to prove that for $\eps$ small enough,
$$\sum_{n\geq0}\p(\tau_{n} / s_{n+1}>\eps)=\infty,$$
recalling  that  the random variables $\tau_n$ are independent.
The law of   $\tau_{n}$  is    exponential  with parameter $\mu_{n+1}$.
Then,
$\p(\tau_{n}/s_{n+1}>\eps) = \exp(-\eps\mu_{n+1} s_{n+1})$. Since  $\mu_n\,s_{n}\sim\log n$ as $n\to+\infty$, there exists $C>0$ such that
$$\p(\tau_{n}/ s_{n+1}>\eps)\geq e^{-\eps C\log n}=\frac{1}{n^{C\eps}},$$
which completes the proof since  $\sum_{n\geq0}\p\left(\tau_{n+1}/ s_{n}>\eps\right)$ is infinite as soon as $\eps$ is small enough.

\bi
3 - 
In Theorems \ref{Behavior_Tnb} and \ref{Behavior_Tn}, we did not consider the case where $r_n=\Esp(\tau_n)/\Einf{T_n}$ does not converge. In such a case, one can only state  analogous results along the convergent
subsequences.
For instance, if $\mu_{2n}=\mu_{2n+1}=3^{2n}$, we have
$$r_{2n}\linf{n}\frac{4}{9} \quad \textrm{ and } \quad r_{2n+1}\linf{n}\frac{4}{5}.$$
Theorem \ref{Behavior_Tnb} then still holds but the subsequences $(T_{2n}/\Einf{T_{2n}})_n$ and $(T_{2n+1}/\Einf{T_{2n+1}})_n$  converge in distribution to different limits.

\medskip \noindent One can also find examples where $0=\liminf_n r_n<\limsup_n r_n$. Then, $(T_n)_n$ has two subsequences satisfying the two regimes of Theorems \ref{Behavior_Tnb} and \ref{Behavior_Tn}.

\section{Speed of coming down from infinity}\label{section_comportement_X}
\label{SCDI}

In this section, we use the asymptotic  behavior of $T_n/\Esp_{\infty}(T_n)$ obtained
in the previous Section to derive the short time behavior of $X(t)$.  We prove that 
$X$ behaves as the following non-increasing function that tends to infinity as $t\to0$
\begin{equation*}
\label{defv}
v(t):=\inf\{n\geq0;\ \Esp_{\infty}(T_n)\leq t\}.
\end{equation*}
The function $v$ is a c\`ad-l\`ag step function defined on $\mathbb{R}_{+}$, decreasing from $+\infty$ to $0$, it equals  $n$ between $  \Esp_{\infty}(T_n)$ and $ \Esp_{\infty}(T_{n-1})$, and  $0$ after $ \Esp_{\infty}(T_0)$.

\smallskip \noindent The short time behavior of $X$  relies on the inversion of the asymptotic behavior of $T_n$ (see forthcoming Lemma \ref{inv}) 
and the control of the excursion of $X$ between two successive stopping times
$[T_{n+1},T_n]$ (see forthcoming Lemma \ref{excursion}). The latter is true under  the assumption
\be \label{exc}\limsup_{n\rightarrow \infty}  \frac{\lambda_n}{\mu_n} <1. \ee
This assumption is already necessary for   Theorem \ref{Behavior_Tnb} but not for  Theorem \ref{Behavior_Tn}, as developed in Subsection 3.3 Example 1. 

\me The proof is organized as follows.  We introduce the a.s. non-increasing process $Y$ defined by
$$Y(t)=n \quad \text{if} \quad t\in [T_{n},T_{n-1})$$
In the next Section, we prove  that this (more regular) process comes down from infinity at speed $v(t)$ and
we compare the processes  $X(t)$  and $Y(t)$ as $t\rightarrow 0$ by the study of the height of the excursions  of the process $X$. 

\subsection{Height of the excursions  and  non-increasing process.}

\me
We first compare the processes  $X(t)$  and $Y(t)$   by estimating 
the number of birth events between the times $T_n$ and $T_{n-1}$:
$$H_n=\#\{  s  \in [T_n,T_{n-1}) : X(s)-X(s-)>0\}, \quad n\geq1.$$

\begin{lem} \label{excursion} (i) We have  
 \begin{equation}\label{encadrement_X_Y}
0\leq\frac{X(t)}{v(t)}-\frac{Y(t)}{v(t)}\leq \frac{H_{Y(t)}}{Y(t)} \frac{Y(t)}{v(t)}
\end{equation}

(ii) Under Assumption $(\ref{exc})$, 
 $\displaystyle\frac{H_n}{n}\rightarrow 0$ $\ \p_{\infty}$ a.s. 
\end{lem}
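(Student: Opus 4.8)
The plan is to handle the two parts by completely different means: part (i) is a deterministic, pathwise inequality, while part (ii) requires a moment analysis of the excursion heights together with a Borel--Cantelli argument.

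For part (i), I would fix $t$ and put $n=Y(t)$, so that $t\in[T_n,T_{n-1})$. Because $X$ is a nearest-neighbour process with $X(T_n)=n$ and $T_{n-1}$ is its first visit to $n-1$, one has $X(s)\geq n$ for every $s\in[T_n,T_{n-1})$; this gives $X(t)\geq Y(t)$ and hence the left-hand inequality. For the right-hand one I would write $X(t)$ as $n$ plus the number of births minus the number of deaths in $[T_n,t]$ and discard the nonnegative death count, getting $X(t)\leq n+H_n=Y(t)+H_{Y(t)}$. Dividing by $v(t)$ yields the stated bound, with no probabilistic input whatsoever.

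For part (ii) the plan is to bound the first two moments of $H_n$ uniformly in $n$. By the strong Markov property at the times $T_m$, the heights $(H_m)_m$ are independent under $\p_\infty$ and $H_n$ has the law of the number of up-jumps made by the jump chain started at $n$ before it first reaches $n-1$. Decomposing on the first jump — up with probability $p_n=\lambda_n/(\lambda_n+\mu_n)$, down with probability $q_n=\mu_n/(\lambda_n+\mu_n)$ — and noting that after an up-jump the chain must go from $n+1$ back to $n$ and then from $n$ to $n-1$, I obtain the branching-type identity in law $H_n\overset{(\mathrm d)}{=}\1{\mathrm{up}}\,(1+H'_{n+1}+H'_n)$, with $\1{\mathrm{up}}$, $H'_{n+1}$, $H'_n$ independent and $H'_{n+1},H'_n$ distributed as $H_{n+1},H_n$. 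Taking expectations gives the linear recursion $h_n=(\lambda_n/\mu_n)(1+h_{n+1})$ for $h_n:=\Esp_\infty(H_n)$, hence $h_n=\sum_{m\geq1}\lambda_n\cdots\lambda_{n+m-1}/(\mu_n\cdots\mu_{n+m-1})$. Under Assumption \eqref{exc} there are $l'<1$ and $N_0$ with $\lambda_k/\mu_k\leq l'$ for $k\geq N_0$, so $h_n\leq l'/(1-l')$ uniformly for $n\geq N_0$. Squaring the identity and taking expectations produces an analogous recursion $\Esp_\infty(H_n^2)=(\lambda_n/\mu_n)\big(C_n+\Esp_\infty(H_{n+1}^2)\big)$ with $C_n$ uniformly bounded (since the $h_n$ are), which iterates to $\sup_n\Esp_\infty(H_n^2)<+\infty$.

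The step I expect to be the main obstacle is that these recursions refer to the value ``at infinity'' and are a priori circular. I would resolve this exactly as in the proof of Proposition \ref{expect-taun}: introduce the truncated process $X^N$ with birth rate set to $0$ at level $N$, for which the excursion from $n$ to $n-1$ meets only finitely many states, so $h_n^N$ and $\Esp_\infty((H_n^N)^2)$ solve the recursions by a finite downward induction with boundary value $0$ at $N$; since $H_n^N\uparrow H_n$ as $N\to+\infty$ and the uniform bounds do not depend on $N$, they pass to the limit by monotone convergence. Finally, with $\sup_n\Esp_\infty(H_n^2)<+\infty$ and the $h_n$ bounded, Chebyshev's inequality gives $\pinf{H_n>\eps n}\leq C/(\eps n)^2$ for every $\eps>0$ and all large $n$; this is summable, so the first Borel--Cantelli lemma yields $\limsup_n H_n/n\leq\eps$ $\p_\infty$-a.s.\ for each $\eps$, whence $H_n/n\to0$ $\p_\infty$-a.s.
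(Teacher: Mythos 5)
Your proof is correct, and part (i) coincides with the paper's (which simply declares the pathwise inequality obvious). For part (ii) you share the paper's central idea — the first-jump decomposition $H_n\overset{(\mathrm d)}{=}\1{\mathrm{up}}(1+H'_{n+1}+H'_n)$ is exactly what underlies the paper's Laplace-transform recursion \eqref{recurrence_hat_Gn}, and your first- and second-moment recursions are the ones the paper gets by differentiating it at $a=0$ — but you solve the recursions differently. The paper seeds them from outside: under \eqref{exc} it stochastically dominates $H_n$ by the hitting time of $n-1$ for a simple random walk with downward drift ($p<1/2$), which gives $\sup_{n\geq n_0}\Einf{H_n^2}<+\infty$ at once, legitimizes the differentiation, and lets the recursion be used only to \emph{upgrade} the uniform bound to the vanishing bound $\Einf{H_n^2}\leq C\lambda_n/\mu_n$ of \eqref{majoration_moment2_Hn}; it then concludes from $\Einf{\sum_n(H_n/n)^2}<+\infty$. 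You instead justify iterating the recursions via truncation at level $N$ and monotone convergence (in the spirit of Proposition \ref{expect-taun}), obtaining only a uniform bound $\sup_n\Esp_\infty(H_n^2)\leq C$, and conclude by Chebyshev plus the first Borel--Cantelli lemma — which, note, makes your independence claim for $(H_m)_m$ superfluous. Both routes prove the lemma, and for Lemma \ref{excursion}(ii) a uniform second-moment bound is indeed enough. What the paper's domination argument buys is the sharper estimate $\Einf{H_n^2}\leq C\lambda_n/\mu_n$, which is reused later in the proof of Proposition \ref{corTLC}, where the hypothesis $\sum_n n^{-1}\lambda_n/\mu_n<+\infty$ yields $H_n/\sqrt n\to0$ a.s.; your uniform bound would not suffice there. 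What your route buys is self-containedness: no auxiliary random-walk comparison, everything flows from the recursion itself (at the cost of the truncation/coupling step, which you should spell out — the natural censoring coupling gives $H_n^N\uparrow H_n$ because the excursion of $X$ from $n$ to $n-1$ a.s. stays below some finite level).
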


\begin{proof}
(i) For any $t \in [T_n,T_{n-1})$, $Y(t)=n$ and
$ 0\leq X(t)-Y(t) \leq H_n$, so the first part is obvious.

\noindent (ii)  Let us first notice that $H_n$ equals the number of positive jumps between time $T_n$ and $T_{n-1}$ of a random walk whose transition probabilities are given by
$p_{i,i+1}=\lambda_i/(\lambda_i+\mu_i)$, $ p_{i,i-1}=\mu_i/(\lambda_i+\mu_i)$ for $i\geq 1.$
Using \eqref{exc}, we can choose $n_0$ large enough so that
$\,p=\sup_{n\geq n_0}\lambda_n/(\lambda_n+\mu_n) <1/2.$
Then, for $n\geq n_0$, $H_n$ is stochastically dominated by $T$, the hitting time of $n-1$ by a simple random walk starting at $n$, with probability transitions $(1-p,p)$.
Since $p<1/2$, $\Esp(T^2)<+\infty$. Hence $\sup_{n\geq n_0}\Einf{H_n^2}<+\infty$
  and  the sequences $(\Einf{H_n})_n$ and $(\Einf{H_n^2})_n$ are bounded. 
  
  \smallskip \noindent Let us now consider  the Laplace transform of $H_n$  given by $\widehat G_n(a)=\Einf{\exp(-aH_n)}$. In the same vein as we have obtained \eqref{recurrence_Gn} and  by applying the strong Markov property at the first time when $X$ jumps after $T_n$, we get the recursion formula
\begin{equation}\label{recurrence_hat_Gn}
\widehat G_n(a)=\frac{\mu_n}{\lambda_n+\mu_n}+\frac{\lambda_n}{\lambda_n+\mu_n}e^{-a}\widehat G_n(a)\widehat G_{n+1}(a),\quad a\geq 0,\ n\geq1.
\end{equation}
Differentiating \eqref{recurrence_hat_Gn} twice at $a=0$, the second moment of $H_n$ satisfies the following recursion formula
\begin{equation*}
\frac{\mu_n}{\lambda_n}\Einf{H_n^2}=\Einf{H_{n+1}^2}+1+2\left(\Einf{H_n}
+\Einf{H_{n+1}}+\Einf{H_n}
\Einf{H_{n+1}}\right).\label{recurrence_Hn2}
\end{equation*}
 We have seen  that the right hand side of the latter is uniformly bounded in $n\geq0$.  It entails that there is $C>0$ such that
\begin{equation}\label{majoration_moment2_Hn}
\Einf{H_n^2}\leq C\,\frac{\lambda_n}{\mu_n},\quad n\geq 1.
\end{equation}
Finally,
$\Einf{\sum_{n\geq1}\left(\frac{H_n}{n}\right)^2}\leq C\,\sum_{n\geq1}\frac{1}{n^2}\frac{\lambda_n}{\mu_n}<+\infty$ using again \eqref{exc}.
%
In particular,  it turns out that the sequence $\,(\frac{H_n}{n})_{n}\,$ almost surely goes to $0$ as $n\to+\infty$.
\end{proof}

\bi
Let us now introduce  the quantity
$$R(x,y):=\Esp_{\infty}(T_{[ x]})/\Esp_{\infty}(T_{[ y]})$$
and  study the behavior of $Y(t)/v(t)$ as $t$ tends to $0$. 

\me
\begin{prop} \label{inv}
\emph{(i)}  If  $T_n/\Esp_{\infty} (T_n)$ is tight on $(0,\infty)$ and for every $x>1$, 
$\lim_{n\rightarrow \infty} R(nx,n)=0$ 
then $Y(t)/v(t)\rightarrow 1$ in $\p_\infty-\textrm{probability}$. 

\me
\emph{(ii)} 
 If  $T_n/\Esp_{\infty} (T_n) \to 1$ in $\p_\infty-\textrm{probability}$ and if for every $x>1$,  $\limsup_{n\rightarrow \infty} R(nx,n)<1$,
then $Y(t)/v(t)\rightarrow 1$ in $\p_\infty-\textrm{probability}$. 

\me
\emph{(iii)}  If $\lim_{n\rightarrow \infty}R(n+1,n)=1$ and $\limsup_{n\rightarrow \infty} R(nx,n)<1$   for every $x>1$,
then 
\be
\label{*}\lim_{a \rightarrow 1} \limsup_{t\rightarrow 0} |\frac{v(a t)}{v(t)}-1|=0.\ee
 If additionally  $T_n/\Esp_{\infty}(T_n)\rightarrow 1$ a.s., 
then $Y(t)/v(t)\rightarrow 1$  $\p_{\infty}$ a.s. 
\end{prop}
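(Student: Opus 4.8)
The plan is to reduce the whole statement to a comparison between the random times $T_n$ and their means $\Esp_\infty(T_n)$, which the hypotheses on $R$ control at the multiplicative scale. The starting point is that, under $\p_\infty$, the sequence $(T_n)_n$ decreases to $0$, so that $Y(t)=\min\{n\ge 0:\ T_n\le t\}$ and $v(t)=\min\{n\ge 0:\ \Esp_\infty(T_n)\le t\}$. This yields the two exact identities $\{Y(t)\ge m\}=\{T_{m-1}>t\}$ and $\{Y(t)\le m\}=\{T_m\le t\}$, on which every estimate below is based. Throughout I write $g(n):=\Esp_\infty(T_n)$, a nonincreasing sequence tending to $0$, so that $R(x,y)=g([x])/g([y])$.

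For (i) and (ii) I would fix $\eps>0$ and set $n=v(t)$, so that $g(n)\le t< g(n-1)$. For the upper deviation, with $m=\lceil(1+\eps)n\rceil$, the first identity gives $\{Y(t)\ge m\}=\{T_{m-1}>t\}\subseteq\{T_{m-1}>g(n)\}$, which I rewrite as $\{T_{m-1}/g(m-1)>1/R(m-1,n)\}$. Since $R(m-1,n)$ behaves like $R((1+\eps)n,n)$, in regime (i) the threshold $1/R(m-1,n)\to+\infty$ and tightness of $T_{m-1}/g(m-1)$ on $(0,\infty)$ forces this probability to $0$; in regime (ii) one has $1/R(m-1,n)\ge 1+\eta$ for some $\eta>0$ thanks to $\limsup R(nx,n)<1$, and the convergence $T_{m-1}/g(m-1)\to 1$ in probability does the job. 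The lower deviation is symmetric: with $m=\lfloor(1-\eps)n\rfloor$ one uses $\{Y(t)\le m\}=\{T_m\le t\}\subseteq\{T_m\le g(n-1)\}$ and the threshold $R(n-1,m)$, which tends to $0$ in regime (i) and is $\le 1-\eta$ in regime (ii) (apply the hypotheses with $x=1/(1-\eps)$). Adding the two bounds yields $Y(t)/v(t)\to1$ in $\p_\infty$-probability.

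For (iii) the heart of the matter is the purely deterministic estimate \eqref{*}, i.e. transferring the multiplicative regularity of $g$ to its generalized inverse $v$. Fix $\eps>0$ and compare $m=v(at)$ with $n=v(t)$ through $g(m)\le at< g(m-1)$ and $g(n)\le t< g(n-1)$. For $a>1$ monotonicity of $v$ already gives $m\le n$; conversely $g(m)\le at< a\,g(n-1)$, and if $m\le(1-\eps)(n-1)$ then $g(m)\ge g((1-\eps)(n-1))$, so $g(m)/g(n-1)\ge 1/R(n-1,(1-\eps)(n-1))\ge 1+\eta(\eps)$ for $n$ large, by $\limsup R(nx,n)<1$ with $x=1/(1-\eps)$; this contradicts $g(m)/g(n-1)<a$ as soon as $a<1+\eta(\eps)$. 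Hence $1-\eps\le \liminf_{t\to0} v(at)/v(t)\le\limsup_{t\to0}v(at)/v(t)\le 1$ for such $a$, and the case $a<1$ is symmetric. Letting $a\to1$ gives \eqref{*}. The assumption $R(n+1,n)\to1$ is what guarantees that $v$ makes no disproportionately large jump, so that the integer-part roundings above are harmless; this inversion step is the one I expect to require the most care.

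Finally, assuming $T_n/g(n)\to1$ $\p_\infty$-a.s., for every $\delta>0$ there is a.s. an integer $N$ with $(1-\delta)g(n)\le T_n\le(1+\delta)g(n)$ for all $n\ge N$. Plugging these bounds into $Y(t)=\min\{n:\ T_n\le t\}$ gives, for $t$ small enough, the a.s. sandwich $v\big(t/(1-\delta)\big)\le Y(t)\le v\big(t/(1+\delta)\big)$. Dividing by $v(t)$ and applying \eqref{*} with $a=1/(1-\delta)$ and $a=1/(1+\delta)$ yields $\limsup_{t\to0}|Y(t)/v(t)-1|\le\phi(\delta)$ with $\phi(\delta)\to0$ as $\delta\to0$; letting $\delta\to0$ along a sequence gives $Y(t)/v(t)\to1$ $\p_\infty$-a.s. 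The only genuine obstacle is \eqref{*} itself; once it is available, (i)--(iii) all follow from the identities for $Y$ combined with the appropriate mode of convergence of $T_n/g(n)$.
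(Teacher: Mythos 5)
Your proof is correct. For (i) and (ii) it is essentially the paper's own argument: both start from the sandwich $\Esp_\infty(T_{v(t)})\leq t<\Esp_\infty(T_{v(t)-1})$, convert deviations of $Y(t)$ into events involving a single hitting time (your identities $\{Y(t)\geq m\}=\{T_{m-1}>t\}$ and $\{Y(t)\leq m\}=\{T_m\leq t\}$ are exactly what the paper's ``$Y$ is non-increasing'' step uses), and kill those events by tightness on $(0,\infty)$, resp.\ convergence in probability to $1$, once the hypotheses on $R$ push the thresholds to $\infty$ and $0$, resp.\ away from $1$ on both sides. Part (iii) is where you genuinely diverge: the paper proves \eqref{*} by writing $t<\Esp_\infty(T_{v(t)-1})=R(v(t)-1,v(t))\,\Esp_\infty(T_{v(t)})$ and invoking $R(n+1,n)\to1$ to replace $\Esp_\infty(T_{v(t)-1})$ by $\Esp_\infty(T_{v(t)})$ up to a factor $\eta$ close to $1$, and only then uses $\limsup_n R(nx,n)<1$; your contradiction argument instead compares $\Esp_\infty(T_{v(at)})$ directly with $\Esp_\infty(T_{v(t)-1})$, applying $\limsup_n R(nx,n)<1$ at the base point $\lfloor(1-\eps)(v(t)-1)\rfloor$, and since $v(t)\to\infty$ the shift from $v(t)$ to $v(t)-1$ costs nothing in the limit. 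The upshot is that your proof of \eqref{*} never uses $R(n+1,n)\to1$ at all, so it actually establishes a slightly stronger statement in which that hypothesis is dropped; in particular your closing remark that this assumption is what makes ``the integer-part roundings harmless'' is inaccurate (your roundings only use monotonicity of $n\mapsto\Esp_\infty(T_n)$), but that is a misattribution, not a gap. For the a.s.\ conclusion the two routes are equivalent in substance: the paper squeezes $Y(t)/v(t)$ between $n/v(T_n)$ and $n/v(T_{n-1})$ on the event $t\in[T_n,T_{n-1})$, while you turn the a.s.\ bounds $(1-\delta)\Esp_\infty(T_n)\leq T_n\leq(1+\delta)\Esp_\infty(T_n)$ into the sandwich $v(t/(1-\delta))\leq Y(t)\leq v(t/(1+\delta))$ and then apply \eqref{*} at fixed $a$, which is marginally cleaner because \eqref{*} is then invoked only with deterministic $a$ rather than through composition with the random times $T_n$.
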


\begin{proof}  (i) Under the tightness assumption,  for any $\epsilon >0$, there exist $0<A\leq B$ such that for every $n\geq 0$,
\begin{equation}\label{eq_1520}
\p_\infty(A\leq  T_n/ \Esp_\infty(T_n)  \leq B)\geq 1-\epsilon.
\end{equation}
Moreover, for every $x>1$ and  $n$ large enough,
\begin{equation}\label{eq_1521}
R(nx,n) 
\leq \min(1/(2B),A/2).
\end{equation}
By the definition of the function $v$, we have
$$\Esp_\infty\left(T_{v(t)}\right)\leq t<\Esp_\infty\left(T_{v(t)-1}\right).$$
It implies that for any $t>0$,
$$\pinf{T_{[xv(t)]}\leq \frac{t}{2}}\geq \pinf{T_{[xv(t)]} \leq  \frac{\Esp_\infty\left(T_{v(t)}\right)}{2}}=\pinf{\frac{T_{[xv(t)]}}{ \Esp_\infty\left(T_{[xv(t)]}\right)} \leq  \frac{R(v(t),xv(t))}{2}}.$$
Hence, using \eqref{eq_1520} and \eqref{eq_1521} and for $t$ small enough,
$$\p_\infty(T_{[xv(t)]}\leq t/2)\geq\p_\infty \left(\frac{T_{[xv(t)]}}{ \Esp_\infty\left(T_{[xv(t)]}\right)}   \leq B  \right )\geq 1-\epsilon.$$
We similarly get that for $t$ small enough
$$\p_\infty(T_{[v(t)/x]}\geq 2t)\geq\p_\infty \left(\frac{T_{[v(t)/x]}}{ \Esp_\infty\left(T_{[v(t)/x]}\right)}   \geq A \right )\geq 1-\epsilon.$$
Then,  we have for $t$ small enough
$$\p_\infty(T_{[xv(t)]}\leq t/2, T_{[v(t)/x]}\geq 2t)\geq 1-2\epsilon.$$
Since $Y$ is non-increasing, that implies 
$\p_\infty(   Y(t) \in [v(t)/x,v(t)x])\geq 1-2\epsilon$
and ensures that
$Y(t)/v(t)$ tends to $1$ in probability as $t\rightarrow 0$. 

\me The proof of (ii) follows the same steps as the one of (i). Since $T_n/\Esp_{\infty} (T_n) \to 1$ in $\p_\infty-\textrm{probability}$ as $n$ tends to infinity,  we can  choose for any $x>1$, $A$ and
 $B$ close enough to $1$ and $a<1$ such that
$$\p_\infty \left(A\leq \frac{T_{[v(t)/x]}}{\Esp_\infty\left(T_{[v(t)/x]}\right)}   \leq B \right )\geq 1-\epsilon, \quad R(nx,n) 
\leq a\min(1/B,A)$$
for $t$ small enough and $n$ large enough.
We   conclude as previously using now $\p_\infty(T_{[xv(t)]}\leq a t, T_{[v(t)/x]}\geq t/a)\geq 1-2\epsilon.$

\me Let us now prove (iii).  We first note that  $v$ is non-increasing.  Let us first prove that $$\lim_{a \downarrow 1} \liminf_{t\rightarrow 0} \frac{v(a t)}{v(t)}=1.$$
Then
for all $t>0$ and $a>1$,
\be
\label{quotientv}
\frac{v(at)}{v(t)} \leq 1.
\ee
Moreover,
$$\Esp_{\infty}(T_{v( at)})\leq t a, \qquad \Esp_{\infty}(T_{v(t)})\,R(v(t)-1,v(t))=\Esp_{\infty}(T_{v(t)-1})\geq t.$$
For all $\eta>1$, the first assumption of (iii) yields  $ R(v(t)-1,v(t))\leq \eta$  for  $t$ small enough and we get
$$\Esp_{\infty}(T_{v(at)})\leq a\, \eta \Esp_{\infty}(T_{v(t)}),$$
which implies that
$$\lim_{a \downarrow 1} \limsup_{t\rightarrow 0} R(v(a t),v(t))\leq 1.$$
Moreover  the second  assumption of (iii) ensures that  $\ \liminf_{n\rightarrow \infty} R(nx,n)>1\,$
   for every $x<1$.
We add that for each  $t$ such that $v(at) \leq xv(t)$, we have
$$R(v(at),v(t))\geq R(xv(t),v(t))$$
Combining the three last displays ensures that for any $x<1$ ensures that
 $$ \lim_{a \downarrow 1} \liminf_{t\rightarrow 0} \frac{v(a t)}{v(t)} \geq x.$$
Letting $x \rightarrow 1$ and recalling (\ref{quotientv}) yields $$\ \lim_{a \downarrow 1} \limsup_{t\rightarrow 0} \big\vert \frac{v(a t)}{v(t)}-1\big\vert=0.$$
To conclude to  the first part of (iii), it remains to consider $a<1$, which is simply derived from the previous limit by changing $t$ into $t/a$. \\
We assume now that  $T_n/\Esp_{\infty}(T_n)\rightarrow 1$ a.s. and \eqref{*} enables us to compose the equivalence by $v$:
$$\lim_{n\rightarrow \infty} \frac{v(T_n)}{v(\Esp_{\infty}(T_n))}= 1=\lim_{n\rightarrow \infty} \frac{v(T_n)}{n} \quad  a.s.$$
since  $v(\Esp_{\infty}(T_n))=n$ by definition of $v$ and  $n\rightarrow \Esp_{\infty}(T_n)$ is decreasing.
Noting that $T_n$ goes to $0$ a.s. and 
  $$ \frac{n}{v(T_{n})} \leq  \frac{Y(t)}{v(t)}\leq \frac{n}{v(T_{n-1})}$$
a.s. on the event  $t\in [T_{n}, T_{n-1})$ ends up the proof of (iii).
\end{proof}

\me

\subsection{Fast coming down from infinity}
\label{speedfast}

We state the convergence  in probability  inherited from Theorem \ref{Behavior_Tnb} ($\alpha>0$).  

\begin{thm}\label{cor-v}  We assume that   \eqref{condition_extinction}, \eqref{cv_{S}}  and  \eqref{condition_limite}  hold and
 $$\Esp_{n+1}(T_n)/\Esp_\infty(T_n)\linf{n}  \alpha \in (0,1].$$
Then
$$\lim_{t\to0}\frac{X(t)}{v(t)}=1 \textrm{ in }\p_\infty-\textrm{probability}.$$
 \end{thm}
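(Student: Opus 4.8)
The plan is to reduce the assertion on $X$ to the corresponding assertion on the non-increasing process $Y$, which is delivered by the inversion Proposition~\ref{inv}, and then to close the gap between $X$ and $Y$ using the excursion heights of Lemma~\ref{excursion}.

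First I would prove that $Y(t)/v(t)\to1$ in $\p_\infty$-probability by checking the two hypotheses of Proposition~\ref{inv}(i). For the tightness of $T_n/\Esp_\infty(T_n)$ on $(0,\infty)$, I would rely on Theorem~\ref{Behavior_Tnb}, which gives convergence in distribution towards $Z=\sum_{k\geq0}\alpha(1-\alpha)^kZ_k$; it then suffices to verify that $Z\in(0,\infty)$ $\p_\infty$-a.s. Finiteness is clear since $Z$ has a proper Laplace transform. For strict positivity I would use $Z\geq\alpha Z_0$ together with the identity $\p_\infty(Z_0=0)=\lim_{a\to+\infty}G(a)$; letting $a\to+\infty$ in \eqref{equation_G} and noting that $1-l(1-\alpha)>0$ because $l<1$, the bracket diverges and forces $G(a)\to0$, whence $Z_0>0$ and $Z>0$ $\p_\infty$-a.s. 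This yields the required tightness on $(0,\infty)$.

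For the second hypothesis, $\lim_{n\to\infty}R(nx,n)=0$ for every $x>1$, I would exploit the geometric decay of the mean hitting times. By \eqref{equivalents_ratios_alpha}, $\Esp_\infty(T_{n+1})/\Esp_\infty(T_n)\to1-\alpha<1$, so writing $R(nx,n)=\Esp_\infty(T_{[nx]})/\Esp_\infty(T_n)$ as the telescoping product of the ratios $\Esp_\infty(T_{j+1})/\Esp_\infty(T_j)$ over $n\leq j<[nx]$ and fixing some $\beta\in(1-\alpha,1)$, I would bound $R(nx,n)\leq\beta^{[nx]-n}$ for $n$ large. As $x>1$ gives $[nx]-n\to+\infty$, this proves $R(nx,n)\to0$. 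Proposition~\ref{inv}(i) then gives $Y(t)/v(t)\to1$ in $\p_\infty$-probability.

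It remains to pass from $Y$ to $X$ through Lemma~\ref{excursion}. Assumption \eqref{condition_limite} implies $l<1$, hence \eqref{exc} holds and Lemma~\ref{excursion}(ii) yields $H_n/n\to0$ $\p_\infty$-a.s. Since $T_n\downarrow0$ a.s. and $Y(t)=n$ on $[T_n,T_{n-1})$, we have $Y(t)\to+\infty$ a.s. as $t\to0$, so $H_{Y(t)}/Y(t)\to0$ $\p_\infty$-a.s. by composition. As $Y(t)/v(t)\to1$ in probability (hence is tight), the upper bound in \eqref{encadrement_X_Y} tends to $0$ in $\p_\infty$-probability, and therefore $X(t)/v(t)-Y(t)/v(t)\to0$ in probability; combined with $Y(t)/v(t)\to1$, this gives $X(t)/v(t)\to1$ in $\p_\infty$-probability. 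The step I expect to require the most care is the tightness on $(0,\infty)$, i.e. the strict positivity of the limit $Z$, which rests on the asymptotics $G(a)\to0$ read off from the fixed-point equation \eqref{equation_G}; once this is secured, the remainder is a direct assembly of Proposition~\ref{inv} and Lemma~\ref{excursion}.
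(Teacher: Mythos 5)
Your proposal is correct and follows essentially the same route as the paper's own proof: tightness of $T_n/\Esp_\infty(T_n)$ on $(0,\infty)$ from Theorem \ref{Behavior_Tnb} (via $G(a)\to0$ as $a\to\infty$), the geometric decay $R(nx,n)\to0$ from \eqref{equivalents_ratios_alpha}, then Proposition \ref{inv}(i) followed by Lemma \ref{excursion}. You merely spell out some details the paper leaves implicit (strict positivity of $Z$, the $\beta$-bound in the telescoping product, and the final tightness argument passing from $Y$ to $X$), all of which are sound.
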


\begin{proof}
Under the assumptions of Theorem 
 \ref{Behavior_Tnb}, 
 the sequence  $T_n/\Esp_{\infty} (T_n)$ converges in law to a random variable whose law is supported on $(0,\infty)$, since $G(a) \rightarrow 0$ as $a\rightarrow \infty$. Then this sequence is tight on $(0,\infty)$. Let us fix $x>1$ and show that 
$$\lim_{n\rightarrow \infty} R(nx,n)=\lim_{n\rightarrow \infty} \frac{\Esp_{\infty}(T_{[ nx]})}{\Esp_{\infty}(T_{ n})} = 0.$$ 
Indeed,  as seen in \eqref{equivalents_ratios_alpha}, 
$\lim_{n\to+\infty}\frac{\Esp_\infty(T_{n+1})}{\Esp_\infty(T_n)}=1-\alpha$.
Then for $n$ large enough,
$$\lim_{n\rightarrow \infty}\frac{\Esp_{\infty}(T_{[ nx]})}{\Esp_{\infty}(T_{ n})} = \lim_{n\rightarrow \infty}\prod_{j=n}^{[nx]-1}\frac{\Esp_\infty(T_{j+1})}{\Esp_\infty(T_j)}= \lim_{n\rightarrow \infty}(1-\alpha)^{[nx]-n} = 0.$$
Then by Proposition \ref{inv}-(i), $Y(t)/v(t)\rightarrow 1$ in $\p_\infty-\textrm{probability}$. 
Lemma   \ref{excursion} will then allow to conclude. 
\end{proof}

\bigskip \noindent
\emph{Example.} Let us consider a pure death process with $\mu_n=e^{\beta n}$ with $\beta>0$ and thus  $\Einf{T_n}\sim e^{-\beta(n+1)}/(1-e^{-\beta})$. Hence, the conditions of Theorem \ref{Behavior_Tnb} hold true with $\alpha=1-e^{-\beta}$ and
$$e^{\beta(n+1)} T_n\overset{\textrm{(d)}}{\linf{n}}\sum_{k\geq0}e^{-\beta k}E_k$$ where the $E_k$'s are independant  exponential random variables  with parameter $1$.
In that case, we can explicitly determine the speed $v$ of Theorem \ref{cor-v} and  we get
$X(t)\sim-(\log t)/\beta$ as $t\to0$ in probability.


\subsection{Gradual  coming down from infinity}

We give now the speed of convergence  and describe the fluctuations of $X$ in the case $\alpha=0$.

\begin{thm} \label{Comportement_X_variation_reguliere} 
 We assume that  \eqref{condition_extinction},  \eqref{cv_{S}},  \eqref{condition_croissance}  hold and $\Esp_{n+1}(T_n)/ \Esp_\infty(T_n)\linf{n}  0$. \\
We assume  also that \eqref{exc} holds and that  for every $x>1$,
\begin{equation}
 \label{condinvv}
\limsup_{n\rightarrow \infty} \frac{\Esp_{\infty}(T_{[nx]})}{\Esp_{\infty}(T_{n})}<1.
\end{equation}
Then,
\begin{equation*}\label{equivalent_Xt_2}
 \lim_{t\to0}\frac{X(t)}{v(t)}=1 \qquad  \text{in } \ \p_\infty-\text{probability}.
\end{equation*}
Assuming further that $\sum_{n}\Big( \Esp_{n+1}(T_n)/ \Esp_\infty(T_n)\Big)^2<\infty$, this convergence holds a.s.
\end{thm}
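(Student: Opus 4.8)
The plan is to factor the convergence $X(t)/v(t)\to 1$ through the more regular non-increasing process $Y$, exactly as the section's architecture suggests: first establish $Y(t)/v(t)\to 1$ by inverting the hitting-time asymptotics of Section~\ref{compTn} via Proposition~\ref{inv}, and then transfer this to $X$ by controlling the height of the excursions through Lemma~\ref{excursion}. The two modes of convergence (in probability and almost sure) run in parallel, each feeding a different part of Proposition~\ref{inv}.

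For the convergence in probability, I would first invoke Theorem~\ref{Behavior_Tn}(i): its hypotheses are \eqref{condition_extinction}, \eqref{cv_{S}}, \eqref{condition_croissance} together with $\Esp_{n+1}(T_n)/\Esp_\infty(T_n)\to 0$, all of which are assumed here, so $T_n/\Esp_\infty(T_n)\to 1$ in $\p_\infty$-probability. Assumption \eqref{condinvv} is precisely $\limsup_n R(nx,n)<1$ for every $x>1$, so Proposition~\ref{inv}(ii) applies and yields $Y(t)/v(t)\to 1$ in $\p_\infty$-probability. It then remains to pass from $Y$ to $X$. From Lemma~\ref{excursion}(i),
$$\frac{Y(t)}{v(t)}\;\leq\;\frac{X(t)}{v(t)}\;\leq\;\frac{Y(t)}{v(t)}\left(1+\frac{H_{Y(t)}}{Y(t)}\right),$$
so I would squeeze. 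Since $v(t)\to+\infty$ and $Y(t)\to+\infty$ as $t\to 0$ (the latter because $T_n\to 0$ $\p_\infty$-a.s., the process coming down instantaneously), and since Lemma~\ref{excursion}(ii) gives $H_n/n\to 0$ a.s. under \eqref{exc}, one obtains $H_{Y(t)}/Y(t)\to 0$ $\p_\infty$-a.s. as $t\to 0$. Combined with the tightness of $Y(t)/v(t)$ afforded by its convergence in probability, the right-hand bound forces $X(t)/v(t)-Y(t)/v(t)\to 0$ in probability, whence $X(t)/v(t)\to 1$.

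For the almost sure statement, the extra hypothesis $\sum_n(\Esp_{n+1}(T_n)/\Esp_\infty(T_n))^2<\infty$ is exactly \eqref{hyp-carre}, so Theorem~\ref{SLGN} upgrades the hitting-time convergence to $T_n/\Esp_\infty(T_n)\to 1$ $\p_\infty$-a.s. To apply Proposition~\ref{inv}(iii) I would first observe that in the gradual regime $R(n+1,n)=\Esp_\infty(T_{n+1})/\Esp_\infty(T_n)=1-r_n\to 1$, since $r_n=\Esp_{n+1}(T_n)/\Esp_\infty(T_n)\to 0$; together with \eqref{condinvv} this meets both hypotheses of (iii) and gives $Y(t)/v(t)\to 1$ $\p_\infty$-a.s. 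The same excursion squeeze as above, now using the a.s. convergences $H_{Y(t)}/Y(t)\to 0$ and $Y(t)/v(t)\to 1$, yields $X(t)/v(t)\to 1$ $\p_\infty$-a.s.

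The routine part consists of the invocations of the already-proven results; the only genuinely delicate point is the transfer from $Y$ to $X$, i.e.\ justifying $H_{Y(t)}/Y(t)\to 0$ as $t\to 0$. This rests on the fact that $Y(t)\to+\infty$ as $t\to 0$, so that the a.s. statement $H_n/n\to 0$ of Lemma~\ref{excursion}(ii), which is indexed by the deterministic \emph{level} $n$, may legitimately be evaluated along the random levels $Y(t)$, and on the tightness (resp.\ convergence) of $Y(t)/v(t)$, so that its product with the vanishing ratio is controlled. The remaining care is purely bookkeeping: checking that \eqref{condinvv} coincides with the $\limsup R(nx,n)<1$ condition of Proposition~\ref{inv} and that $R(n+1,n)\to 1$ follows for free from $r_n\to 0$ in the gradual regime.
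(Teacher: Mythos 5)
Your proposal is correct and follows essentially the same route as the paper's own proof: Theorem \ref{Behavior_Tn}(i) (resp.\ Theorem \ref{SLGN} under \eqref{hyp-carre}) for the hitting times, Proposition \ref{inv}(ii) (resp.\ (iii), after noting $R(n+1,n)=1-r_n\to1$) for the inversion to $Y(t)/v(t)$, and Lemma \ref{excursion} with $Y(t)\to+\infty$ to transfer from $Y$ to $X$. The only difference is that you spell out the squeeze $0\leq X(t)/v(t)-Y(t)/v(t)\leq (H_{Y(t)}/Y(t))\,(Y(t)/v(t))$ slightly more explicitly than the paper does, which is a cosmetic rather than mathematical distinction.
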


\me The additional assumption \eqref{condinvv} is required for the inversion (Proposition \ref{inv} (ii)). Indeed the quantity
$\Esp_{\infty}(T_{n})$   should not tend too slowly to $0$. The  example $\mu_n=n\log^{\gamma}n$ ($\gamma>1$) and $\lambda_n=0$ shows that \eqref{condinvv} may fail while   the other assumptions hold (see Section \ref{gradualspeed} for details).

\begin{proof} Let us deal with the convergence in probability and work under $\p_{\infty}$.
 The first four assumptions allow us to apply Theorem \ref{Behavior_Tn}, so that $T_n/\Esp_{\infty}(T_n) \rightarrow 1$ in probability as $n\rightarrow \infty$.
Using \eqref{condinvv}, we can apply Proposition \ref{inv} (ii) to get that $Y(t)/v(t)\rightarrow 1$ in probability.  Moreover Assumption \eqref{exc} enables us to use Lemma \ref{excursion}.
It    ensures that $H_{Y(t)}/Y(t) \rightarrow 0$ a.s. since $Y(t)\rightarrow +\infty \ $ a.s as $t\to 0$ and $X(t)/v(t)-Y(t)/v(t)\rightarrow 0$ in probability and then  the 
convergence   in probability of 
$X(t)/v(t)$ to $1$. 

\me We note now  that 
$$R(n+1,n)= 1-\frac{\Esp_{n+1}(T_n)}{\Esp_{\infty}(T_n)}.$$ Thus,  $\Esp_{n+1}(T_n)/ \Esp_\infty(T_n)\rightarrow  0$ yields 
$R(n+1,n) \rightarrow  1$ as $n\rightarrow \infty$. Then the a.s. convergence is obtained similarly combining Theorem \ref{SLGN}, Lemma \ref{excursion} and Proposition \ref{inv} (iii). 
\end{proof}

\begin{rem} We  remark that if  $\lambda_n=0$ and $\mu_n=n(n-1)/2$, $X(t)$ is the number of blocks of the  Kingman coalescent at time $t$.  In this case,
$$\Esp_\infty(T_n) = \sum_{i\geq n}\frac{2}{i(i+1)} = \frac{2}{n}.$$
Then, 
$v(t) = \frac{2}{t}$ and we recover  from Theorem \ref{Comportement_X_variation_reguliere}  the speed of coming down from infinity for this process, obtained by Aldous in \cite{Aldous99}-paragraph 4.2.:  $\, tX(t)\,\underset{t\to 0}{\longrightarrow}\,2\as.$  \end{rem}

\me
We refer to the next section for more general examples, where we also provide the  fluctuations of $X$ under $\mathbb{P}_{\infty}$ for $t$ close to $0$ using the following result.


\me
\begin{prop} \label{corTLC}We assume that  \eqref{condition_extinction},  \eqref{cv_{S}}  and \eqref{condition_croissance} hold 
and  that
 \eqref{hypothese_variances} and \eqref{hypothese_moment_ordre3} 
 hold. \\
We also assume that
$ \sum_{n\in \mathbb N} \frac{1}{n} \frac{\lambda_n}{\mu_n}<\infty$ and $\eqref{condinvv}$  and that for every  $x \in \R$,
 \begin{equation}\label{eq:1747b}
\frac{t-\Einf{T_{s(x,t)}}}{\sqrt{\Vinf{T_{s(x,t)}}}}\underset{t\to0}\longrightarrow x,
\end{equation}
 where  $s(x,t)=[v(t)+x\sqrt{v(t)}]$. Then, 
\begin{equation}\label{eq:1844}
\sqrt{v(t)}\left(\frac{X(t)}{v(t)}-1\right)\underset{t\to0}{\overset{\mathrm{(d)}}\longrightarrow} {\cal N},
\end{equation}
where ${\cal N}$ follows a standard normal distribution.
\end{prop}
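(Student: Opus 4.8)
The plan is to invert the central limit theorem for $T_n$ (Theorem \ref{Behavior_Tn}(ii)) into a central limit theorem for the non-increasing process $Y$, and then to transfer the result to $X$ by a Slutsky argument based on the excursion estimates of Lemma \ref{excursion}. Throughout I work under $\p_\infty$, I write $\Phi$ for the standard normal distribution function, and I set $W_n:=(T_n-\Einf{T_n})/\Vinf{T_n}^{1/2}$, so that Theorem \ref{Behavior_Tn}(ii) reads $W_n\overset{\mathrm{(d)}}{\to}\mathcal N$ as $n\to\infty$. Since $\Phi$ is continuous, this convergence of distribution functions is uniform in the evaluation point by P\'olya's theorem, a fact I will use repeatedly.

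First I would prove the statement with $Y$ in place of $X$. Because $Y$ is integer-valued, non-increasing, and equal to $n$ on $[T_n,T_{n-1})$, one has the exact identity $\{Y(t)\le m\}=\{T_m\le t\}$ for every integer $m$. Taking $m=s(x,t)=[v(t)+x\sqrt{v(t)}]$ and using that $Y(t)\in\Nat$, for each fixed $x\in\R$ I obtain
\[
\pinf{\sqrt{v(t)}\Big(\tfrac{Y(t)}{v(t)}-1\Big)\le x}=\pinf{Y(t)\le s(x,t)}=\pinf{T_{s(x,t)}\le t}=\pinf{W_{s(x,t)}\le a(x,t)},
\]
where $a(x,t):=(t-\Einf{T_{s(x,t)}})/\Vinf{T_{s(x,t)}}^{1/2}$ is deterministic. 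As $t\to0$ we have $v(t)\to\infty$, hence $s(x,t)\to\infty$, while assumption \eqref{eq:1747b} gives $a(x,t)\to x$. Combining the uniform convergence of the distribution functions of $W_{s(x,t)}$ with the continuity of $\Phi$ at $x$, the right-hand side tends to $\Phi(x)$; since this holds for every $x$, $\sqrt{v(t)}(Y(t)/v(t)-1)\overset{\mathrm{(d)}}{\to}\mathcal N$.

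Next I would pass from $Y$ to $X$. From Lemma \ref{excursion}(i), $0\le X(t)-Y(t)\le H_{Y(t)}$, so it suffices to check that $H_{Y(t)}/\sqrt{v(t)}\to0$ in probability, after which the decomposition
\[
\sqrt{v(t)}\Big(\tfrac{X(t)}{v(t)}-1\Big)=\sqrt{v(t)}\Big(\tfrac{Y(t)}{v(t)}-1\Big)+\tfrac{X(t)-Y(t)}{\sqrt{v(t)}}
\]
and Slutsky's lemma yield \eqref{eq:1844}. Here the finer scale $\sqrt{v(t)}$ is exactly what forces the stronger summability hypothesis: combining the estimate $\Einf{H_n^2}\le C\lambda_n/\mu_n$ from \eqref{majoration_moment2_Hn} with $\sum_n\frac1n\frac{\lambda_n}{\mu_n}<\infty$ gives $\Einf{\sum_n H_n^2/n}<\infty$, whence $H_n/\sqrt n\to0$ $\p_\infty$-a.s. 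Writing $H_{Y(t)}/\sqrt{v(t)}=(H_{Y(t)}/\sqrt{Y(t)})\,(Y(t)/v(t))^{1/2}$ and using that $Y(t)\to\infty$ a.s. as $t\to0$, together with $Y(t)/v(t)\to1$ in probability (Proposition \ref{inv}(ii), applicable since \eqref{condinvv} holds and $T_n/\Einf{T_n}\to1$ in probability by Theorem \ref{Behavior_Tn}(i), the latter being available because \eqref{hypothese_variances} and Cauchy--Schwarz force $r_n\to0$ under \eqref{condition_croissance}), the first factor tends to $0$ a.s. and the second stays bounded in probability. Hence the correction term vanishes in probability.

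The main obstacle is precisely this reduction from $X$ to $Y$ at the central-limit scale: at the law-of-large-numbers scale only $H_n/n\to0$ was needed, whereas here one must control $H_{Y(t)}$ against $\sqrt{v(t)}$, which is why the extra condition $\sum_n\frac1n\frac{\lambda_n}{\mu_n}<\infty$ is imposed and must be used through the second-moment bound \eqref{majoration_moment2_Hn}. A secondary delicate point is the composition in the second paragraph, where the deterministic index $s(x,t)$ and the moving threshold $a(x,t)$ must be handled simultaneously as $t\to0$; it is the continuity of $\Phi$, hence the uniformity of the convergence of the distribution functions, that lets one evaluate the CLT for $T_n$ at a point which itself only converges to $x$.
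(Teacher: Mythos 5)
Your proof is correct and follows essentially the same route as the paper's: a CLT for the monotone process $Y$ obtained by inverting the CLT for $T_n$ through the identity $\{Y(t)\le s(x,t)\}=\{T_{s(x,t)}\le t\}$ together with \eqref{eq:1747b}, followed by the decomposition $\sqrt{v(t)}\left(\frac{X(t)}{v(t)}-1\right)=\sqrt{v(t)}\left(\frac{Y(t)}{v(t)}-1\right)+\frac{X(t)-Y(t)}{\sqrt{v(t)}}$, with the correction term controlled via \eqref{majoration_moment2_Hn} and the summability of $\frac{1}{n}\frac{\lambda_n}{\mu_n}$. The only cosmetic difference is that you deduce $Y(t)/v(t)\to1$ in probability directly from Proposition \ref{inv}(ii) and Theorem \ref{Behavior_Tn}(i), whereas the paper invokes Theorem \ref{Comportement_X_variation_reguliere}; both rest on the same ingredients.
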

\begin{proof}[Proof of Proposition \ref{corTLC}]
The proof follows the same steps than the previous theorem. We
 use the C.L.T theorem for $T_n$ to  firstly establish a central limit theorem for the a.s. non-increasing  process $Y$.

\me
As  $Y$ is non-increasing, we can follow the proof of the central limit theorem for   renewal processes (as suggested by Aldous for Kingman's coalescent, cf. \cite{Aldous99}). More precisely, for any $t>0, x\in \mathbb R$, we use $\pinf{Y(t)> s(x,t)}=\pinf{T_{s(x,t)}> t}$ to get
$$
\pinf{\sqrt{v(t)}\left(\frac{Y(t)}{v(t)}-1\right)> x}=\pinf{\widetilde Z_{ s(x,t)}> \frac{t-\Einf{T_{s(x,t)}}}{\sqrt{\Vinf{T_{s(x,t)}}}}},
$$
where
$$\widetilde Z_n=\frac{T_n-\Einf{T_n}}{\sqrt{\Vinf{T_n}}} $$
All the assumptions of Theorem \ref{Behavior_Tn} (ii) are met, so $\widetilde Z_n$ converges weakly to a standard normal variable.
Using  \eqref{eq:1747b}, we obtain the C.L.T. \eqref{eq:1844} for $Y$.

 \me We end the proof by deducing the C.L.T for $X$ thanks to the  decomposition  
\begin{equation}
\label{dec}
\sqrt{v(t)}\left(\frac{X(t)}{v(t)}-1\right)=\sqrt{v(t)}\left(\frac{Y(t)}{v(t)}-1\right)+\frac{X(t)-Y(t)}{\sqrt{v(t)}}.
\end{equation}
From \eqref{encadrement_X_Y}, we almost surely have
\begin{equation}\label{encadrement_X_Y_2}
0\leq \frac{X(t)-Y(t)}{\sqrt{v(t)}}\leq \frac{H_{Y(t)}}{\sqrt{Y(t)}}\frac{\sqrt{Y(t)}}{\sqrt{v(t)}}.
\end{equation}
\noindent Using \eqref{majoration_moment2_Hn}, there exists $C$ such that
$$\Einf{\sum_{n\geq1}\left(\frac{H_n}{\sqrt{n}}\right)^2}\leq C\sum_{n\geq1}\frac{1}{n}\frac{\lambda_n}{\mu_n}.$$
Since this series converges by hypothesis, $H_n/\sqrt{n}$ a.s.  goes to $0$ as $n\to+\infty$. Recalling  that \eqref{condition_croissance} and \eqref{hypothese_variances} ensure that
$\Esp_{n+1}(T_n)/\Esp_{\infty}(T_n) \rightarrow 0$, 
all the assumptions of Theorem  \ref{Comportement_X_variation_reguliere} are fulfilled,  so
 $Y(t) \sim X(t) \sim v(t)$ as $t\to0$ in probability. Then  the right hand side of \eqref{encadrement_X_Y_2} vanishes as $t\to0$ in probability  and \eqref{dec} allows us to derive \eqref{eq:1844} from the C.L.T for $Y$ established above.
\end{proof}

 \section {Application for   regularly varying death rates} 
\label{ApplVR}
As mentioned  before (see in particular the end of  Section \ref{tract}),
the following class of  birth and death processes is particulary relevant for population dynamics and population genetics models:
\be
\label{cas-par} \lambda_n\leq Cn, \qquad \mu_n=n^\rho\log^\gamma n, \ \hbox{where}\ C>0,  \rho>1 , \gamma \in \R.\ee

\bi This  is a particular case of a main class that we can attain with our results. In what follows, we will suppose that the birth rate is sub-linear as assumed in \eqref{cas-par}.
 The main assumption is that the death rate varies regularly. Our previous theorems apply in this general context. 

\me
Recall that  a
  sequence of real non-zero numbers $(u_n)_{n\geq0}$ varies regularly with index $\rho \ne 0$ if  for all $a>0$,
 $$\lim_{n\to +\infty}\frac{u_{[an]}}{u_n}=a^{\rho}.$$
  A function $g:[0,+\infty)\to(0,+\infty)$ \emph{varies regularly} at $0$ with index $\rho\ne 0$ if for all $a>0$,
$$\lim_{x\to 0}\frac{g(ax)}{g(x)}=a^\rho.$$

\me

\begin{thm} \label{corVR}
Suppose that $\lim_{n\to+\infty}\lambda_n/\mu_n=0$ and that $(\mu_n)_n$ varies regularly with index $\rho>1$.
Then, 
\be
\label{CVps-VR}
\lim_{n\rightarrow \infty} \frac{\mu_{n+1}(\rho - 1)}{n}T_n=\lim_{t\to0}\frac{X(t)}{v(t)}=1 \qquad \p_\infty-\text{a.s}.\ee
where $v$ is  regularly varying at $0$ with index $1/(1-\rho)$.  Further,
\begin{equation*}\label{CLT_{N-VR}}
\frac{\sqrt{2\rho - 1}}{\sqrt{n}}\Big(T_n-\frac{n}{\mu_{n+1} (\rho -1)}\Big)\underset{n\to+\infty}{\overset{\mathrm{(d)}}\longrightarrow} {\cal N},
\end{equation*}
 Assuming further that $\sum_{n\geq1}\frac{1}{n}\frac{\lambda_n}{\mu_{n}}<+\infty$, we also get  that 
$$\sqrt{(2\rho-1)v(t)}\left(\frac{X(t)}{v(t)}-1\right)$$ converges in law, as $t$ tends to $0$, to a standard normal distribution.
 \end{thm}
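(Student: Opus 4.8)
The plan is to obtain Theorem \ref{corVR} as a concrete instance of the law of large numbers and central limit theorems of Sections \ref{section_comportement_T_n}--\ref{section_comportement_X}, so that the entire work is reduced to extracting the precise asymptotics of $\Esp_\infty(T_n)$ and $\Var_\infty(T_n)$ from the regular variation of $(\mu_n)_n$ via Karamata's theorem. First I would verify the structural hypotheses. Since $(\mu_n)_n$ varies regularly with index $\rho>1$, Potter's bounds give $\sup_{n,i}\mu_n/\mu_{n+i}<+\infty$ and $\sum_n 1/\mu_n<+\infty$; together with $\lambda_n/\mu_n\to0$ this is exactly Assumption \eqref{condtech}, so Lemma \ref{lemme_moments_tau_n}(ii) applies and yields \eqref{condition_extinction}, \eqref{cv_{S}}, \eqref{condition_limite} and \eqref{condition_croissance}, together with the moment equivalents $\Esp_{n+1}(T_n^k)\sim k!/\mu_{n+1}^k$ for $k=1,2,3$. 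In particular $\Esp_{n+1}(T_n)\sim1/\mu_{n+1}$ and $\Var_{n+1}(T_n)\sim1/\mu_{n+1}^2$.

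Then I would compute the two tail series. Since $1/\mu_n$ (resp.\ $1/\mu_n^2$) varies regularly with index $-\rho<-1$ (resp.\ $-2\rho<-1$), Karamata's theorem gives $\Esp_\infty(T_n)=\sum_{i\geq n}\Esp_{i+1}(T_i)\sim\sum_{i\geq n}1/\mu_{i+1}\sim n/((\rho-1)\mu_{n+1})$ and $\Var_\infty(T_n)=\sum_{i\geq n}\Var_{i+1}(T_i)\sim n/((2\rho-1)\mu_{n+1}^2)$, the passage of the equivalences through the tail sums being licit because each summand is equivalent to a regularly varying summable sequence. Consequently $\Esp_\infty(T_n)$ varies regularly in $n$ with index $1-\rho$, its generalized inverse $v$ varies regularly at $0$ with reciprocal index $1/(1-\rho)$, and $r_n=\Esp_{n+1}(T_n)/\Esp_\infty(T_n)\sim(\rho-1)/n\to0$, placing us in the gradual regime with $\sum_n r_n^2<+\infty$, i.e.\ \eqref{hyp-carre}. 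Moreover $\Esp_\infty(T_{[nx]})/\Esp_\infty(T_n)\to x^{1-\rho}<1$ for every $x>1$, which gives \eqref{condinvv}, while $\Var_{n+1}(T_n)/\Var_\infty(T_n)\sim(2\rho-1)/n\to0$ is \eqref{hypothese_variances}.

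With these estimates the almost sure part follows at once: Theorem \ref{SLGN} gives $T_n/\Esp_\infty(T_n)\to1$ $\p_\infty$-a.s., hence $(\rho-1)\mu_{n+1}T_n/n\to1$ a.s., and since \eqref{exc} holds trivially ($\lambda_n/\mu_n\to0$) and $\sum_n r_n^2<+\infty$, Theorem \ref{Comportement_X_variation_reguliere} yields $X(t)/v(t)\to1$ a.s., which is \eqref{CVps-VR}. For the central limit theorem for $T_n$ I would apply Theorem \ref{Behavior_Tn}(ii): \eqref{hypothese_variances} is checked above, and \eqref{hypothese_moment_ordre3} follows by bounding the third absolute central moment of $\tau_k$ by $O(1/\mu_{k+1}^3)$ (all moments scale like $j!/\mu_{k+1}^j$, the limit law of $\mu_{k+1}\tau_k$ being exponential) and a further Karamata estimate $\sum_{k\geq n}1/\mu_{k+1}^3\sim n/((3\rho-1)\mu_{n+1}^3)$, so that the ratio in \eqref{hypothese_moment_ordre3} is $O(n^{-1/2})$. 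This gives $(T_n-\Esp_\infty(T_n))/\sqrt{\Var_\infty(T_n)}\to{\cal N}$, and inserting $\sqrt{\Var_\infty(T_n)}\sim\sqrt n/(\sqrt{2\rho-1}\,\mu_{n+1})$ (for which only a relative error is needed) produces the announced scaling $\sqrt{2\rho-1}\,\mu_{n+1}/\sqrt n$.

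The delicate point, and the main obstacle, is the \emph{deterministic centering}: Theorem \ref{Behavior_Tn}(ii) centers at the random-walk mean $\Esp_\infty(T_n)$, whereas the statement centers at $n/((\rho-1)\mu_{n+1})$, and the two must agree up to $o(\sqrt{\Var_\infty(T_n)})=o(\sqrt n/\mu_{n+1})$. The first-order equivalence $\Esp_\infty(T_n)\sim n/((\rho-1)\mu_{n+1})$ only controls a relative error $o(1)$, which is far too weak at the fluctuation scale; I would therefore need a second-order (Euler--Maclaurin, or refined Karamata) expansion of the tail sum showing $\Esp_\infty(T_n)-n/((\rho-1)\mu_{n+1})=O(1/\mu_{n+1})=o(\sqrt n/\mu_{n+1})$, so that the change of centering is negligible. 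Finally, for the central limit theorem for $X$ I would verify the inversion condition \eqref{eq:1747b} of Proposition \ref{corTLC}: with $s(x,t)=[v(t)+x\sqrt{v(t)}]$ and $t\approx\Esp_\infty(T_{v(t)})$, a first-order expansion gives $t-\Esp_\infty(T_{s(x,t)})\sim x\sqrt{v(t)}/\mu_{v(t)+1}$ and $\sqrt{\Var_\infty(T_{s(x,t)})}\sim\sqrt{v(t)}/(\sqrt{2\rho-1}\,\mu_{v(t)+1})$, so the ratio tends to $x\sqrt{2\rho-1}$. This extra factor $\sqrt{2\rho-1}$ is precisely what, after a minor adaptation of the proof of Proposition \ref{corTLC} (replacing the threshold limit $x$ by $x\sqrt{2\rho-1}$), turns $\sqrt{v(t)}(X(t)/v(t)-1)\to{\cal N}$ into $\sqrt{(2\rho-1)v(t)}(X(t)/v(t)-1)\to{\cal N}$; the passage from the monotone process $Y$ to $X$ uses Lemma \ref{excursion} together with the standing assumption $\sum_n\frac1n\frac{\lambda_n}{\mu_n}<+\infty$ to guarantee $H_n/\sqrt n\to0$.
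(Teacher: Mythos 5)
Your route is the paper's own, essentially step for step: check \eqref{condtech} so that Lemma \ref{lemme_moments_tau_n}(ii) applies; extract $\Einf{T_n}\sim n/((\rho-1)\mu_{n+1})$ and $\Vinf{T_n}\sim n/((2\rho-1)\mu_{n+1}^2)$ from the Karamata lemma (Lemma \ref{VR_restes_sommes}); verify \eqref{hyp-carre}, \eqref{condinvv}, \eqref{exc}, \eqref{hypothese_variances} and \eqref{hypothese_moment_ordre3} by the same estimates (the paper bounds the third absolute central moment exactly as you do); and conclude via Theorems \ref{SLGN}, \ref{Comportement_X_variation_reguliere}, \ref{Behavior_Tn}(ii) and Proposition \ref{corTLC}, including the same $\sqrt{2\rho-1}$ adjustment of the threshold in \eqref{eq:1747b} and the same use of $\sum_n\frac{1}{n}\frac{\lambda_n}{\mu_n}<\infty$ to get $H_n/\sqrt n\to0$. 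You also (correctly) restore the factor $\mu_{n+1}$ in the normalization of the $T_n$-CLT, which is missing from the displayed statement.

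The genuine problem sits exactly where you put your finger: the recentering from $\Einf{T_n}$ to the explicit $n/((\rho-1)\mu_{n+1})$. You assert that a refined Karamata/Euler--Maclaurin expansion yields $\Einf{T_n}-n/((\rho-1)\mu_{n+1})=O(1/\mu_{n+1})$, but regular variation gives no control on second-order terms, and the claim is false in general. Take the pure death process with $\mu_n=n^\rho\log n$ (admissible here, and even the case $\gamma=1$ of Corollary \ref{nrho}). Then $\Einf{T_n}=\sum_{j\geq n+1}(j^\rho\log j)^{-1}$, and integration by parts gives
$$\Einf{T_n}=\frac{n^{1-\rho}}{(\rho-1)\log n}-\frac{(1+o(1))\,n^{1-\rho}}{(\rho-1)^2\log^2 n},
\qquad \frac{n}{(\rho-1)\mu_{n+1}}=\frac{n^{1-\rho}}{(\rho-1)\log n}+O\!\left(\frac{n^{-\rho}}{\log n}\right),$$
so the discrepancy is of order $n/(\mu_{n+1}\log n)$, while the fluctuation scale is $\sqrt{\Vinf{T_n}}\asymp\sqrt n/\mu_{n+1}$; the ratio is $\asymp\sqrt n/\log n\to\infty$, and the recentered quantity $\frac{\sqrt{2\rho-1}\,\mu_{n+1}}{\sqrt n}\bigl(T_n-\frac{n}{(\rho-1)\mu_{n+1}}\bigr)$ drifts to $-\infty$ in probability instead of converging to ${\cal N}$. (A second, independent obstruction comes from the births: $m_k=(1+\delta_k)/\mu_{k+1}$ with $\delta_k$ of order $\sup_{i>k}\lambda_i/\mu_i$, and $\lambda_n/\mu_n\to0$ does not make $\sum_{k\geq n}\delta_k/\mu_{k+1}$ negligible at scale $\sqrt n/\mu_{n+1}$.) To be fair, the paper's own proof makes the identical leap silently: it applies Theorem \ref{Behavior_Tn}(ii), whose centering is $\Einf{T_n}$, and then writes the conclusion with the explicit centering. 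So you have located a real defect of the statement rather than introduced one; but your proposed repair does not close it. What your argument does prove is the CLT centered at $\Einf{T_n}$ (and the a.s.\ limits and the CLT for $X$, which only involve ratios or differences of the $\Einf{T_k}$ and are therefore unaffected); the explicit centering is legitimate only under additional hypotheses, e.g.\ $\mu_n=cn^\rho$ exactly (where Euler--Maclaurin does give an $O(1/\mu_{n+1})$ error) together with a quantitative decay such as $\sup_{k\geq n}\lambda_k/\mu_k=o(n^{-1/2})$.
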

 
\bi We recover the central limit theorem for the Kingman coalescent. We also mention that \cite{LT} provides Gaussian limits for more general $\Lambda$ coalescent processes whose ``Kingman part'' is non trivial.

\me
\begin{cor} 
\label{nrho}Assume \eqref{cas-par}. 
Then, 
$$
\lim_{n\rightarrow \infty} {T_n\,(\rho-1)n^{\rho-1}\log^\gamma n}=1 \qquad \p_\infty-\text{a.s}.$$
and
$$\frac{\sqrt{2\rho - 1}}{\sqrt{n}}\Big(T_n-\frac{1}{ (\rho -1) n^{\rho-1}\log^\gamma n}\Big)\overset{\textrm{(d)}}{\linf{n}} {\cal N}$$
where ${\cal N}$ follows a standard normal distribution.
\end{cor}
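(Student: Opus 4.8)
The plan is to derive Corollary \ref{nrho} as a direct specialization of Theorem \ref{corVR} to the concrete rates in \eqref{cas-par}, namely $\lambda_n\leq Cn$ and $\mu_n=n^\rho\log^\gamma n$ with $\rho>1$ and $\gamma\in\R$. First I would verify the two hypotheses of Theorem \ref{corVR}. The condition $\lim_{n\to+\infty}\lambda_n/\mu_n=0$ is immediate, since $\lambda_n/\mu_n\leq Cn/(n^\rho\log^\gamma n)=C/(n^{\rho-1}\log^\gamma n)\to0$ as $\rho>1$. Next I would check that $(\mu_n)_n$ varies regularly with index $\rho$: for any $a>0$, $\mu_{[an]}/\mu_n=([an]/n)^\rho\,(\log[an]/\log n)^\gamma\to a^\rho\cdot1=a^\rho$, because the logarithmic factor is slowly varying. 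Both hypotheses of Theorem \ref{corVR} therefore hold.

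Once the hypotheses are in place, the almost sure equivalent and the central limit theorem follow by substituting the explicit form of $\mu_{n+1}$ into the general statements. For the a.s. part, \eqref{CVps-VR} gives $\mu_{n+1}(\rho-1)T_n/n\to1$ a.s. I would simply rewrite $\mu_{n+1}=(n+1)^\rho\log^\gamma(n+1)$ and use $(n+1)^\rho\sim n^\rho$ and $\log^\gamma(n+1)\sim\log^\gamma n$ to replace $\mu_{n+1}/n$ by $n^{\rho-1}\log^\gamma n$ up to a factor tending to $1$, yielding $(\rho-1)n^{\rho-1}\log^\gamma n\,T_n\to1$ a.s. For the central limit theorem, Theorem \ref{corVR} gives $\sqrt{(2\rho-1)/n}\big(T_n-n/(\mu_{n+1}(\rho-1))\big)\to\mathcal N$ in law, and again substituting $\mu_{n+1}=(n+1)^\rho\log^\gamma(n+1)$ and noting $n/\mu_{n+1}\sim 1/((\rho-1)n^{\rho-1}\log^\gamma n)\cdot(\rho-1)^{-1}$ handled inside the centering term produces the stated centering $1/((\rho-1)n^{\rho-1}\log^\gamma n)$.

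The one point requiring genuine care is the centering term in the central limit theorem. Replacing $n/(\mu_{n+1}(\rho-1))$ by $1/((\rho-1)n^{\rho-1}\log^\gamma n)$ introduces an error that is multiplied by the normalizing factor $\sqrt{(2\rho-1)/n}$, so I must confirm this error is negligible on the CLT scale and not merely asymptotically equivalent. Writing $\mu_{n+1}=(n+1)^\rho\log^\gamma(n+1)$, the difference between $n/(\mu_{n+1}(\rho-1))$ and $1/((\rho-1)n^{\rho-1}\log^\gamma n)$ is of relative order $O(1/n)$, hence of absolute order $O(n^{-\rho}\log^{-\gamma}n)$; after multiplication by $\sqrt{n}^{\,-1}$ it is $O(n^{-\rho-1/2}\log^{-\gamma}n)$, which indeed vanishes since $\rho>1$. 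Thus the substitution is legitimate and Slutsky's theorem lets me transfer the limit. The final statement on $X(t)$ then follows from the last assertion of Theorem \ref{corVR} once the additional summability hypothesis $\sum_{n\geq1}n^{-1}\lambda_n/\mu_n<\infty$ is checked; but under \eqref{cas-par} this series is dominated by $\sum_n n^{-1}\cdot C/(n^{\rho-1}\log^\gamma n)=C\sum_n n^{-\rho}\log^{-\gamma}n<\infty$ as $\rho>1$, so the full conclusion of the corollary is obtained.
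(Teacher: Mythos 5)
Your proposal is correct and follows essentially the same route as the paper: both specialize Theorem \ref{corVR} by checking its two hypotheses under \eqref{cas-par} and then substituting the explicit equivalents $\mu_{n+1}\sim n^{\rho}\log^{\gamma}n$, $\Einf{T_n}\sim 1/((\rho-1)n^{\rho-1}\log^{\gamma}n)$ and $\Vinf{T_n}\sim 1/((2\rho-1)n^{2\rho-1}\log^{2\gamma}n)$ into its conclusions. Your explicit verification that replacing the centering $n/(\mu_{n+1}(\rho-1))$ by $1/((\rho-1)n^{\rho-1}\log^{\gamma}n)$ produces an error negligible at the CLT scale is exactly the point the paper hides behind ``easily checked,'' so it is a refinement of, not a departure from, the paper's argument.
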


\bi
\begin{proof}[Proof of Theorem \ref{corVR}] 
Let us first  remark that since  $(\mu_n)_n$ varies regularly with index $\rho>1$, then  $(1/\mu_n)_n$ (resp. $(1/\mu_n^2)_n$) varies regularly with index $-\rho<-1$ (resp. $-2 \rho<-2$). Under the assumptions of Theorem \ref{corVR}, we  note that conditions \eqref{condtech} are satisfied. Indeed,   Theorem 1.5.3 in \cite{Bingham1989} shows that the sequence $(\mu_{n})$ is equivalent to a non-decreasing sequence and Lemma \ref{VR_restes_sommes} applies directly to $\sum1/\mu_n$. Lemma \ref{lemme_moments_tau_n} (ii) can be applied, so that
 \eqref{condition_extinction}, \eqref{cv_{S}},
 \eqref{condition_limite} and \eqref{condition_croissance} are satisfied and 
\be
\label{equivv}
\Esp_{n+1}(T_n^k)\equinf{n}\frac{k!}{\mu^k_{n+1}},\quad \hbox{ for }\ k=1,2,3.
\ee

\me 
Then by forthcoming Lemma \ref{VR_restes_sommes}, 
\begin{equation*}
\Esp_{\infty}(T_n) \equinf{n} \frac{n}{\mu_{n+1} (\rho -1)}. 
\end{equation*}

\noindent Thus  for $x>1$,
\begin{equation*}
 \label{condinv}
\limsup_{n\rightarrow \infty} \frac{\Esp_{\infty}(T_{[nx]})}{\Esp_{\infty}(T_{n})}=x^{1-\rho}.
\end{equation*}
Since $1-\rho<0$, then $x^{1-\rho}<1$. Moreover,
$\sum_{n}\left( \Esp_{n+1}(T_n)/\Esp_{\infty}(T_n)\right)^2$ converges and the assumptions for
 Theorem \ref{SLGN} and Theorem \ref{Comportement_X_variation_reguliere} are satisfied, implying \eqref{CVps-VR}. 

 \smallskip \noindent  To prove the C.L.T. for $(T_n)$, we use  again \eqref{equivv} for $k=1,2$ to get
 $\Var(\tau_n)\sim 1/\mu_{n+1}^2$ as $n\to+\infty$, which implies that
  $(\Var(\tau_n))_n$ varies regularly with index $-2\rho$.  Then   $\Var_{{\infty}}(T_n)=\sum_{i=n+1}^{\infty} \Var(\tau_n)$ and forthcoming Lemma \ref{VR_restes_sommes} ensures that $\Vinf{T_n}$ varies regularly with index $1-2\rho$ and 
 \be
 \label{varTn}
 \Vinf{T_n} \sim \frac{n}{(2\rho-1)\mu_{n+1}^2}.
 \ee
 Therefore we have
 $\,\frac{\Var(\tau_n)}{\Var_\infty(T_n)}\equinf{n}\frac{2\rho-1}{n}$,
 which entails \eqref{hypothese_variances}. \\
\noindent
Moreover, by the triangle  inequality and the binomial theorem, we have
$$\Esp_{k+1}(\left|T_{k}-\Esp_{k+1}(T_k)\right|^3)\leq\Esp_{k+1}(T_k^3)+3\Esp_{k+1}(T_k)\Esp_{k+1}(T_k^2)+4\Esp_{k+1}(T_k)^3.$$
Thanks to \eqref{equivv}, all the terms of the r.h.s. are of order of magnitude $1/\mu_{n+1}^3$ as $n\to+\infty$.
 Thus, using  again Lemma \ref{VR_restes_sommes} and \eqref{varTn}, there is a positive constant $C'$ such that
 $$ \frac{\sum_{k\geq n}\Esp_{k+1}(\left|T_{k}-\Esp_{k+1}(T_k)\right|^3)}{\Vinf{T_n}^{3/2}}\leq \frac{C'}{ \sqrt{n}}.$$
This latter vanishes as $n\to+\infty$ and  \eqref{hypothese_moment_ordre3} is satisfied. 
Hence we apply Theorem \ref{Behavior_Tn} and 
\begin{equation*}
\frac{\sqrt{2\rho - 1}}{\sqrt{n}}\Big(T_n-\frac{n}{\mu_{n+1} (\rho -1)}\Big)\underset{n\to+\infty}{\overset{\mathrm{(d)}}\longrightarrow} {\cal N},
\end{equation*}
where ${\cal N}$ follows a standard normal distribution.

\me Let us now prove the last assertion of Theorem \ref{corVR}. 
To apply Proposition  \ref{corTLC}, we need to prove that
\begin{equation}\label{eq:1747}
\frac{t-\Einf{T_{s(x,t)}}}{\sqrt{\Vinf{T_{s(x,t)}}}}\underset{t\to0}\longrightarrow x\sqrt{2\rho-1}
\end{equation}
and we  first consider the case $x>0$. We note that
\begin{equation}\label{encadrement_1515}
\Einf{T_{v(t)}}-\Einf{T_{s(x,t)}}\leq t-\Einf{T_{s(x,t)}}\leq \Einf{T_{v(t)-1}}-\Einf{T_{s(x,t)}},
\end{equation}
and  we handle the two sides similarly. For the left hand side, we have
\begin{equation}\label{eq_1729}
\Einf{T_n}-\Einf{T_{[n+x\sqrt n]}}=\sum_{k=n}^{[ n+x\sqrt n]-1}\Esp_{k+1}(T_k)\equinf{n}\,\frac{x\sqrt{n}}{\mu_{n+1}},
\end{equation}
using forthcoming Lemma \ref{tek}  with $u_n= [ n+x\sqrt n]$. 
Moreover, applying forthcoming Lemma \ref{lemme_equivalents} with $f(y)=y$, $g(y)=[y+u\sqrt y]$ and $h(n)=\Vinf{T_n}$ and using \eqref{varTn}, we get
$$\Vinf{T_{[ n+x\sqrt n]}} \equinf{n} \Vinf{T_n}\equinf{n}  \frac{n}{(2\rho -1) \mu_{n+1}^2}.$$
Combining this equivalence with \eqref{eq_1729} and \eqref{encadrement_1515} yields 
\eqref{eq:1747} for $x>0$, while the case $x<0$ can be handled similarly. It ends up the proof.
\end{proof}


\begin{proof}[Proof of Corollary \ref{nrho}] We easily remark that  
 $$\Esp_{n+1}(T_n) \equinf{n} \frac{1}{n^{\rho}\log^{\gamma} n}\quad  ;
 \quad \Einf{T_n}\equinf{n}\sum_{k\geq n+1}\frac{1}{k^\rho\log^\gamma k}\equinf{n}
\frac{1}{(\rho-1)n^{\rho-1}\log^\gamma n}$$
and 
$$\Vinf{T_n}\equinf{n} \frac{1}{\big((2\rho-1)n^{2\rho-1}\log^{2\gamma} n\big)}.$$ 

\me The assumptions of Theorem \ref{corVR} can then be easily checked to get the result.
\end{proof}

\appendix
\section{Appendix : regularly varying functions} 
 The proofs  of the previous section rely on the  following technical results on regularly varying functions.

\me
\begin{lem}\label{VR_restes_sommes}
 Let $g$ be a function that varies regularly at $+\infty$ with index $\rho'<-1$. Then the series $\sum_{k\geq 0} g(k)$ converges and $R(n) = \sum_{k\geq n}g(k)$ varies regularly with index $\rho'+1$ and
$$\sum_{k\geq n}g(k) \equinf{n} - \frac{ng(n)}{\rho'+1}.$$
\end{lem}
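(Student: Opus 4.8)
The plan is to reduce everything to Karamata's theorem for integrals of regularly varying functions, combined with an elementary sum--integral comparison. First I would dispose of the convergence of $\sum_{k\geq 0}g(k)$. Writing $g(x)=x^{\rho'}L(x)$ with $L$ slowly varying and invoking Potter's bound (equivalently, the representation theorem for slowly varying functions), one gets $g(k)\leq C\,k^{\rho'+\eps}$ for $k$ large, where $\eps>0$ is chosen so small that $\rho'+\eps<-1$. Comparison with a convergent $p$-series then gives convergence of the series.

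Second, I would remove the possible lack of monotonicity of $g$. Since $g$ varies regularly with negative index, Theorem 1.5.3 in \cite{Bingham1989} (already used in the proof of Theorem \ref{corVR}) produces a non-increasing function $\bar g$ with $g(x)\sim\bar g(x)$ as $x\to+\infty$. Because $g\sim\bar g$ and the tails are summable, an $\eps$-squeeze ($(1-\eps)\bar g(k)\leq g(k)\leq(1+\eps)\bar g(k)$ eventually, then summed) shows $\sum_{k\geq n}g(k)\sim\sum_{k\geq n}\bar g(k)$, and likewise $g(n)\sim\bar g(n)$. Hence we may assume without loss of generality that $g$ itself is non-increasing.

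Third, for a non-increasing $g$ the bracketing
$$\int_n^\infty g(t)\,\dif t\ \leq\ \sum_{k\geq n}g(k)\ \leq\ g(n)+\int_n^\infty g(t)\,\dif t$$
holds. Karamata's theorem (see \cite{Bingham1989}) applies to $\int_n^\infty g(t)\,\dif t$ since $\rho'<-1$, yielding that this integral varies regularly with index $\rho'+1$ and that $\int_n^\infty g(t)\,\dif t\sim -\,n g(n)/(\rho'+1)$ as $n\to+\infty$. As $\rho'+1<0$, the right-hand side forces $g(n)\big/\!\int_n^\infty g(t)\,\dif t\to 0$, so the additive term $g(n)$ is negligible and both sides of the bracketing are asymptotic to the integral. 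Combining, $R(n)=\sum_{k\geq n}g(k)\sim -\,n g(n)/(\rho'+1)$; since $n g(n)=n^{\rho'+1}L(n)$ varies regularly with index $\rho'+1$, so does $R$.

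The step I expect to be the most delicate is the passage to a monotone equivalent together with the claim that tail sums are preserved under $\sim$: asymptotic equivalence of summands does not survive summation in general, and here it does only because the summands are eventually of one sign and the series converges. Everything else — the convergence estimate, the bracketing, and the invocation of Karamata — is routine.
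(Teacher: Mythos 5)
Your proof is correct and follows essentially the same route as the paper's: reduce to a non-increasing $g$ via Theorem 1.5.3 of \cite{Bingham1989}, bracket the tail sum $R(n)$ between $\int_n^\infty g(t)\,\dif t$ and $g(n)+\int_n^\infty g(t)\,\dif t$, and conclude with Karamata's theorem (Theorem 1.5.11 there). The only difference is that you spell out two steps the paper leaves implicit, namely the Potter-bound argument for convergence of the series and the verification that tail sums and the quantity $ng(n)$ are preserved under passage to the monotone equivalent, which is welcome detail but not a different approach.
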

\begin{proof}
  First, since $\rho'<-1$,  $\sum_{k\geq0}g(k)$ and $\int_0^{+\infty}g(x)\dif x$ are both convergent. Moreover,
  thanks to \cite[Thm 1.5.3]{Bingham1989}, a regularly varying function with negative index is equivalent to a non-increasing function. Then, without loss of generality, one can suppose that $g$ is non-increasing. Then, if $I_n:=\int_n^{+\infty}g(x)\dif x$, a classical comparison between series and integrals entails that
$1\leq \frac{ R_n}{I_n}\leq1 + \frac{g(n)}{I_n}.$
Using that $g$ varies regularly and according to \cite[Thm 1.5.11]{Bingham1989},
 \begin{equation}\label{eq:1536}
  \lim_{n\to+\infty}\frac{ng(n)}{I_n}= - (  \rho' +1).
  \end{equation}
Hence,  $I_n\sim R_n$ as $n\to+\infty$.
We also see from \eqref{eq:1536} that $I$ varies regularly at $+\infty$ with index $\rho'+1$. Since $I$ and $R$ are equivalent, $R$ also varies regularly with the same index.
\end{proof}

\me
\begin{lem}\label{lemme_equivalents}
Let $x_0\in[0,+\infty]$ and let $f$ and $g$ be two positive functions such that
$$f(x) \underset{x\to x_0}\longrightarrow L\in\{0,+\infty\}, \qquad \frac{f(x)}{g(x)} \underset{x\to x_0}\longrightarrow 1.$$
If $h$ varies regularly  at $L$, then
$$\frac{h(f(x))}{h(g(x))}\underset{x\to x_0}\longrightarrow 1.$$
Moreover, if $f(x)=f(x,t)=g(x)(1+t\eps(x))$ with $\lim_{x\to x_0}\eps(x)=0$, the previous convergence holds uniformly in $t$ in any compact subset of $\R$.
\end{lem}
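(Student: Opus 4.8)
The plan is to reduce everything to the Uniform Convergence Theorem for regularly varying functions (see \cite[Thm.~1.5.2]{Bingham1989}): if $h$ varies regularly at $L\in\{0,+\infty\}$ with some index $\rho$, then $h(\lambda y)/h(y)\to \lambda^{\rho}$ as $y\to L$, \emph{uniformly} for $\lambda$ in each compact subset of $(0,+\infty)$ (the case $L=0$ being obtained from the case $L=+\infty$ by the substitution $y\mapsto 1/y$). First I would observe that the two hypotheses $f(x)\to L$ and $f(x)/g(x)\to 1$ force $g(x)\to L$ as well, since $g=f\cdot(g/f)$. Setting $\lambda(x):=f(x)/g(x)$, we then have $\lambda(x)\to 1$ as $x\to x_0$, and we may rewrite
$$\frac{h(f(x))}{h(g(x))}=\frac{h(\lambda(x)\,g(x))}{h(g(x))}.$$

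Next I would bound this ratio using the uniform convergence. Since $\lambda(x)\to1$, for $x$ close enough to $x_0$ we have $\lambda(x)\in[1/2,2]$, a fixed compact subset of $(0,+\infty)$. The triangle inequality gives
$$\left|\frac{h(\lambda(x)g(x))}{h(g(x))}-1\right|\leq \sup_{\lambda\in[1/2,2]}\left|\frac{h(\lambda g(x))}{h(g(x))}-\lambda^{\rho}\right|+\left|\lambda(x)^{\rho}-1\right|.$$
As $x\to x_0$ we have $g(x)\to L$, so the first term tends to $0$ by the Uniform Convergence Theorem applied at $L$, while the second tends to $0$ because $\lambda(x)\to1$. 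This establishes the first assertion.

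Finally, for the uniform statement I would take $f(x,t)=g(x)(1+t\eps(x))$, so that $\lambda(x,t)=1+t\eps(x)$. On a compact set $t\in[-M,M]$ we have $|t\eps(x)|\leq M|\eps(x)|\to0$, hence $\lambda(x,t)\in[1/2,2]$ for $x$ near $x_0$ \emph{uniformly} in $t$, and $\sup_{|t|\leq M}|\lambda(x,t)^{\rho}-1|\to0$ as $x\to x_0$. Re-running the same two-term bound, the supremum over $[1/2,2]$ is already independent of $t$, so both terms vanish uniformly in $t\in[-M,M]$, yielding the claimed uniform convergence. The only real subtlety is to invoke the uniform (in the multiplier) form of regular variation rather than its pointwise definition; once that is in hand, both parts of the lemma drop out of the single displayed bound, and in particular one never needs any comparison for $h(g(x))$ itself, only that the ratio stabilizes.
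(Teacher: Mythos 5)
Your proof is correct and takes essentially the same route as the paper: both arguments reduce the lemma to the uniform convergence theorem for regularly varying functions in Bingham--Goldie--Teugels, applied to the multiplier $\lambda(x)=f(x)/g(x)$ lying in a fixed compact neighbourhood of $1$, with the uniform-in-$t$ statement following because $1+t\eps(x)\to 1$ uniformly for $t$ in compacts. The only cosmetic difference is that you split the error into a uniform-convergence term plus $\left|\lambda(x)^{\rho}-1\right|$, whereas the paper absorbs both into a single $\eps$-bound by choosing the compact set $[1-\eta,1+\eta]$ small.
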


\begin{proof}
We only prove the case $L=0$ and fix $\eps>0$. Thanks to Theorem 1.5.1 p.22 in \cite{Bingham1989}, the convergence given above in the definition 
of regularly varying function can be taken uniform with respect to $a$ in some compact set.
Then, there exist $\eta,\eta' >0$ such that for every $a \in [1-\eta, 1+\eta]$ and $y \in (0,\eta')$,
$$1-\eps \leq \frac{h(ay)}{h(y)} \leq 1+\eps.$$
Furthermore, for $x$ close enough to $x_0$, we have $g(x)\leq \eta'$ and $(1-\eta)\leq f(x)/g(x) \leq (1+\eta)$,
so that $$\left|\frac{h\left(g(x)\cdot \frac{f(x)}{g(x)}\right)}{h(g(x))}-1\right|\leq\eps,$$ which ends up the first part of the proof.
The second part follows in the same way since $1+t\eps(x)$ goes to $1$ uniformly in $t$ in any compact set.
\end{proof}

\begin{lem}\label{tek}%
Let $(m_n)_n$ be a regularly varying sequence and $(u_n)_n$ a sequence of integers such that $u_n\rightarrow \infty$  and $u_n/n \rightarrow 0$ as $n\rightarrow \infty$. Then
$$\sum_{k=n}^{n+u_n-1} m_k \equinf{n} u_nm_n.$$
\end{lem}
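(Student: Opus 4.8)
The plan is to reduce the statement to the control of the ratios $m_k/m_n$ across the summation window. Factoring out $m_n$, one writes
$$\sum_{k=n}^{n+u_n-1} m_k = u_n\, m_n \cdot \frac{1}{u_n}\sum_{k=n}^{n+u_n-1} \frac{m_k}{m_n},$$
so it suffices to prove that the arithmetic mean $\frac{1}{u_n}\sum_{k=n}^{n+u_n-1} (m_k/m_n)$ tends to $1$. Since this mean differs from $1$ by at most $\sup_{n\le k\le n+u_n-1}\left|m_k/m_n-1\right|$, the whole claim follows once I show that this supremum vanishes as $n\to\infty$.

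First I would bring in the uniform convergence theorem for regular variation. Let $\rho$ denote the index of $(m_n)$; the associated function $t\mapsto m_{[t]}$ varies regularly with the same index, and by \cite[Thm~1.5.1]{Bingham1989} the convergence $m_{[an]}/m_n\to a^{\rho}$ holds uniformly for $a$ in the compact set $[1/2,2]$. For an integer $k$ with $n\le k\le n+u_n-1$ I set $a=k/n$; because $k$ is an integer one has $[an]=k$, whence
$$\left|\frac{m_k}{m_n}-\left(\frac{k}{n}\right)^{\rho}\right|\le \eta_n,\qquad \eta_n:=\sup_{a\in[1/2,2]}\left|\frac{m_{[an]}}{m_n}-a^{\rho}\right|\ \linf{n}\ 0.$$

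Second, the hypothesis $u_n/n\to0$ makes the window asymptotically negligible: for $n\le k\le n+u_n-1$ one has $1\le k/n\le 1+(u_n-1)/n$, and the right-hand side tends to $1$. Hence $(k/n)^{\rho}\to1$ uniformly over this range, and combining with the previous display gives $\sup_{n\le k\le n+u_n-1}\left|m_k/m_n-1\right|\to0$, which is exactly what is needed. (In the intended application $m_n>0$, so the ratios and the uniform estimates above are unambiguous.)

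The main obstacle is precisely this uniformity. The defining relation of regular variation only controls $m_{[an]}/m_n$ for each \emph{fixed} $a$, whereas here the number of summands $u_n$ grows without bound; a term-by-term application would therefore not suffice, and one genuinely needs the uniform convergence theorem to dominate all the ratios $m_k/m_n$ simultaneously on the shrinking window $k/n\in[1,1+(u_n-1)/n]$.
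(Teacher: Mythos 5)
Your proof is correct and follows essentially the same route as the paper: both reduce the claim to showing $\sup_{n\le k\le n+u_n-1}\left|m_k/m_n-1\right|\to 0$ over the shrinking window and obtain this from the uniform convergence theorem for regular variation. The only cosmetic difference is that the paper channels this uniformity through its Lemma~\ref{lemme_equivalents} (whose uniform-in-$t$ part is itself proved via Theorem 1.5.1 of \cite{Bingham1989}), whereas you invoke that theorem directly with $a=k/n$ and handle the factor $(k/n)^{\rho}\to1$ explicitly.
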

\begin{proof}We write
$$
\left|\sum_{k=n}^{n+u_n-1} m_k - u_nm_n\right| \leq m_n\sum_{k=n}^{n+u_n-1}\left|\frac{m_k}{m_n}-1\right|\\
\leq u_nm_n\sup_{t\in[0,1]}\left|\frac{m_{\lfloor n(1+tu_n/n)\rfloor}}{m_n}-1\right|.
$$
Using  the second part of  Lemma \ref{lemme_equivalents} with the regularly varying sequence
$(m_n)_n$, the last term vanishes, which ends up the proof.  
\end{proof}

\bi
\textbf{Acknowledgement.} 
This work  was partially funded by the Chaire Mod\'elisation Math\'ematique et Biodiversit\'e VEOLIA-\'Ecole Polytechnique-MNHN-F.X., by the labex LMH through the grant no ANR-11-LABX-0056-LMH in the "Programme des Investissements d'Avenir", by the professorial chair Jean Marjoulet and by  the project MANEGE `Mod\`eles
Al\'eatoires en \'Ecologie, G\'en\'etique et \'Evolution'
ANR-09-BLAN-0215.


\end{document}